\begin{document}
\newtheorem{teorema}{Theorem}[section]
\newtheorem{lemma}[teorema]{Lemma}
\newtheorem{utv}[teorema]{Proposition}
\newtheorem{svoistvo}[teorema]{Property}
\newtheorem{sled}[teorema]{Corollary}
\newtheorem{con}[teorema]{Conjecture}
\newtheorem{zam}[teorema]{Remark}
\newtheorem{const}[teorema]{Construction}
\newtheorem{quest}[teorema]{Question}
\newtheorem{problem}[teorema]{Problem}

\author{A. A. Taranenko\thanks{Sobolev Institute of Mathematics, Novosibirsk, Russia. 
 \texttt{taa@math.nsc.ru}}}
\title{On the K\"onig--Hall--Egerv\'ary theorem for multidimensional matrices and multipartite hypergraphs}
\date{December 15, 2020}

\maketitle

\begin{abstract}
One of possible interpretations of the well-known K\"onig--Hall--Egerv\'ary theorem is a full characterization of all bipartite graphs extremal for fractional matchings of a given weight (or, equivalently, a characterization of $(0,1)$-matrices extremal for partial fractional diagonals of a given length). In this paper we initiate the study of $d$-partite $d$-uniform hypergraphs  that are extremal for fractional perfect matchings (or, equivalently, $d$-dimensional $(0,1)$-matrices that are extremal for polydiagonals). For this purpose, we analyze similarities and differences between $2$-dimensional and multidimensional cases and put forward a series of questions and conjectures on properties of multidimensional extremal matrices (extremal hypergraphs).  We also prove these conjectures for several parameters and provide a number of supporting constructions and examples.
\end{abstract}

\section*{Introduction}

The K\"onig--Hall--Egerv\'ary theorem is one of the fundamental results in discrete mathematics.

\begin{teorema}[K\"onig--Hall--Egerv\'ary]
Let $A$ be a $(0,1)$-matrix of order $n$. The minimum number of lines (rows and columns) sufficient for covering all unity entries of $A$ is equal to the maximum number of ones that do not share the same rows and columns. 

or

A $(0,1)$-matrix $A$ of order $n$ contains a unity partial diagonal of length $l$ if and only if there are no $s \times t$ zero submatrices in $A$ for which $s+t = 2n - l + 1.$

or

For a bipartite graph, the minimum size of a vertex covering is equal to the maximum size of a matching.
\end{teorema}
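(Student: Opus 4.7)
The plan is to first reduce the three formulations to a single statement and then prove the bipartite min-max equality. Associate to the $n\times n$ matrix $A$ the bipartite graph $G_A$ with parts $R=\{r_1,\dots,r_n\}$ and $C=\{c_1,\dots,c_n\}$ and an edge $r_ic_j$ exactly when $A_{ij}=1$. Under this correspondence, partial unity diagonals in $A$ match bijectively with matchings in $G_A$, and line covers (sets of rows and columns covering all ones) match with vertex covers; the first and third formulations thereby coincide. For the Hall-type formulation, observe that a cover of $A$ using $s$ rows and $t$ columns exists if and only if the complementary $(n-s)\times(n-t)$ submatrix is all-zero, so the minimum cover has size $\le k$ iff there is a zero submatrix with $s+t \ge 2n-k$. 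Substituting $l=k$ and negating converts the min-max equality into the stated existence/non-existence statement about rectangular zero submatrices.

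Next I would prove $\mu(G)=\tau(G)$ for a bipartite graph $G=(R\cup C,E)$. The inequality $\mu(G)\le\tau(G)$ is immediate: any cover must hit each edge of a maximum matching, and these edges are vertex-disjoint. For the reverse inequality I would use the classical alternating-path argument of K\"onig. Fix a maximum matching $M$, let $U\subseteq R$ be the set of $M$-unsaturated vertices of $R$, and let $Z$ be the set of vertices reachable from $U$ by $M$-alternating paths. Maximality of $M$ (via Berge's lemma) guarantees that every $C$-vertex in $Z$ is $M$-saturated. Define $W=(R\setminus Z)\cup(C\cap Z)$. One verifies that $W$ is a vertex cover: if some edge $r_ic_j$ had $r_i\in R\cap Z$ and $c_j\in C\setminus Z$, then the alternating path reaching $r_i$ could be extended through $r_ic_j$, forcing $c_j\in Z$, a contradiction. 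Moreover $|W|=|M|$, since each vertex of $W$ is $M$-saturated and the construction ensures no two endpoints of the same matching edge both lie in $W$.

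The main obstacle, modest as it is, is the bookkeeping of the alternating-path argument: showing that every $C$-vertex in $Z$ is matched (otherwise the path to it is $M$-augmenting, contradicting maximality), that $W$ is genuinely a cover, and that the mate of each vertex of $W$ lies outside $W$ so that $|W|=|M|$. The other delicate point is the translation between matrix and graph language for ``zero submatrix'' versus ``line cover'', since the multidimensional analogues developed later in the paper depend on this dictionary; any ambiguity in counting $s+t$ versus $2n-s-t$ would propagate. With these two items handled, all three formulations follow from a single instance of the K\"onig duality.
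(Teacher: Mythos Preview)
Your argument is correct: the reduction of the three formulations to one another is accurate, and the alternating-path construction of a vertex cover of size $|M|$ is the standard K\"onig proof, with the verifications (every $C$-vertex in $Z$ is matched, $W$ covers all edges, no matching edge has both ends in $W$) handled properly.

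However, the paper takes a genuinely different route. Rather than the combinatorial alternating-path argument, the paper proves the result via linear programming and total unimodularity (Theorem~\ref{2dimkonig}): the maximum partial diagonal and the minimum line cover are the integer versions of the LP problems~(\ref{maxplexpr}) and~(\ref{mincoverpr}); LP duality gives equality of the fractional optima; and since the constraint matrix is the incidence matrix of a bipartite graph, Proposition~\ref{unimod} shows it is totally unimodular, so the LP polyhedra have integer vertices and the fractional and integer optima coincide. Your proof is more self-contained and elementary, needing no LP machinery. The paper's proof, by contrast, is chosen precisely because it sets up the framework used throughout the rest of the paper: the multidimensional generalizations rely on the LP duality between polyplexes and hyperplane covers (Theorems~\ref{multikonig} and~\ref{slackuse}), and the $2$-dimensional case is presented as the instance where total unimodularity forces integrality, a property that fails in higher dimensions. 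So while your proof establishes the theorem cleanly, the paper's approach buys the conceptual bridge to the multidimensional questions that follow.
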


A variety of proof strategies are known for this theorem. One of them (see, for example, book~\cite{schrijver.combopt} by Schrijver) is based on linear programming, fractional matchings, and fractional vertex covers.   The K\"onig--Hall--Egerv\'ary theorem may be also expressed as a complete characterization of matrices (or graphs) extremal in some sense.  

To illustrate this, we define a \textit{polyplex} $K$ of order $n$ and of weight $W$ to be a nonnegative matrix order $n$ in which the sum of entries over each row and each column is not greater than $1$ and the sum of all entries is equal to $W$. In other words, a polyplex  is a fractional matching in a balanced bipartite graph. 
We call a polyplex of the maximum possible weight (i.e. weight $n$) to be a \textit{polydiagonal}. 

We will say that a $(0,1)$-matrix $A$ is extremal for polyplexes of weight $W$ if there are no polyplexes $K$ of weight $W$ or greater with $supp (K) \subseteq supp (A)$ but after adding any entry to the support $A$ there is a polyplex of weight at least $W$ within the support of the resulting matrix.  
Then one more equivalent form of the K\"onig--Hall--Egerv\'ary theorem is the following: all $(0,1)$-matrices extremal for polyplexes of weight $W$ are exactly the matrices in which zero entries form an $s \times t$ submatrix with $s + t = 2n - \left\lceil W \right\rceil + 1$.

For multidimensional matrices, we introduce a  polyplex so that it is equivalent to a fractional matching in a $d$-partite $d$-uniform hypergraph with parts of equal sizes (such hypergraphs are often called balanced).
The main aim of the present paper is, using a geometric interpretation and theory of linear programming, to take several steps towards the generalization of K\"onig--Hall--Egerv\'ary theorem for a multidimensional case and investigate multidimensional matrices extremal for polyplexes. In other words, we research balanced $d$-partite $d$-uniform hypergraphs extremal for fractional matchings of a given weight.

There are plenty of results on fractional matchings concerning sufficient conditions for a hypergraph to contain a fractional matching of a given weight.  Some of them look for the minimum degree for the existence of perfect matchings. For example, with the help of a geometric approach Keevash and Mycroft~\cite{KeevMyc.geommatch} obtain asymptotic degree conditions for the existence of perfect matchings that generalizes many other results. Moreover, in this book it can be found a number of references on papers concerning minimum degree problems for matchings in hypergraphs. For the survey on these and related problems, see also~\cite{RodlRus.hyperDirac} by R\"odl and Ruci\'nski.

For a balanced $d$-partite $d$-uniform hypergraph, the degree condition sufficient for the existence of a perfect matching was proved in~\cite{AhaGeoSpr.pminhyper}.
The fractional matching polytope of a hypergraph is studied, for instance, in~\cite{FurKahnSeym.fracmatch}. At last, the recent progress on the connection between the number of edges and the matching number of a hypergraph can be found in~\cite{HuLohSud.matchhyp}.

One more possible generalization of the K\"onig--Hall--Egerv\'ary theorem is the following conjecture often attributed to Ryser (see~\cite{BestWan.Rysconj} for more on the history of this conjecture): in a $d$-partite $d$-uniform hypergraph the size of the minimum vertex cover is not greater than the size of the maximum matching multiplied by $d-1$. The best progress is that this conjecture is true for $3$-partite hypergraphs~\cite{aharoni.rysconj}; extremal $3$-partite hypergraphs for the conjecture were described in~\cite{HaxNarSza.Ryserconj}. For $d = 4$ and $d=5$, an approximate version of the conjecture was proved by Haxell and Scott~\cite{HaxScott.Rysconj}.

Finally, there are generalizations of the Hall theorem of distinct representatives in literature. One of them is stated for hypergraphs by Aharoni and Haxell~\cite{AhaHax.Hallth}, and a vector version of the Hall theorem for common transversals is proved by Woodall~\cite{woodall.vecttrans}.

Let us describe the structure of this paper. 
In Section 1 we introduce the main concepts including definitions on not only multidimensional polyplexes and extremal matrices but (fractional) hyperplane covers.  When a polyplex corresponds to a fractional matching in a $d$-partite hypergraph, a hyperplane cover of a multidimensional matrix is exactly a fractional vertex cover of the same hypergraph. 

In Section 2 we recall a duality between problems on the maximum fractional matching and the minimum fractional vertex hyperplane cover in a hypergraph and state the same duality for polyplexes and hyperplane covers. Also, we deduce some easy consequences of the linear programming theory applied to polyplexes and hyperplane covers.

In Section 3 we probe our approach on the case of $2$-dimensional matrices providing one of the proofs of the K\"onig--Hall--Egerv\'ary theorem.  We also find the differences between the $2$-dimensional and multidimensional cases and reveal some properties of $2$-dimensional extremal matrices. In the second part of this section, we formulate a research programme for multidimensional extremal matrices, raise the main questions, and propose several conjectures. In particular, one of these conjectures is equivalent to that every $d$-partite hypergraph extremal for a fractional perfect matching is extremal for an integer perfect matching too.  If we succeed in completing the above programme, we obtain a characterization of extremal matrices for polyplexes that is equivalent to the multidimensional K\"onig--Hall--Egerv\'ary theorem. 

Following our research programme, in Section 4 we consider a question on the equivalence between extremal matrices and their optimal hyperplane covers and in Section 5 we provide some properties of optimal hyperplane covers. A series of constructions of extremal matrices is given in Section 6, and Section~7 is devoted to the property of diagonal extremality.  In Section 8 we look closer at extremal matrices of orders $2$ and $3$ and prove some additional properties for them. The paper concludes with a list of examples of extremal matrices of small orders and dimensions.

The main results of the present consideration are characterizations of all extremal matrices of big deficiencies and extremal matrices with two-value optimal hyperplane covers. In particular, all main conjectures from Section 3 are confirmed for such classes of matrices.

\section{Definitions}

A \textit{$d$-dimensional matrix $A$ of order $n$} is an array $(a_\alpha)_{\alpha \in I^d_n}$, $a_\alpha \in\mathbb R$, with the set of indices $I_n^d= \left\{ (\alpha_1, \ldots , \alpha_d):\alpha_i \in \left\{1,\ldots,n \right\}\right\}$. For $x \in \mathbb{R}$, we will say that $a_{\alpha}$ is an \textit{$x$-entry} of a matrix $A$ if $a_{\alpha} = x$.
A multidimensional matrix $A$ is called \textit{nonnegative} if all entries of $A$ are nonnegative.  A \textit{$(0,1)$-matrix} is a multidimensional matrix whose all entries are equal to zero or one. The \textit{support} $supp(A)$ of a nonnegative matrix $A$ is the set of indices $\alpha$ of all nonzero entries $a_\alpha$.

 A \textit{hyperplane} in a $d$-dimensional matrix $A$ is a $(d-1)$-dimensional submatrix of $A$ obtained by fixing one of coordinates. Denote by $\Gamma_{i,j}$ the $j$-th hyperplane of direction $i$ in the matrix $A$ (submatrix in which the $i$-th coordinate is assigned to be $j$). Given a $d$-dimensional matrix $A$ of order $n$, let $A_\alpha$ be the $d$-dimensional submatrix of order $n-1$ obtained by deleting all hyperplanes containing index $\alpha$. 

Multidimensional matrices $A$ and $A'$ are said to be \textit{equivalent} if one can be turned to the other by permutations of hyperplanes of each direction and by permutations of directions (matrix transpositions).

A \textit{polyplex} $K$ of \textit{weight} $W$ is a nonnegative multidimensional matrix in which the sum of entries over each hyperplane is not greater than $1$ and the sum of all entries equals $W$. Note that the set of all polyplexes of the same weight is a convex polytope.   

A polyplex of weight $n$ and order $n$ is said to be a \textit{polydiagonal}.
The simplest example of a polydiagonal is a \textit{diagonal} that is a $(0,1)$-matrix with exactly one unity entry in each hyperplane. 

Strictly speaking,  it would be more correct to use terms like ``partial fractional diagonal'' instead of ``polyplex''  and ``fractional diagonal'' instead of ``polydiagonal''. But we will use the present notions for the sake of brevity and convenience.  Plexes appear in studies of latin squares and latin hypercubes where they mean sets of entries evenly distributed among all hyperplanes and symbols of a latin hypercube.  See survey~\cite{wanless.surv} for results on plexes in latin squares and paper~\cite{my.iter} for a generalization of plexes for multidimensional matrices. 

We will say that  a multidimensional \textit{$(0,1)$-matrix $A$ contains a polyplex $K$} (or that \textit{$K$ is polyplex in $A$}) if $A$ and $K$ have the same order and dimension and  $supp (K)\subseteq supp(A)$.  An \textit{optimal} polyplex in a matrix $A$ is a polyplex of the maximum weight contained in the matrix $A$.

A \textit{hyperplane cover} of a $d$-dimensional $(0,1)$-matrix $A$ of order $n$ is a  $(d \times n)$-table $\Lambda = (\lambda_{i,j})$ assigning  nonnegative weights to all hyperplanes of $A$ in such a way so each unity entry of $A$ \textit{is covered with weight} not less than 1, i.e. we demand that  for each $\alpha \in supp (A)$ it holds $\sum\limits_{\Gamma_{i,j} \ni \alpha}  \lambda_{i,j} \geq 1.$ 

The \textit{weight} of a hyperplane cover $\Lambda$ is the sum of all its entries $\lambda_{i,j}$.
It is not hard to see that the set of all hyperplane covers of a given weight is a convex polytope.

We call a hyperplane cover $\Lambda$ of a $(0,1)$-matrix $A$ \textit{optimal} if it has the minimum weight among all hyperplane covers of $A$. Note that every $(0,1)$-matrix $A$ of order $n$ has a hyperplane cover of weight $n$ because we can always cover the support of  $A$ by $n$ hyperplanes of the same direction having weight 1. Therefore, the weight of an optimal hyperplane cover of any matrix of order $n$ is not greater than $n$.

Given a nonnegative table $\Lambda$, we define a $(0,1)$-matrix $A(\Lambda)$ so that the support of $A(\Lambda)$ is the set of indices covered with weight at least $1$ by $\Lambda$. Also for a given index $\alpha$ let us denote by $\Lambda_\alpha$ the $d \times (n-1)$-table obtained from $\Lambda$ by deleting all weights of  hyperplanes containing the index $\alpha$.

A multidimensional  $(0,1)$-matrix $A$ of order $n$ is called \textit{stepped} if for every direction $i$ of hyperplanes and for all $1 \leq j_1 < j_2 \leq n$ the support of the hyperplane $\Gamma_{i,j_1}$ includes the support of the hyperplane $ \Gamma_{i,j_2}$.
Note that for every hyperplane cover $\Lambda$ the matrix $A(\Lambda)$ is equivalent to some stepped matrix $B$ since at each direction we can arrange the weights of hyperplanes  in descending order.  On the other hand,  not for every stepped matrix $B$ there is a hyperplane cover $\Lambda$ such that $B = A(\Lambda)$.
Multidimensional stepped matrices of order $2$ are known as monotonic boolean functions;  $(0,1)$-matrices in echelon form serve as an example of $2$-dimensional stepped matrices.

A $d$-dimensional  $(0,1)$-matrix $A$ of order $n$ is called \textit{extremal} if $A$ contains no polydiagonals and after adding any entry to the support of $A$ the resulting matrix contains a polydiagonal. The \textit{deficiency} $\delta$ of an extremal matrix $A$ of order $n$ is the difference between $n$ and the weight of the optimal polyplex in $A$.

At last we introduce a special class of extremal matrices. We will say that a multidimensional $(0,1)$-matrix $A$ is \textit{diagonally extremal} if it contains no polydiagonals and for each index $\alpha$ such that $a_\alpha = 0$ the submatrix $A_{\alpha}$ contains a diagonal.

\section{Linear programming approach}

The problem on the maximum weight of a polyplex $K$ contained in a given $d$-di\-men\-sio\-nal $(0,1)$-matrix $A$ of order $n$ can be stated as the following linear programming problem: 
\begin{gather} \label{maxplexpr}
\sum\limits_{\alpha} k_\alpha \rightarrow \max; \notag\\
\sum \limits_{\alpha \in \Gamma_{i,j}}  k_{\alpha} \leq 1 \mbox{ for all } i = 1, \ldots, d, ~ j = 1, \ldots, n; \\
k_{\alpha} \geq 0  \mbox{ for all } \alpha \in supp (A); ~~~ k_{\alpha} = 0  \mbox{ for all } \alpha \not\in supp (A).
\notag
\end{gather}

Since all constraints use only integers, each multidimensional $(0,1)$-matrix has an optimal polyplex with rational entries.  It is well known that there exists a polynomial algorithm depending on the number of variables and on the size of constraint parameters to solve a linear programming problem. Thus, an optimal polyplex for any multidimensional matrix can be found in polynomial time.

For a given $d$-dimensional $(0,1)$-matrix $A$ of order $n$, a problem on an optimal  hyperplane cover $\Lambda$ looks as follows:
\begin{gather} \label{mincoverpr}
\sum\limits_{i=1}^d \sum\limits_{j=1}^n \lambda_{i,j} \rightarrow \min; \notag\\
\sum \limits_{\Gamma_{i,j} \ni \alpha}  \lambda_{i,j} \geq 1 \mbox{ for all } \alpha \in supp (A); \\
\lambda_{i,j} \geq 0 \mbox{ for all } i = 1, \ldots, d, ~ j = 1, \ldots, n.
\notag
\end{gather}  
Similarly to polyplexes,  for each multidimensional matrix $A$ there exists an optimal hyperplane cover $\Lambda$ having all rational entries and it can be found in polynomial time.

Problem~(1) is equivalent to a problem on the maximum weight of a fractional perfect matching in a balanced $d$-partite $d$-uniform hypergraph, and Problem~(2) aims for a vertex cover of minimum weight in such hypergraphs. The constraint matrices $U$ and $U^{T}$ for both problems are submatrices of the vertex-hyperedge incidence matrix of the complete $d$-partite $d$-uniform hypergraph with parts of size $n$.

As for fractional matchings and fractional vertex covers in general hypergraphs, we have the following.

\begin{teorema} \label{multikonig}
Given a $d$-dimensional $(0,1)$-matrix $A$ of order $n$, the maximum weight of a polyplex contained in $A$ is equal to the minimum weight of its hyperplane cover.
\end{teorema}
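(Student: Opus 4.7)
The plan is to recognize that the two linear programs~(1) and~(2) are duals of one another and apply strong LP duality. Writing~(1) in matrix form, the constraint matrix $U$ has rows indexed by the pairs $(i,j)$ naming the hyperplanes $\Gamma_{i,j}$, columns indexed by $\alpha \in \mathrm{supp}(A)$, and an entry equal to $1$ precisely when $\alpha \in \Gamma_{i,j}$. The primal is $\max \mathbf{1}^T k$ subject to $Uk \leq \mathbf{1}$, $k \geq 0$. Forming the dual in the standard way yields $\min \mathbf{1}^T \lambda$ subject to $U^T \lambda \geq \mathbf{1}$, $\lambda \geq 0$, which, coordinate by coordinate, is exactly $\sum_{\Gamma_{i,j} \ni \alpha} \lambda_{i,j} \geq 1$ for every $\alpha \in \mathrm{supp}(A)$ with $\lambda_{i,j} \geq 0$ — precisely problem~(2). (Entries $k_\alpha$ with $\alpha \notin \mathrm{supp}(A)$ are fixed to zero in the primal and accordingly produce no dual constraint.)

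Next I would check that both LPs are feasible and that their optimal values are finite, so that the strong duality theorem applies and not just weak duality. For~(1), the all-zero vector $k \equiv 0$ is feasible, and the objective is bounded above by $n$ because summing the $n$ hyperplane constraints of any single fixed direction $i$ already gives $\sum_\alpha k_\alpha \leq n$. For~(2), the table $\lambda_{i,j}$ that assigns $1$ to each of the $n$ hyperplanes of some fixed direction (and $0$ otherwise) is feasible, as already noted in the introduction, so the dual optimum is at most $n$. With both problems feasible and bounded, strong duality gives equality of the optima, which is exactly the statement of Theorem~\ref{multikonig}.

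The only step that requires a moment's care is the bookkeeping of which dual constraints correspond to which primal variables, in particular handling the primal variables that were forced to $0$ on the complement of $\mathrm{supp}(A)$; these simply drop out rather than producing spurious cover constraints on zero entries, which is why the definition of hyperplane cover only requires coverage on $\mathrm{supp}(A)$. There is no genuine combinatorial obstacle here: the theorem is a direct invocation of strong LP duality once the dual is written down, and rationality plus polynomial-time solvability of the optima — already mentioned in the surrounding text — then follow from standard facts about linear programming with integer data.
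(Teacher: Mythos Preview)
Your proposal is correct and follows essentially the same approach as the paper: the paper's proof simply observes that~(2) is the LP dual of~(1) and invokes the duality theorem, while you additionally spell out the primal/dual correspondence via the matrix $U$ and verify feasibility and boundedness so that strong duality applies. This is the same argument, just with the routine details made explicit.
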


\begin{proof}
Problem~(2) is dual to Problem~(1). The duality theorem for linear programming implies that the optimal value for the objective function of Problem~(1) coincides with the optimal value in Problem~(2).
\end{proof}

We finish this section with one more easy corollary of linear programming.

\begin{teorema} \label{slackuse}
Assume that $A$ is a $d$-dimensional $(0,1)$-matrix of order $n$, $\Lambda$ is its optimal hyperplane cover, and $K$ is an optimal polyplex  in $A$.
\begin{enumerate}
\item If entry $k_{\alpha} > 0$ then index $\alpha$ is covered with weight $1$ by $\Lambda$; if some index $\alpha$ is covered by $\Lambda$ with not unity weight then $k_\alpha = 0$ for all optimal polyplexes in $A$.
\item If $\lambda_{i,j} > 0$ then the sum of entries of $K$ in the hyperplane $\Gamma_{i,j}$ equals $1$; if in a hyperplane $\Gamma_{i,j}$ the sum of entries of $K$ is not equal to $1$ then $\lambda_{i,j} = 0$ for all optimal hyperplane covers of $A$.
\end{enumerate} 
\end{teorema}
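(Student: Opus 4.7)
The plan is to derive both parts as immediate consequences of the complementary slackness conditions for linear programming applied to the primal-dual pair (1)--(2). Since Theorem \ref{multikonig} has already identified (2) as the dual of (1) and shown that their optimal values coincide, the standard complementary slackness theorem is available without further setup.

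For part 1, I would observe that each primal variable $k_\alpha$ with $\alpha \in supp(A)$ corresponds, in the LP duality, to the dual constraint $\sum\limits_{\Gamma_{i,j} \ni \alpha} \lambda_{i,j} \geq 1$. Complementary slackness asserts that for any pair of optimal solutions $K$ and $\Lambda$, either $k_\alpha = 0$ or this dual constraint is tight at $\alpha$. The first assertion of part 1 is the direct reading: if $k_\alpha \neq 0$ then $\alpha$ is covered by $\Lambda$ with weight exactly $1$. The second assertion is the contrapositive together with the remark that the quantifier extends to \emph{all} optimal $K$: if $\Lambda$ is an optimal hyperplane cover in which some $\alpha$ is covered with weight strictly greater than $1$, then pairing $\Lambda$ with any optimal polyplex $K'$ and applying complementary slackness forces $k'_\alpha = 0$.

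Part 2 is symmetric: each dual variable $\lambda_{i,j}$ corresponds to the primal constraint $\sum\limits_{\alpha \in \Gamma_{i,j}} k_\alpha \leq 1$, and complementary slackness gives that $\lambda_{i,j} > 0$ forces this constraint to be tight for every optimal $K$, while slack in the constraint forces $\lambda_{i,j} = 0$ for every optimal $\Lambda$.

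There is essentially no technical obstacle here, since the argument is a textbook invocation of complementary slackness; the only point worth stating carefully is that the phrase ``for all optimal polyplexes'' (respectively, hyperplane covers) is justified because complementary slackness applies to every pairing of an optimal primal solution with an optimal dual solution, not merely to one distinguished pair. Alternatively, both parts can be verified by a direct chain of inequalities: using weak duality and the fact that $K$ and $\Lambda$ have equal objective values, any strict inequality in $\sum\limits_{\Gamma_{i,j} \ni \alpha} \lambda_{i,j} \geq 1$ multiplied by a positive $k_\alpha$ (or any strict inequality in $\sum\limits_{\alpha \in \Gamma_{i,j}} k_\alpha \leq 1$ multiplied by a positive $\lambda_{i,j}$) would contradict equality of the two optima.
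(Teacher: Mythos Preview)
Your proposal is correct and matches the paper's own proof: the paper also invokes the complementary slackness theorem for the primal-dual pair (1)--(2), writing out the two products $k_\alpha\bigl(1-\sum_i \lambda_{i,\alpha_i}\bigr)=0$ and $\lambda_{i,j}\bigl(1-\sum_{\alpha\in\Gamma_{i,j}} k_\alpha\bigr)=0$ and reading off both parts from them. Your remark that complementary slackness applies to \emph{every} optimal primal--dual pair is exactly the justification needed for the ``for all'' quantifiers, and the paper implicitly uses the same point.
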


\begin{proof}
By the complementary slackness theorem applied  Problems~(1) and~(2), we have that for each optimal polyplex $K$ in $A$ and for each optimal hyperplane cover $\Lambda$ it holds
$$k_{\alpha} \left( 1 -\sum\limits_{i=1}^d \lambda_{i,\alpha_i} \right) = 0 \mbox{ for each index } \alpha \in supp(A),$$
and
$$\lambda_{i,j} \left( 1- \sum\limits_{\alpha \in \Gamma_{i,j}} k_{\alpha}\right) = 0 \mbox{ for each hyperplane } \Gamma_{i,j}.$$
The theorem immediately follows from these equalities.
\end{proof}

\section{$2$-dimensional case, main questions and conjectures} \label{problemsection}

We start this section with reminding one of the proofs of the K\"onig--Hall--Egerv\'ary theorem. For this purpose we need one more definition.
 
A rectangular integer matrix is called \textit{totally unimodular} if the determinant of each its square submatrix is equal to $0$ or $\pm 1$.

The proof of the following proposition can be found in book~\cite{schrijv.thLP}.

\begin{utv} \label{unimod}
Let $Q$ be a rectangular $(0,1)$-matrix of size $n \times l$. Assume that columns of $Q$ can be divided into two groups so that rows of the resulting submatrices contain no more than one unity entry. Then the matrix $Q$ is totally unimodular. 
\end{utv}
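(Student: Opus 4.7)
The plan is to prove total unimodularity by induction on the order $k$ of a square submatrix $Q'$ of $Q$, showing that $\det Q' \in \{0, \pm 1\}$. The base case $k = 1$ is immediate since entries are $0$ or $1$. Throughout the induction, I will use the fact that the hypothesis is inherited by any submatrix: the column partition of $Q$ restricts to a partition of the columns of $Q'$, and the at-most-one-in-each-group-per-row condition is preserved.

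For the inductive step on a $k \times k$ submatrix $Q'$, I would split into three cases according to the minimum column sum of $Q'$. First, if some column of $Q'$ is identically zero, then $\det Q' = 0$. Second, if some column of $Q'$ contains exactly one $1$, I expand the determinant along that column: the single nonzero cofactor is $\pm 1$ times the determinant of a $(k-1) \times (k-1)$ submatrix, which is still a submatrix of $Q$ and hence still satisfies the hypothesis, so by the inductive assumption its determinant lies in $\{0, \pm 1\}$, and the same follows for $\det Q'$.

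The substantive case is the third one, where every column of $Q'$ has at least two $1$'s. Counting the total number of $1$'s in $Q'$ by columns gives at least $2k$, while counting by rows and using that each row has at most one $1$ in each of the two column groups gives at most $2k$. Hence the total is exactly $2k$, every row has exactly two $1$'s, and moreover, by the group constraint, each row has exactly one $1$ in each group. Therefore the sum of the columns of $Q'$ lying in the first group equals the all-ones vector, and the same holds for the second group. Subtracting these two identical sums produces a nontrivial linear dependence among the columns of $Q'$, so $\det Q' = 0$.

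The main obstacle is organizing the case split cleanly; once the right trichotomy on the minimum column sum is chosen, the only real content lies in the double-counting argument of Case~3, and everything else is standard cofactor expansion and induction. I expect no need to invoke any deeper tool beyond elementary linear algebra.
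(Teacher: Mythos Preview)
Your proof is correct and is precisely the standard textbook argument; the paper itself does not give a proof but simply refers the reader to Schrijver's book~\cite{shrivbooklin}, where this same induction-and-trichotomy argument appears. One small point you might state explicitly for completeness: in Case~3 both column groups must be nonempty within $Q'$ (otherwise each row would have at most one $1$, giving at most $k$ ones in total and contradicting the lower bound $2k$), which is what guarantees the linear dependence you exhibit is nontrivial.
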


We are ready now to provide the proof for one of equivalent forms of the K\"onig--Hall--Egerv\'ary theorem.

\begin{teorema} \label{2dimkonig}
Given a $2$-dimensional $(0,1)$-matrix $A$ of order $n$, the maximum length of a positive partial diagonal in $A$ is equal to the minimum number of lines sufficient for covering all unity entries.
\end{teorema}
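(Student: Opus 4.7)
The plan is to reduce Theorem~\ref{2dimkonig} to the fractional duality established in Theorem~\ref{multikonig} by showing that in dimension two both linear programs~(1) and~(2) admit integer optimal solutions. Once this is known, an integer optimum of~(1) is a $\{0,1\}$-valued polyplex, that is, the indicator of a positive partial diagonal of $A$, while an integer optimum of~(2) is (after truncating entries at $1$) the indicator of a set of rows and columns covering $\mathrm{supp}(A)$; Theorem~\ref{multikonig} then identifies the maximum length of a partial diagonal with the minimum number of covering lines.

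First I would examine the constraint matrix $U$ of problem~(1) in the case $d=2$: its columns are indexed by $\alpha \in \mathrm{supp}(A)$ and its rows by the $2n$ hyperplanes $\Gamma_{i,j}$, and a cell of a two-dimensional matrix lies in exactly one row-hyperplane ($i=1$) and exactly one column-hyperplane ($i=2$). Hence each column of $U$ carries exactly two unity entries, one in each of the two row-groups determined by the direction. Applying Proposition~\ref{unimod} to $U^{T}$, whose $2n$ columns split by direction into two groups so that every row (one per cell) contains exactly one, hence at most one, unity in each group, we conclude that $U^{T}$, and therefore $U$, is totally unimodular.

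Next, because the feasible polytope of~(1) is $\{k \ge 0 : Uk \le \mathbf{1}\}$ with an integer right-hand side and a totally unimodular constraint matrix, every vertex has integer coordinates; the line-sum constraints further confine each coordinate to $[0,1]$, so these vertices are $\{0,1\}$-valued. The maximum of $\sum_\alpha k_\alpha$ is attained at such a vertex, yielding a partial diagonal of maximum length. The same argument applied to the dual program~(2), whose constraint matrix is $U^{T}$, produces an integer optimal hyperplane cover $\Lambda$; if any entry $\lambda_{i,j}$ exceeded $1$, truncating it to $1$ would preserve feasibility and strictly decrease the objective, so an optimal $\Lambda$ already belongs to $\{0,1\}^{2 \times n}$ and its support is precisely a set of lines covering every unity entry of $A$. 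Invoking Theorem~\ref{multikonig} to equate the two optimal weights then completes the proof.

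The essential step, and in my view the only nontrivial one, is recognising that $U$ satisfies the hypothesis of Proposition~\ref{unimod}. This is exactly what fails in higher dimensions: for $d \ge 3$ every cell of a $d$-dimensional matrix lies in $d$ hyperplanes, so no partition of the $dn$ hyperplanes into two groups can leave at most one unity per cell in each group, and the direct linear-programming route to an integer K\"onig--Hall--Egerv\'ary statement breaks down --- precisely the obstruction that motivates the rest of the paper.
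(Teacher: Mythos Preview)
Your proposal is correct and follows essentially the same route as the paper: both arguments hinge on Proposition~\ref{unimod} to establish total unimodularity of the constraint matrix (you apply it to $U^{T}$, which is the precise orientation the proposition requires, whereas the paper simply identifies $U$ as a bipartite incidence matrix), and then use the standard fact that totally unimodular systems with integer data have integer polyhedral vertices to pass from the fractional duality of Theorem~\ref{multikonig} to the integer statement. Your write-up is in fact more explicit than the paper's---you spell out why the optimal $\Lambda$ is $\{0,1\}$-valued via the truncation argument, and you add the useful remark about why the bipartition of hyperplanes fails for $d\ge 3$---but the underlying mechanism is identical.
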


\begin{proof}
Note that the problem on the maximum length of a positive partial diagonal in $A$ is the integer version of Problem~(1). Similarly, the problem on the minimum number of lines covering all unity entries of $A$ is integer Problem~(2). 

In the $2$-dimensional case, the constraint matrix $U$ for one of these problems is an incidence matrix of a bipartite graph. By Proposition~\ref{unimod}, the matrix $U$ is totally unimodular. It is known (see, for example,~\cite{schrijv.thLP}) that if $U$ is a totally unimodular matrix then for all integer vectors $b$ the polyhedron $\left\{x: Ux \geq b \right\}$ has only integer vertices. 
Therefore, both integer Problems~(1) and~(2) have the same integer optimal value.
\end{proof}

Let us consider $2$-dimensional extremal matrices in more details. 
Theorem~\ref{2dimkonig} means that all $2$-dimensional extremal matrices of order $n$ are exactly the $(0,1)$-matrices with $k$ rows and $n-k-1$ columns filled by $1$ and with all other entries equal zero.
Also, from the proof of Theorem~\ref{2dimkonig} we have that any $2$-dimensional matrix $A$ contains an integer optimal polyplex  (i.e., a partial positive diagonal of the maximal length) and any optimal polyplex in $A$ is a concave sum of partial positive diagonals. All $2$-dimensional extremal matrices have deficiency $\delta = 1$ because  the weight of their optimal polyplex  is equal to $n-1$. 

The optimal hyperplane cover $\Lambda$ of  a $2$-dimensional extremal matrix is unique and is a $(0,1)$-table with $2$ rows and $n$ columns such that one row of $\Lambda$ contains $k$ ones and the other row has $n-k-1$ ones.

It also can be checked that an entry of a $2$-dimensional extremal matrix is contained in some optimal polyplex if and only if it is covered by weight exactly $1$ in the optimal hyperplane cover $\Lambda$.

At last, if we replace any zero entry of a $2$-dimensional extremal matrix by a one then the resulting matrix contains a diagonal. It means that every $2$-dimensional extremal matrix is diagonally extremal.   

Multidimensional extremal matrices are much more complicated objects than the $2$-dimensional case. Let us set up the principal questions studied in this paper.

\bigskip
\begin{quest}
\begin{center}
Does there exist a $1$-to-$1$ correspondence between extremal multidimensional matrices and their optimal hyperplane covers? 
\end{center}
\end{quest}

We believe that the answer to this question is positive. We have no examples of extremal matrices at which this correspondence is failed and we prove it for some classes of extremal matrices.   In Section~4, we consider this question in more details. Based on our studies, we propose the following conjecture.

\begin{con} \label{uniquecon}
Every extremal matrix has a unique optimal hyperplane cover.
\end{con}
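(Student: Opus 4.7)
My plan is a proof by contradiction: assume the extremal matrix $A$ admits two distinct optimal hyperplane covers $\Lambda_1 \neq \Lambda_2$ of common weight $w = n - \delta$, and aim to show that their difference $D = \Lambda_1 - \Lambda_2$ must vanish. Since the set of optimal covers is a face of the cover polytope, every convex combination $\Lambda_t = (1-t)\Lambda_1 + t\Lambda_2$ is itself an optimal cover, so Theorem~\ref{slackuse} can be applied to each $\Lambda_t$ in conjunction with any fixed optimal polyplex $K$ of $A$.

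From Theorem~\ref{slackuse}(1), every index $\alpha$ with $k_\alpha > 0$ is covered with weight exactly $1$ by both $\Lambda_1$ and $\Lambda_2$, giving the linear relations $\sum_{i=1}^d D_{i,\alpha_i} = 0$ for every $\alpha \in \operatorname{supp}(K)$. Theorem~\ref{slackuse}(2) further localises the support of $D$: $D_{i,j}$ can be nonzero only where $\lambda^1_{i,j} + \lambda^2_{i,j} > 0$, and at every such position the $K$-mass in the hyperplane $\Gamma_{i,j}$ sums to $1$. The plan is then to show that as $K$ varies over all optimal polyplexes of $A$, the union $S = \bigcup_K \operatorname{supp}(K)$ produces enough row-sum relations on $D$ to force $D = 0$, subject to the side condition $\Lambda_1 \pm D \geq 0$.

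Extremality is what should make $S$ large enough. For each zero entry $\alpha$ of $A$, the matrix obtained by flipping $\alpha$ to $1$ contains a polydiagonal $K'_\alpha$; subtracting the mass at $\alpha$ gives a polyplex in $A$ of weight at least $n-1$. In the deficiency $\delta = 1$ regime these restrictions are already optimal polyplexes of $A$, so $S$ is immediately pulled close to $\operatorname{supp}(A)$. For general $\delta$, the strategy is to convex-combine families of near-diagonals $K'_\alpha$ indexed by different zero entries with an a priori optimal polyplex, extracting optimal polyplexes whose supports together span a sufficiently generic subset of $\operatorname{supp}(A)$ to make the homogeneous linear system on $D$ have only the trivial solution.

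The principal obstacle is exactly this rigidity step: one must argue that the collective supports of optimal polyplexes in an extremal matrix are rich enough that the only $d \times n$ table $D$ with $\sum_i D_{i,\alpha_i} = 0$ for all $\alpha \in S$ and with $\Lambda_1, \Lambda_1 - D \geq 0$ is the zero table. This is plausible because every zero entry can be promoted to sit in a polydiagonal after a single flip, but producing a general combinatorial rank argument appears delicate. This is presumably why the statement remains at the level of a conjecture in the paper, with uniqueness established only under extra hypotheses (large deficiency or two-valued optimal covers) that supply the missing rigidity.
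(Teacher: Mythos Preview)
The statement you are attempting to prove is listed in the paper as a \emph{conjecture}, not a theorem; the paper provides no proof of it. Consequently there is nothing to compare your argument against, and your final paragraph is accurate: the rigidity step (that the system $\sum_i D_{i,\alpha_i}=0$ for all $\alpha$ in the union of optimal-polyplex supports forces $D=0$) is precisely the unresolved point.

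Your outline up to that point is sound and is essentially a restatement of complementary slackness plus the observation that the optimal-cover set is a face, but it does not close the gap. The obstacle is real: it is not clear that extremality forces the set $S=\bigcup_K\operatorname{supp}(K)$ to be large enough, and your mechanism for enlarging $S$ --- restricting a polydiagonal obtained after flipping a single zero --- yields an optimal polyplex only when $\delta=1$; for $\delta<1$ the restriction has weight $\geq n-1>n-\delta$ is false (it has weight $\geq n-1$, which may be strictly less than $n-\delta$), so you do not automatically get an optimal polyplex, and the convex-combination idea you sketch does not obviously repair this.

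For context, the paper's route to partial results is different from yours: rather than trying to show directly that the linear system on $D$ has only the trivial solution, it records several general LP sufficient conditions for uniqueness (nondegeneracy of an optimal primal solution, or that every weight-$1$ covered index lies in some optimal polyplex, etc.) and verifies them in the special cases $\delta\in\{1,\tfrac12,\tfrac13\}$ and for two-valued covers. It also notes that the stronger Conjecture~\ref{multicon} (all entries of an optimal cover are multiples of $\delta$) would imply uniqueness immediately, since a nontrivial convex combination of two distinct such covers would violate the multiplicity condition.
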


\bigskip
\begin{quest}
\begin{center}
What are the possible values of deficiencies $\delta$ of extremal matrices? Given $n$ and $d$, how small can the deficiency of a $d$-dimensional extremal matrix of order $n$ be?
\end{center}
\end{quest}

It is easy to state that the deficiency of multidimensional extremal matrices is between zero and one.

\begin{utv} \label{weightextr}
If $\delta$ is a deficiency of an extremal matrix $A$,  then $0 < \delta \leq 1$.
\end{utv}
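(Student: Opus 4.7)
The statement $0<\delta\le 1$ splits into two claims. The strict lower bound $\delta>0$ is immediate from the definitions: by hypothesis $A$ is extremal, so $A$ contains no polydiagonal, i.e.\ no polyplex of weight exactly $n$. Hence the optimal polyplex in $A$ has weight strictly less than $n$, and so $\delta=n-w(K_{\mathrm{opt}})>0$. The real content is the upper bound.

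For $\delta\le 1$, I would argue constructively. First observe that $A$ must contain at least one zero entry; otherwise $A$ is the all-ones matrix, which manifestly contains a (0,1)-diagonal and in particular a polydiagonal, contradicting extremality. So pick any index $\alpha$ with $a_\alpha=0$. By the second clause of the definition of extremality, the $(0,1)$-matrix $A'$ obtained from $A$ by setting the entry at $\alpha$ to $1$ has a polydiagonal $K'$ of weight $n$.

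Now define $K$ by $k_\beta=k'_\beta$ for $\beta\ne\alpha$ and $k_\alpha=0$. Since $\alpha$ is the only index on which $A$ and $A'$ differ, we have $\mathrm{supp}(K)\subseteq\mathrm{supp}(A)$, so $K$ is a nonnegative matrix supported on $A$. Setting a single entry to zero can only decrease hyperplane sums, so every hyperplane sum of $K$ is still at most $1$, and $K$ is a polyplex in $A$. Its weight is
\[
w(K)=w(K')-k'_\alpha=n-k'_\alpha\ge n-1,
\]
where the last inequality uses that $k'_\alpha$ is one of the nonnegative summands in a hyperplane sum of $K'$ that is bounded by $1$, so $k'_\alpha\in[0,1]$. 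Therefore the optimal polyplex in $A$ has weight at least $n-1$, giving $\delta=n-w(K_{\mathrm{opt}})\le 1$.

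There is no real obstacle here: the only subtlety is making sure we are allowed to invoke the extremality hypothesis (which requires $A$ to have a zero entry to modify, handled by the all-ones observation) and that the modified polyplex $K$ still lies on $\mathrm{supp}(A)$, which holds because the only change was at $\alpha$, and there we set the entry to $0$.
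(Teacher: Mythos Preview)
Your proof is correct and follows essentially the same argument as the paper: restrict a polydiagonal of the modified matrix back to $A$ by zeroing out the single altered entry, obtaining a polyplex of weight at least $n-1$. You are slightly more explicit than the paper in justifying that $A$ has a zero entry and that the restricted matrix is still a polyplex on $\mathrm{supp}(A)$, but the idea is identical.
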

\begin{proof}
Since $A$ contains no polydiagonals, we have $\delta > 0$. 

By the definition, if we add some entry to the support of $A$, then we obtain a matrix with a polydiagonal $K$. Since $k_\alpha \leq 1$, the weight of the polyplex $K$ constricted back to the matrix $A$ is not less than $n-1$. Therefore, the deficiency $\delta$ of $A$ is at most $1$.
\end{proof}

As we noted before, the weight of an optimal polyplex is rational, so the deficiency $\delta$ of any extremal matrix is some rational number $\nicefrac{p}{q}$. The denominator $q$ of $\delta$ is bound in modulus by the largest minor of the incidence matrix of a $d$-partite hypergraph, so it does not exceed $d^{nd}$.

Considered examples of extremal matrices and the results of our investigation motivate us to propose the following.

\begin{con} \label{defcon}
The deficiency $\delta$ of any extremal matrix is equal to $\nicefrac{1}{m}$ for some $m \in \mathbb{N}$.
\end{con}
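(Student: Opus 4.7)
The plan is to combine the linear-programming framework of Section~2 with the extremality hypothesis, and to argue by contradiction. Let $K^*$ be an optimal polyplex in $A$, of weight $n-\delta$; rationality of the LP~(\ref{maxplexpr}) together with Proposition~\ref{weightextr} gives $\delta = p/q$ in lowest terms with $0 < \delta \leq 1$. The goal is to exclude $p \geq 2$.

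My first step would be to exploit extremality to produce many witness polydiagonals. For every zero index $\alpha$ of $A$, the matrix $A+e_\alpha$ carries a polydiagonal $K^\alpha$; projecting $K^\alpha$ back to $A$ by zeroing its $\alpha$-entry yields a polyplex of weight $n - k^\alpha_\alpha$, so $k^\alpha_\alpha \geq \delta$. Minimising $k_\alpha$ over all weight-$n$ polyplexes supported in $\mathrm{supp}(A) \cup \{\alpha\}$ is itself an LP, and an application of Theorem~\ref{slackuse} to it should force the minimum to equal exactly $\delta$; call such a minimiser $K^\alpha$ tight at $\alpha$. One then has, for every $\alpha \notin \mathrm{supp}(A)$, a tight $K^\alpha$ whose restriction to $A$ is again an optimal polyplex in $A$. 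The second step is to convert the abundance of tight polydiagonals into an explicit polyplex in $A$ of weight strictly greater than $n - \delta$ --- ideally of weight $n - (p-1)/q$ --- which would contradict optimality of $K^*$ and force $p = 1$. The natural tool is an averaging / convex-combination procedure on a well-chosen subfamily $\{K^{\alpha_i}\}$, followed by a truncation that keeps the result supported in $A$; a common denominator $q$ of the $K^{\alpha_i}$'s should propagate to a gain of size $1/q$.

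The main obstacle is precisely this denominator control. The fractional matching polytope of a $d$-partite $d$-uniform hypergraph is not half-integral for $d \geq 3$, so vertex polyplexes can have arbitrary rational coordinates, and extremality is the only structural lever available to rule out numerators $p \geq 2$. I expect a clean general argument to pass through Conjecture~\ref{uniquecon}: once the optimal cover $\Lambda$ is unique, strong complementary slackness determines the primal--dual active set, and $\delta$ is then read off from the LP basis of $\Lambda$. For the classes treated later in the paper --- large deficiencies and two-valued optimal covers --- the admissible bases are few enough to enumerate, and the conjecture should follow case by case; for a general extremal $A$ I anticipate no short proof and expect the conjecture to remain open outside these special classes.
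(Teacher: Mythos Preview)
The statement you are attempting to prove is labelled \emph{Conjecture}~\ref{defcon} in the paper, and the paper does \emph{not} prove it in general: it is put forward on the basis of examples and is verified only in the special cases handled by Theorem~\ref{bigdeltalambda} (deficiencies $1$, $\nicefrac{1}{2}$, $\nicefrac{1}{3}$) and Theorem~\ref{allequalconstr} (two-value optimal covers). Your concluding sentence --- that the general case is expected to remain open --- is therefore exactly the paper's position, and there is no ``paper's own proof'' to compare your argument to.

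That said, your proposed route has a concrete error at step~1. You claim that for every zero index $\alpha$ the minimum of $k_\alpha$ over polydiagonals supported in $\mathrm{supp}(A)\cup\{\alpha\}$ equals $\delta$, and that Theorem~\ref{slackuse} delivers this. It does not: for $\alpha$ covered by an optimal cover $\Lambda$ with weight exactly $1-\delta$, the proof of Theorem~\ref{extrminor} shows that \emph{every} polydiagonal $K$ of $A+e_\alpha$ satisfies $k_\alpha = 1$, not $k_\alpha = \delta$. (Concretely, take the $3$-dimensional extremal matrix of order $2$ in the Appendix, with $\delta=\nicefrac{1}{2}$, and $\alpha=(1,2,2)$: the unique polydiagonal of $A+e_\alpha$ has $k_\alpha = 1$.) Hence the restriction of your ``tight $K^\alpha$'' to $A$ need not be an optimal polyplex in $A$, and the input to your step~2 is not what you assume.

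Step~2 is, in any case, only a wish: ``averaging followed by truncation'' on an unspecified family, hoping a denominator $q$ survives, is not an argument. As you yourself note, for $d\geq 3$ the fractional matching polytope has no integrality structure to exploit, and extremality alone has not been shown to force the numerator of $\delta$ to be $1$. The paper's partial results go in an orthogonal direction: rather than manipulating polyplexes, they constrain the \emph{cover} side via Propositions~\ref{extrnescond}--\ref{biglambdabound} and then enumerate the few admissible covers when $\delta$ is large or two-valued.
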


\bigskip
\begin{quest} \label{charcoverquest}
\begin{center}
Can we describe and enumerate all optimal hyperplane covers of extremal matrices (and extremal matrices itself)?
\end{center}
\end{quest}

In the next sections, we provide some necessary conditions for a hyperplane cover to define an extremal matrix and propose constructions of extremal matrices and their optimal hyperplane covers. Also, we give a characterization of all extremal matrices of big deficiencies. Our research leads us to the following conjecture.

\begin{con}\label{multicon}
If $\Lambda$ is an optimal  hyperplane cover of an extremal matrix of deficiency $\delta$ then all entries $\lambda_{i,j}$ are integer multiples of $\delta$.
\end{con}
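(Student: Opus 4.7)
The plan is to combine LP duality, complementary slackness and the extremality condition in order to force an arithmetic relationship between $\delta$ and the individual entries of $\Lambda$.

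First I would fix an optimal polyplex $K$ in $A$ and apply Theorem~\ref{slackuse}. This gives the two complementary slackness regimes: for every tight index $\alpha$ (meaning $k_\alpha > 0$ in some optimal polyplex) we have $\sum_i \lambda_{i,\alpha_i}=1$, and for every hyperplane $\Gamma_{i,j}$ with $\lambda_{i,j} > 0$ the polyplex weight in $\Gamma_{i,j}$ is exactly $1$. In particular $\sum \lambda_{i,j} = \sum k_\alpha = n-\delta$ by Theorem~\ref{multikonig}.

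Next I would use extremality to produce the key inequality
\[
\sum_{i=1}^{d}\lambda_{i,\alpha_i}\;\leq\;1-\delta\qquad\text{for every }\alpha\text{ with }a_\alpha=0.
\]
Indeed, since flipping $\alpha$ creates a polydiagonal, the matrix $A\cup\{\alpha\}$ has minimum cover weight $n$. Augmenting $\Lambda$ by $1-\sum_i \lambda_{i,\alpha_i}$ at a single coordinate of $\alpha$ gives a cover of $A\cup\{\alpha\}$ of weight $(n-\delta)+(1-\sum_i\lambda_{i,\alpha_i})$, which must be at least $n$. This inequality, together with Theorem~\ref{slackuse}, tells me that every zero entry of $A$ is ``short'' of a tight constraint by at least $\delta$, a quantitative form of extremality that I hope to leverage against the BFS structure of $\Lambda$.

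Then I would view $\Lambda$ as a basic feasible solution of the cover LP~(2). Writing $N=\mathrm{supp}(\Lambda)$, the nonzero entries of $\Lambda$ are uniquely determined by $|N|$ linearly independent tight constraints $\sum_i \lambda_{i,\alpha_i}=1$; let $B$ be the corresponding $|N|\times|N|$ incidence submatrix so that $\Lambda|_N = B^{-1}\mathbf{1}$. By Cramer's rule every $\lambda_{i,j}$ equals $\det B^{(i,j)}/\det B$, a rational with denominator dividing $\det B$. The goal reduces to showing the divisibility relation
\[
\delta \cdot \det B \;\Big|\; \det B^{(i,j)}\qquad\text{for all }(i,j)\in N.
\]
I would try to establish this by interpreting the cofactors $\det B^{(i,j)}$ combinatorially as weights of polyplexes in suitable submatrices of $A$ obtained by removing the hyperplane $\Gamma_{i,j}$, and linking those weights back to the deficiency $\delta$ via the extremality bound from the previous paragraph.

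The main obstacle is precisely this step. In the two-dimensional case the matrix $B$ is totally unimodular by Proposition~\ref{unimod}, $\det B=\pm 1$, $\delta=1$, and the conclusion is immediate; but for $d\geq 3$ the $d$-partite incidence matrix is not totally unimodular and $\det B$ can be large. A backup strategy would be to rescale: set $\Lambda' = \Lambda/\delta$ and reinterpret the LP as a fractional $(1/\delta)$-cover problem with LP optimum $(n-\delta)/\delta$, then argue that the optimal BFS of the rescaled LP is integer by building an explicit integer cover from a union of $m$ hyperplane systems (where $\delta=1/m$), using the $m$-fold structure of optimal polyplexes implied by Conjecture~\ref{defcon}. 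Either route depends on extracting integrality from extremality rather than from unimodularity, and that is the genuinely new ingredient the proof will need.
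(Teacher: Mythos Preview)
The statement you are attempting to prove is labeled as a \emph{conjecture} in the paper, and the paper does not offer a proof of it in general. It is verified only in special cases: Theorem~\ref{bigdeltalambda} handles deficiencies $\delta\in\{1,\tfrac12,\tfrac13\}$ by direct case analysis built on Propositions~\ref{extrnescond}--\ref{biglambdabound}, and Theorem~\ref{allequalconstr} treats matrices whose optimal cover takes only two distinct values. There is therefore no ``paper's own proof'' to compare against.

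Your proposal has a genuine gap, and you identify it yourself. The preliminary steps are sound: the inequality $\sum_i\lambda_{i,\alpha_i}\le 1-\delta$ for zero entries is exactly Proposition~\ref{extrnescond}, and the BFS/Cramer set-up is standard for a vertex solution. But the decisive divisibility claim $\delta\cdot\det B \mid \det B^{(i,j)}$ is asserted without argument; the suggested ``combinatorial interpretation of cofactors as polyplex weights in submatrices'' is a hope, not a lemma, and nothing in the extremality inequality from your second paragraph gives arithmetic control over individual minors of $B$. Your backup route explicitly assumes Conjecture~\ref{defcon} (that $\delta=1/m$), which is itself open and is in fact a \emph{consequence} of the statement you are trying to prove.

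There is also a structural issue upstream of the gap. You treat $\Lambda$ as a basic feasible solution, but the conjecture is stated for \emph{every} optimal cover. Integer multiples of $\delta$ are not closed under convex combination, so proving the claim for one vertex of the optimal face does not settle it for all optimal covers; indeed the paper observes immediately after stating Conjecture~\ref{multicon} that it implies Conjecture~\ref{uniquecon}, so a correct argument must either establish uniqueness along the way or avoid the BFS restriction altogether.
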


Since a convex linear combination of optimal hyperplane covers of some matrix is an optimal hyperplane cover, we note that Conjecture~\ref{multicon} implies Conjecture~\ref{uniquecon} and if for some extremal matrix Conjecture~\ref{multicon} is true then such matrix has a unique optimal hyperplane cover.

\bigskip
\begin{quest}
\begin{center}
What are the properties of multidimensional extremal matrices?
\end{center}
\end{quest}

We know that all $2$-dimensional matrices are diagonally extremal. We do not find examples of multidimensional extremal matrices that are not diagonally extremal, and all our constructions preserve this property. So we propose the following.

\begin{con} \label{diagextrcon}
Every multidimensional extremal matrix is diagonally extremal.
\end{con}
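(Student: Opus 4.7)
The plan is to show that for every zero entry $a_\alpha = 0$ of an extremal matrix $A$, the submatrix $A_\alpha$ of order $n-1$ contains an integer diagonal. The starting point is extremality itself: flipping $a_\alpha$ from $0$ to $1$ yields a matrix $A'$ admitting a polydiagonal $K'$, and $k'_\alpha$ must be strictly positive, for otherwise $K'$ would already sit in $A$ and contradict the fact that $A$ has no polydiagonal. The goal is then to upgrade this fractional witness to an integer diagonal supported in $A_\alpha$.

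First I would apply Theorem~\ref{slackuse} to the pair $(K',\Lambda')$ where $\Lambda'$ is an optimal hyperplane cover of $A'$. This forces $\alpha$ to be covered by $\Lambda'$ with weight exactly $1$ and pins down the structure of $K'$ on the $d$ hyperplanes through $\alpha$. A careful accounting of how much $K'$-weight these hyperplanes collectively carry shows that the restriction of $K'$ to the positions outside all hyperplanes through $\alpha$ is a fractional matching of $A_\alpha$ whose weight is as close to $n-1$ as possible; combined with Theorem~\ref{multikonig} applied inside $A_\alpha$, one deduces that the fractional matching number of $A_\alpha$ equals $n-1$.

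The main obstacle, and the place where I expect the real work to lie, is the conversion of this fractional perfect matching of $A_\alpha$ into an integer one. For $d=2$ the conversion in Theorem~\ref{2dimkonig} is automatic thanks to the total unimodularity of the constraint matrix (Proposition~\ref{unimod}), but for $d\ge 3$ total unimodularity fails and the fractional matching polytope of a $d$-partite $d$-uniform hypergraph has genuine non-integer vertices, so no generic LP-theoretic argument will suffice. To bypass this I would proceed by induction on the order $n$: if $A_\alpha$ has no diagonal, then the failure of the integer Hall-type condition inside $A_\alpha$ should produce a hyperplane cover of $A$ of weight strictly smaller than the optimum, contradicting the extremality of $A$. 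A complementary line of attack is to establish Conjecture~\ref{multicon} first and then exploit the $\delta$-integrality of the optimal cover of $A$ via Theorem~\ref{slackuse} to constrain the support of $K'$ enough that a direct combinatorial extraction of an integer diagonal in $A_\alpha$ becomes possible.
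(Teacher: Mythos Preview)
The statement you are trying to prove is Conjecture~\ref{diagextrcon} in the paper, and the paper does \emph{not} prove it; it is left open, with only partial supporting results (Theorems~\ref{diagextrconstr}, \ref{allequaldiagextr}, \ref{order2char}) for special classes of extremal matrices. So there is no proof in the paper to compare against, and your proposal must be judged on its own merits.

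Your argument has two genuine gaps, each of which the paper itself isolates as a separate open conjecture. First, your step claiming that ``the fractional matching number of $A_\alpha$ equals $n-1$'' is precisely Conjecture~\ref{minorcon}, and your ``careful accounting'' does not establish it. From $k'_\alpha>0$ and Theorem~\ref{slackuse} you get only that $\alpha$ is covered with weight exactly $1$ by $\Lambda'$; this does \emph{not} force $k'_\alpha=1$. When $k'_\alpha<1$, the $d$ hyperplanes through $\alpha$ can carry $K'$-mass at indices other than $\alpha$, and deleting those hyperplanes removes strictly more than $k'_\alpha$ from the total weight, so the restriction of $K'$ to $A_\alpha$ need not have weight $n-1$. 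The paper's Theorem~\ref{extrminor} shows that $k'_\alpha=1$ can be forced only under the additional hypothesis that $\alpha$ is dominated by an index covered with weight exactly $1-\delta$; in general this is open.

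Second, even granting a polydiagonal in $A_\alpha$, the conversion to an integer diagonal is exactly Conjecture~\ref{diagincompcon}, which the paper also leaves open (and proves only for order~$2$ in Proposition~\ref{diagin2}). Your proposed induction on $n$ appeals to an ``integer Hall-type condition'' whose failure would produce a lighter hyperplane cover of $A$; no such condition is known for $d$-partite $d$-uniform hypergraphs with $d\geq 3$, and this step is where the $2$-dimensional argument genuinely breaks. Your alternative route via Conjecture~\ref{multicon} is equally speculative, since that conjecture is open too. In short, your outline correctly identifies the decomposition the paper itself suggests (Conjecture~\ref{diagextrcon} would follow from Conjectures~\ref{minorcon} and~\ref{diagincompcon} together), but it does not supply a proof of either piece.
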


Note that in terms of hypergraphs this conjecture means that every extremal for fractional perfect matchings balanced $d$-partite $d$-uniform hypergraph is also extremal for integer perfect matchings.

\section{Equivalence of extremal matrices and their optimal hyperplane covers}

This section aims to set up a natural bijection between extremal matrices and their optimal hyperplane covers. 
First of all, let us prove that the support of any extremal matrix is exactly the set of indices covered with weight at least $1$ by optimal hyperplane covers.

\begin{teorema} \label{extrcode}
If $\Lambda$ is an optimal hyperplane cover of an extremal matrix $A$ then $A= A(\Lambda)$.
\end{teorema}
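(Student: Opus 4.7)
The plan is to prove the two inclusions $\mathrm{supp}(A) \subseteq \mathrm{supp}(A(\Lambda))$ and $\mathrm{supp}(A(\Lambda)) \subseteq \mathrm{supp}(A)$ separately. The first inclusion is essentially a restatement of the definition of a hyperplane cover: any index $\alpha \in \mathrm{supp}(A)$ must be covered by $\Lambda$ with total weight at least $1$, hence $\alpha \in \mathrm{supp}(A(\Lambda))$. So all the content is in the reverse inclusion.

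For the reverse inclusion I would argue by contradiction. Suppose there exists an index $\alpha$ with $a_\alpha = 0$ that is nevertheless covered by $\Lambda$ with weight at least $1$. Since $A$ is extremal, replacing $a_\alpha$ by $1$ produces a matrix $A'$ that has a polydiagonal, i.e. a polyplex of weight $n$. The key observation is that $\Lambda$ is still a valid hyperplane cover of $A'$: the only new unity entry is at $\alpha$, and by assumption $\alpha$ is already covered with weight at least $1$ by $\Lambda$.

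Now I would apply Theorem~\ref{multikonig} to $A'$: since $A'$ contains a polyplex of weight $n$, the minimum weight of a hyperplane cover of $A'$ must be at least $n$. On the other hand, by Proposition~\ref{weightextr} (or more directly by the extremality of $A$) the weight of an optimal polyplex in $A$ is strictly less than $n$, and a second application of Theorem~\ref{multikonig}---this time to $A$---shows that the weight of the optimal hyperplane cover $\Lambda$ of $A$ is strictly less than $n$. This contradicts the fact that $\Lambda$ is a hyperplane cover of $A'$ of weight at least $n$. Hence no such $\alpha$ exists and $\mathrm{supp}(A(\Lambda)) = \mathrm{supp}(A)$, which is exactly $A = A(\Lambda)$.

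I do not anticipate any serious obstacle here; the proof is a short duality argument. The only thing one has to be careful about is not to confuse ``$\Lambda$ is optimal for $A$'' with ``$\Lambda$ is optimal for $A'$''---the argument only needs that $\Lambda$ is \emph{some} hyperplane cover of $A'$ whose weight is strictly less than $n$, which is then contradicted by the existence of a weight-$n$ polyplex in $A'$ via the LP-duality of Theorem~\ref{multikonig}.
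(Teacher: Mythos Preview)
Your proof is correct and follows essentially the same route as the paper's. The paper's argument is a more compressed version of yours: it notes $A \subseteq A(\Lambda)$, observes that the weight of $\Lambda$ is less than $n$, and concludes that $A(\Lambda)$ (hence any $A'$ obtained by flipping a single zero of $A$ inside $A(\Lambda)$) cannot contain a polydiagonal, contradicting extremality.
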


\begin{proof}
By the definition of the optimal hyperplane cover, $A \subseteq A(\Lambda)$.

Since $\Lambda$ is an optimal hyperplane cover of an extremal matrix, the weight of $\Lambda$ is less than $n$, and $A(\Lambda)$ does not contain a polydiagonal. Thus, if the support $A$ is strictly contained in the support $A(\Lambda)$, then we have a contradiction with the extremality of the matrix $A$. 
\end{proof}

As an immediate corollary of this theorem, we have that every extremal matrix is equivalent to some stepped matrix.

Our next aim is to provide sufficient conditions for the existence of a unique optimal hyperplane cover and state this uniqueness for several classes of extremal matrices.  Note that for other optimal objects a similar statement often does not hold.  For example, even $2$-dimensional extremal matrices may contain many different optimal polyplexes. Moreover, there is plenty of non-extremal matrices $A$ having not unique optimal hyperplane cover.

By the definition, an optimal hyperplane cover of a matrix $A$ is unique if and only if Problem~(\ref{mincoverpr}) for the matrix $A$ has a unique solution. The uniqueness of optimal solutions in linear programming was studied in many papers. In our work  we are guided by~\cite{FloPar.encycopt} and~\cite{mangas.LPuniq}.

It is well known that the dual linear programming problem has a unique optimal solution if there exists a nondegenerate optimal solution of the primal problem. Equivalently, it means that the primal problem has an optimal solution at which the linear independence constraint qualification (LICQ) is fulfilled. In terms of optimal polyplexes and optimal hyperplane covers this condition has the following form.

\begin{utv} \label{optuniind}
Let $A$ be a multidimensional $(0,1)$-matrix and let $K$ be an optimal polyplex in $A$.  For each hyperplane $\Gamma_{i,j}$ assign a $(0,1)$-vector $v_{i,j}$ of length $|supp(A)|$ such that $v_{i,j}(\alpha) = 1$ if and only if $\alpha \in \Gamma_{i,j} \cap supp(A)$.  Suppose $V$ is the set of all vectors $v_{i,j}$ such that $\sum\limits_{\alpha \in \Gamma_{i,j} } k_{\alpha} =1$.  If $V$ is a set of  linearly independent vectors, then the matrix $A$ has a unique optimal hyperplane cover.
\end{utv}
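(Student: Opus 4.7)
The plan is to apply linear programming duality between problems~(\ref{maxplexpr}) and~(\ref{mincoverpr}), together with the complementary slackness conditions recorded in Theorem~\ref{slackuse}. Let $\Lambda$ and $\Lambda'$ be any two optimal hyperplane covers of $A$ and set $\mu=\Lambda-\Lambda'$; the goal is to show $\mu=0$.

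First I would invoke part~2 of Theorem~\ref{slackuse}: since $\Lambda,\Lambda'$ are both optimal and $K$ is an optimal polyplex, any hyperplane $\Gamma_{i,j}$ with $v_{i,j}\notin V$ (i.e.\ $\sum_{\alpha\in\Gamma_{i,j}}k_\alpha<1$) must carry weight $0$ in both covers. Hence both $\Lambda,\Lambda'$, and therefore $\mu$, are supported on $T=\{(i,j):v_{i,j}\in V\}$. Next, part~1 of Theorem~\ref{slackuse} tells us that every $\alpha\in\mathrm{supp}(K)$ is covered by $\Lambda$ and by $\Lambda'$ with weight exactly $1$. Subtracting these equalities yields
\[
\sum_{(i,j)\in T}\mu_{i,j}\,v_{i,j}(\alpha)=0\qquad\text{for every }\alpha\in\mathrm{supp}(K),
\]
so the restriction of $\sum_{(i,j)\in T}\mu_{i,j}v_{i,j}$ to the coordinates indexed by $\mathrm{supp}(K)$ is the zero vector. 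To finish, I would promote this to the vanishing of $\sum_{(i,j)\in T}\mu_{i,j}v_{i,j}$ as a full vector in $\mathbb{R}^{|\mathrm{supp}(A)|}$ and then apply the hypothesis that $V$ is linearly independent to conclude $\mu_{i,j}=0$ for every $(i,j)\in T$, i.e.\ $\Lambda=\Lambda'$.

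The main obstacle is precisely the promotion step: a priori, complementary slackness at the fixed $K$ gives the required equality only on coordinates $\alpha\in\mathrm{supp}(K)$, not on the rest of $\mathrm{supp}(A)$. This is the classical gap handled by the uniqueness criterion for dual LP optima in terms of primal nondegeneracy, and the proposition is essentially a specialization of that criterion. My plan is to quote the LICQ-type statement from~\cite{manga,floudas}: if the gradients of the constraints active at a primal optimum are linearly independent, then the dual optimum is unique. With that tool in hand, it only remains to identify the active-constraint gradients of problem~(\ref{maxplexpr}) at $K$ (the vectors $v_{i,j}\in V$ together with the standard basis vectors $e_\alpha$ corresponding to $k_\alpha=0$) and check that linear independence of $V$ matches the LICQ condition for~(\ref{maxplexpr}) at $K$, after which Theorem~\ref{multikonig} and the cited criterion deliver uniqueness of the optimal hyperplane cover.
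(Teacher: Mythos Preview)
Your approach coincides with the paper's: the proposition is not given a standalone proof there but is presented as the specialization to problems~(\ref{maxplexpr})--(\ref{mincoverpr}) of the standard LP fact (cited from~\cite{floudas,manga}) that a nondegenerate primal optimum --- equivalently, LICQ at a primal optimal point --- forces uniqueness of the dual optimum. Your preliminary complementary-slackness computation is correct and pinpoints exactly where a bare-hands argument stalls, but since you then close the gap by invoking that same criterion, you land on the paper's route.
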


Another sufficient condition for the uniqueness of the optimal hyperplane cover is that every optimal hyperplane cover $\Lambda$ of a matrix $A$ covers with weight $1$ the same set of indices.

\begin{utv} \label{optuniexact}
Let $A$ be a multidimensional $(0,1)$-matrix.
Suppose that all $\alpha \in supp (A)$ that does not belong to the support of any optimal polyplex in $A$ are covered with weight greater than $1$ by any optimal hyperplane cover. Then  $A$ has a unique optimal hyperplane cover.
\end{utv}

To formulate an alternative version of this condition we introduce one more definition. Given a hyperplane cover $\Lambda$, we will say that $\alpha$ is an \textit{upper index} for $\Lambda$ if $\alpha$ is covered with weight at least $1$ by $\Lambda$ and all indices $\beta$  for which all $\lambda_{i,\beta_i} \leq \lambda_{i,\alpha_i}$ and $\sum\limits_{i=1}^d \lambda_{i,\beta_i} < \sum\limits_{i=1}^d \lambda_{i,\alpha_i}$ are covered by $\Lambda$ with weight less than $1$.

\begin{utv} \label{optuniupper}
If an optimal hyperplane cover $\Lambda$ of a matrix $A$ covers all upper indices with weight $1$, then $\Lambda$ is the unique optimal hyperplane cover for this matrix.
\end{utv}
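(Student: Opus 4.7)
My plan is to argue by contradiction. Suppose $\Lambda'$ is another optimal hyperplane cover of $A$, distinct from $\Lambda$, and set $\Delta = \Lambda' - \Lambda$. Since both covers achieve the same minimum total weight, $\sum_{i,j}\Delta_{i,j}=0$, so $\Delta$ has both strictly positive and strictly negative entries. Fix $(i_0,j_0)$ with $\Delta_{i_0,j_0}<0$; then $\lambda_{i_0,j_0}>0$, so by Theorem~\ref{slackuse}(2) every optimal polyplex $K$ is tight on this hyperplane. In particular, $\Gamma_{i_0,j_0}$ contains some $\gamma\in supp(A)$ with $k_\gamma>0$, which by Theorem~\ref{slackuse}(1) satisfies $\sum_i\lambda_{i,\gamma_i}=1$.

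The next step is to select among all $\alpha\in\Gamma_{i_0,j_0}\cap supp(A)$ with $\sum_i\lambda_{i,\alpha_i}=1$ one whose $\lambda$-coordinates $(\lambda_{1,\alpha_1},\ldots,\lambda_{d,\alpha_d})$ are minimal in the product order. By minimality, every $\beta$ strictly below $\alpha$ in that order satisfies $\sum_i\lambda_{i,\beta_i}<1$, making $\alpha$ an upper index for $\Lambda$. Applying Theorem~\ref{extrcode} to $\Lambda$ and $\Lambda'$ yields $A(\Lambda)=A(\Lambda')$, so any such $\beta$ also has $\sum_i\lambda'_{i,\beta_i}<1$ and lies outside $supp(A)$. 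By the hypothesis $\sum_i\lambda_{i,\alpha_i}=1$, and since $\alpha\in supp(A)$ the cover constraint for $\Lambda'$ reads $\sum_i\lambda'_{i,\alpha_i}\geq 1$, i.e.\ $\sum_i\Delta_{i,\alpha_i}\geq 0$. Because $\Delta_{i_0,\alpha_{i_0}}=\Delta_{i_0,j_0}<0$, there must exist $i_1\neq i_0$ with $\Delta_{i_1,\alpha_{i_1}}>0$.

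From this positive entry I would construct an index $\beta$ strictly below $\alpha$ in the $\lambda$-coordinate order that nonetheless lies in $supp(A)$, contradicting the strict-below exclusion above. The main obstacle is this combinatorial swap: one must descend $\alpha_{i_1}$ to some $j'$ with $\lambda_{i_1,j'}<\lambda_{i_1,\alpha_{i_1}}$ while staying inside $supp(A)=A(\Lambda')$. The leverage comes from combining $\Delta_{i_1,\alpha_{i_1}}>0$ with Theorem~\ref{slackuse}(2) applied to $\Lambda'$: the hyperplane $(i_1,\alpha_{i_1})$ carries positive weight under $\Lambda'$, hence is tight for every optimal polyplex, producing a polyplex-support index in that hyperplane whose $\lambda_{i_1}$-coordinate under $\Lambda$ is strictly smaller than $\lambda_{i_1,\alpha_{i_1}}$. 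The argument is ultimately an exchange/cycle-cancellation on the sign pattern of $\Delta$, in the spirit of bipartite matching uniqueness proofs, and admits an equivalent formulation via the linear-independence criterion of Proposition~\ref{optuniind} applied to the tight-hyperplane indicator vectors indexed by upper indices.
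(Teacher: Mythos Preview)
The paper does not actually supply a proof of Proposition~\ref{optuniupper}; it is stated as a sufficient condition drawn from the linear-programming literature (the surrounding text cites \cite{floudas} and \cite{manga}). So there is no argument in the paper to compare against, and your attempt must stand on its own.

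Your proposal has a genuine gap at the point you yourself flag as ``the main obstacle.'' You correctly locate an index $\alpha$ in $\Gamma_{i_0,j_0}\cap supp(A)$ with $\sum_i\lambda_{i,\alpha_i}=1$ (note that the minimality selection is unnecessary: any such $\alpha$ is automatically an upper index, since every $\beta$ coordinatewise below it has strictly smaller total $\lambda$-weight). From $\sum_i\Delta_{i,\alpha_i}\geq 0$ and $\Delta_{i_0,\alpha_{i_0}}<0$ you obtain some $i_1$ with $\Delta_{i_1,\alpha_{i_1}}>0$. But the next step fails: applying Theorem~\ref{slackuse}(2) to $\Lambda'$ produces a polyplex-support index $\gamma\in\Gamma_{i_1,\alpha_{i_1}}$, and such a $\gamma$ has $\gamma_{i_1}=\alpha_{i_1}$, hence $\lambda_{i_1,\gamma_{i_1}}=\lambda_{i_1,\alpha_{i_1}}$, not strictly smaller. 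So you have not produced any index ``strictly below $\alpha$ in the $\lambda$-coordinate order,'' and the intended contradiction does not materialise. What you actually obtain from $\gamma$ (using complementary slackness for both covers, since $k_\gamma>0$) is that $\sum_i\lambda_{i,\gamma_i}=\sum_i\lambda'_{i,\gamma_i}=1$, whence $\sum_{i\neq i_1}\Delta_{i,\gamma_i}<0$ and some new negative entry $\Delta_{i_2,\gamma_{i_2}}$ appears with $i_2\neq i_1$. This puts you back at the start of the loop rather than at a contradiction; the vague appeal to ``cycle-cancellation'' does not close the argument, because cycling through negative $\Delta$-entries gives no bound that forces $\Delta=0$.

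A second, smaller issue: you invoke Theorem~\ref{extrcode} to get $A(\Lambda)=A=A(\Lambda')$, but that theorem requires $A$ to be extremal, which the proposition does not assume. In the paper's applications $A$ is always extremal, so this is harmless in context, but it should be stated as an additional hypothesis if you rely on it.
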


Let us give the last sufficient condition on the uniqueness of optimal hyperplane cover that is derived from~\cite{mangas.LPuniq}.

\begin{utv} \label{optunismall}
A hyperplane cover $\Lambda$ is the unique optimal hyperplane cover of a $(0,1)$-matrix $A$ if it remains an optimal solution for Problem~$(2)$ with objective function $\sum\limits_{i=1}^d \sum\limits_{j=1}^n (1 \pm \varepsilon_{i,j}) \lambda_{i,j}$ for all arbitrary but sufficiently small $\varepsilon_{i,j}$.
\end{utv}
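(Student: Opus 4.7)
My plan is to argue by contrapositive: assume $\Lambda$ is an optimal hyperplane cover of $A$ that is \emph{not} unique, and construct a sufficiently small perturbation $\{\varepsilon_{i,j}\}$ under which $\Lambda$ ceases to be optimal. This is essentially a specialization of the standard LP uniqueness criterion from~\cite{manga}, which is why the hypothesis is phrased as stability of $\Lambda$ under perturbations.

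First, I would pick any other optimal hyperplane cover $\Lambda' \neq \Lambda$ and set $D_{i,j} = \lambda'_{i,j} - \lambda_{i,j}$. Since $\Lambda$ and $\Lambda'$ achieve the same optimal weight for the original objective of (\ref{mincoverpr}), we have $\sum_{i,j} D_{i,j} = 0$, while $D \neq 0$ because $\Lambda' \neq \Lambda$.

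Next I would select the perturbation $\varepsilon_{i,j} = -\epsilon \cdot \mathrm{sgn}(D_{i,j})$ (adopting $\mathrm{sgn}(0)=0$) for an arbitrary sufficiently small $\epsilon > 0$. The perturbed cost difference between $\Lambda'$ and $\Lambda$ then simplifies as
\[
\sum_{i,j}(1+\varepsilon_{i,j})(\lambda'_{i,j}-\lambda_{i,j}) \;=\; \sum_{i,j} D_{i,j} + \sum_{i,j}\varepsilon_{i,j}D_{i,j} \;=\; -\epsilon \sum_{i,j}|D_{i,j}| \;<\; 0.
\]
Because the feasible region of (\ref{mincoverpr}) depends only on the constraint data, $\Lambda'$ remains a hyperplane cover of $A$ under the perturbed problem, and it strictly improves upon $\Lambda$ in the perturbed objective. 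This contradicts the hypothesis that $\Lambda$ remains optimal for every choice of small $\varepsilon_{i,j}$ of either sign, so no such $\Lambda'$ can exist and $\Lambda$ is the unique optimal hyperplane cover.

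There is no serious obstacle here: feasibility of $\Lambda'$ is automatic since only the objective is perturbed, and $(1+\varepsilon_{i,j})>0$ for all small $\epsilon$ so the perturbed minimization remains well-posed. The only substantive choice is the sign pattern of $\varepsilon_{i,j}$, which is forced by the requirement $\varepsilon_{i,j} D_{i,j} \leq 0$ with strict inequality on the support of $D$; the rest is bookkeeping with the identity $\sum_{i,j} D_{i,j}=0$.
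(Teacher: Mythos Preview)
Your argument is correct. The paper itself does not give a proof of this proposition at all: it simply states the result and attributes it to Mangasarian~\cite{manga}. Your contrapositive argument is precisely the standard proof of this LP uniqueness criterion --- pick a second optimizer $\Lambda'$, use $\sum_{i,j}(\lambda'_{i,j}-\lambda_{i,j})=0$, and perturb the cost coefficients along $-\mathrm{sgn}(\lambda'_{i,j}-\lambda_{i,j})$ to make $\Lambda'$ strictly cheaper --- so you have supplied exactly the details the paper defers to the reference. The only cosmetic point is that $\mathrm{sgn}$ is not defined in the paper, but your parenthetical convention $\mathrm{sgn}(0)=0$ handles this adequately.
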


We propose the following conjectures that imply the uniqueness of optimal hyperplane covers of extremal matrices.

\begin{con} [equivalent to Conjecture~\ref{uniquecon}] \label{hypuniind}
Let $A$ be a multidimensional extremal matrix and let $K$ be an optimal polyplex in $A$.  Then the set of $(0,1)$-vector $v_{i,j}$ such that $v_{i,j}(\alpha) = 1$ if and only if $\alpha \in \Gamma_{i,j} \cap supp(A)$ and $\sum\limits_{\alpha \in \Gamma_{i,j} } k_{\alpha} =1$ is linearly independent.
\end{con}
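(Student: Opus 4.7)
The plan is to prove this conjecture by first converting it into Conjecture~\ref{uniquecon} (uniqueness of the optimal hyperplane cover) via the theory of linear programming duality, and then to attack Conjecture~\ref{uniquecon} using the combinatorial structure of extremal matrices.

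First I would verify the claimed equivalence. One direction is immediate from Proposition~\ref{optuniind}: linear independence of the vectors $v_{i,j}$ associated to the tight hyperplanes at some optimal polyplex $K$ already forces the dual problem~(\ref{mincoverpr}) to have a unique solution. For the converse I would invoke the Goldman--Tucker strict complementarity theorem for the primal--dual pair~(\ref{maxplexpr})--(\ref{mincoverpr}): there exist an optimal polyplex $K^{\ast}$ and an optimal hyperplane cover $\Lambda$ that are \emph{strictly} complementary, meaning $\lambda_{i,j} > 0$ if and only if the hyperplane $\Gamma_{i,j}$ is tight at $K^{\ast}$. Suppose, towards a contradiction, that the corresponding vectors $v_{i,j}$ are linearly dependent, so that some nonzero table $\mu$ supported on $\{(i,j) : \lambda_{i,j} > 0\}$ satisfies $\sum_{(i,j)} \mu_{i,j} v_{i,j}(\alpha) = 0$ for every $\alpha \in supp(A)$. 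Then $\Lambda \pm \varepsilon \mu$ remain nonnegative for small $\varepsilon > 0$ (since $\mu$ is supported where $\lambda$ is strictly positive), they are still feasible covers (the $\mu$-correction to every covering sum at each $\alpha \in supp(A)$ vanishes), and optimality of $\Lambda$ against both perturbations forces $\sum_{(i,j)} \mu_{i,j} = 0$. Hence $\Lambda \pm \varepsilon \mu$ would be two distinct optimal hyperplane covers, contradicting Conjecture~\ref{uniquecon}.

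The whole question therefore reduces to Conjecture~\ref{uniquecon}, and this is where I expect the real difficulty to lie. Non-extremal $(0,1)$-matrices routinely admit many optimal hyperplane covers, so any proof must genuinely exploit the combinatorial rigidity imposed by extremality and cannot rest on purely LP-theoretic arguments. A natural line of attack is to combine Theorem~\ref{extrcode}---which pins $supp(A)$ to the ``level-one'' set of every optimal cover---with one of the sufficient conditions from Propositions~\ref{optuniexact}--\ref{optunismall}, arguing by induction on the order $n$ or the dimension $d$ via restriction to a suitable hyperplane of a stepped representative of $A$. The main obstacle is that neither extremality nor the hypotheses of Propositions~\ref{optuniexact}--\ref{optunismall} are obviously preserved under such restrictions, so one would need either to reconstruct the relevant structure at each inductive step or to isolate a new sufficient condition tailored to extremal matrices.
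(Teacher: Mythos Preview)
The statement you are trying to prove is a \emph{conjecture}, not a theorem: the paper does not prove it, and indeed leaves it open. The label ``equivalent to Conjecture~\ref{uniquecon}'' is an assertion of equivalence, not a proof of the conjecture itself; the paper only establishes partial results (deficiencies $1$, $\nicefrac{1}{2}$, $\nicefrac{1}{3}$, and two-value covers) supporting Conjecture~\ref{uniquecon}, and hence supporting this one.

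Your first paragraph --- the equivalence argument --- is correct and in fact goes beyond what the paper writes down. The paper gives only the forward direction explicitly (Proposition~\ref{optuniind}); your use of Goldman--Tucker strict complementarity to produce a strictly complementary pair $(K^{\ast},\Lambda)$ and then perturb $\Lambda$ along a dependence $\mu$ supported on the positive entries of $\Lambda$ is a clean and valid proof of the converse. So you have legitimately justified the bracketed ``equivalent to Conjecture~\ref{uniquecon}'' tag.

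Your second paragraph, however, is not a proof but an honest admission that you do not know how to finish. That is the correct assessment: the paper does not know how to finish either. The inductive strategy you sketch --- restricting to a hyperplane and hoping extremality or one of Propositions~\ref{optuniexact}--\ref{optunismall} survives --- runs into exactly the obstruction you name, and the paper's own example after Construction~\ref{extrconstrorder} (the $3$-dimensional order-$5$ matrix $B$ with a weight-$1$ index $\alpha$ such that $B_{\alpha}$ is \emph{not} extremal) shows concretely that extremality is not preserved under such restrictions. So there is no missing trick you have overlooked; the gap in your proposal coincides with the gap in the current state of the problem.
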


\begin{con} [equivalent to Conjecture~\ref{uniquecon}] \label{hypuniexact}
Let $\Lambda$ be an optimal hyperplane cover of an extremal matrix $A$. If index $\alpha$ is covered with weight $1$ by $\Lambda$ then there exists an optimal polyplex in $A$ whose support contains $\alpha$.
\end{con}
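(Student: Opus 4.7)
The plan is to prove the equivalence claimed in the label: Conjecture~\ref{hypuniexact} holds if and only if Conjecture~\ref{uniquecon} does. Both directions follow from linear programming tools already available to us, with no genuinely new input needed beyond what has been set up.

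For the implication Conjecture~\ref{uniquecon} $\Rightarrow$ Conjecture~\ref{hypuniexact}, I would invoke the Goldman--Tucker strict complementary slackness theorem applied to the dual pair \eqref{maxplexpr}--\eqref{mincoverpr}. It guarantees an optimal primal--dual pair $(K^*,\Lambda^*)$ such that, for every $\alpha\in supp(A)$, one has $k^*_\alpha>0$ if and only if $\sum_{i=1}^{d}\lambda^*_{i,\alpha_i}=1$. Under Conjecture~\ref{uniquecon} the cover $\Lambda^*$ coincides with the unique optimal cover $\Lambda$, so any $\alpha$ covered with weight exactly $1$ by $\Lambda$ satisfies $k^*_\alpha>0$ and hence lies in the support of the optimal polyplex $K^*$.

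For the reverse implication Conjecture~\ref{hypuniexact} $\Rightarrow$ Conjecture~\ref{uniquecon}, I would take the contrapositive of the stated property: for every optimal hyperplane cover $\Lambda$ and every $\alpha\in supp(A)$ not lying in the support of any optimal polyplex, the index $\alpha$ is covered by $\Lambda$ with weight strictly greater than $1$. This is exactly the hypothesis of Proposition~\ref{optuniexact}, which then delivers uniqueness of the optimal hyperplane cover for $A$.

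The main obstacle is not this equivalence itself --- which is essentially bookkeeping on top of Goldman--Tucker and Proposition~\ref{optuniexact} --- but the fact that neither conjecture is settled in general. Any actual progress on Conjecture~\ref{hypuniexact} must exploit the extremality hypothesis on $A$ combinatorially, since the pure LP machinery used here never calls upon the property that filling in any zero of $A$ produces a polydiagonal; supplying this combinatorial input for particular classes of extremal matrices is exactly what the later sections of the paper take up.
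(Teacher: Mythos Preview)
Your argument is correct on both halves. The paper does not actually write out a proof of the equivalence: it only furnishes the implication Conjecture~\ref{hypuniexact} $\Rightarrow$ Conjecture~\ref{uniquecon}, and that via exactly the route you take, namely the contrapositive reformulation feeding into Proposition~\ref{optuniexact}. (The sentence preceding the two conjectures says only that they ``imply the uniqueness of optimal hyperplane covers''.) So on that half you and the paper agree.

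For the reverse implication Conjecture~\ref{uniquecon} $\Rightarrow$ Conjecture~\ref{hypuniexact}, the paper gives nothing explicit; your appeal to the Goldman--Tucker strict complementarity theorem is a clean and standard way to close the loop. One small point worth making explicit in your write-up: the index $\alpha$ covered with weight exactly $1$ by $\Lambda$ automatically lies in $supp(A)$ by Theorem~\ref{extrcode}, so the corresponding dual constraint in~\eqref{mincoverpr} is genuinely present and Goldman--Tucker applies to it. With that remark added, your proof of the equivalence is complete and slightly more detailed than what the paper provides.
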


We conclude this section with a list of cases when extremal matrices have unique optimal hyperplane covers. All proofs will be given in future sections.
\begin{utv} [supporting Conjecture~\ref{uniquecon}]
If $A$ is one of the following extremal matrices then it has a unique optimal hyperplane cover.
\begin{enumerate}
\item $A$ is an extremal matrix of deficiency $1$, $\nicefrac{1}{2}$ or $\nicefrac{1}{3}$.
\item $A$ has an optimal hyperplane cover $\Lambda$ with all entries $\lambda_{i,j} \in \{0, \lambda\}$ for some $\lambda$.
\end{enumerate}
\end{utv}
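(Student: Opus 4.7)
The strategy is to verify, in each case, one of the sufficient conditions for uniqueness recorded in Propositions~\ref{optuniind}--\ref{optunismall}.

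For part~(2), my plan is to apply Proposition~\ref{optuniupper}. Let $\Lambda$ take values in $\{0,\lambda\}$ and let $K$ be an optimal polyplex in $A$, which exists by LP duality (Theorem~\ref{multikonig}). Since $A$ is extremal it is nonzero, so $K$ has positive weight; pick any index $\alpha$ with $k_\alpha>0$. By Theorem~\ref{slackuse}(1), $\alpha$ is covered with weight exactly~$1$; this weight equals $N_\alpha \cdot \lambda$, where $N_\alpha$ is the number of nonzero entries among $\lambda_{1,\alpha_1},\ldots,\lambda_{d,\alpha_d}$, so $\lambda=1/m$ for the positive integer $m := N_\alpha$. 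Now I unpack the upper-index definition in this setting: an index $\beta$ with $\lambda_{i,\beta_i}\leq \lambda_{i,\alpha_i}$ for every $i$ and with strictly smaller total weight corresponds precisely to a proper subcollection of the nonzero covering hyperplanes through $\alpha$, so $N_\beta<N_\alpha$. Hence $\alpha$ is upper iff every proper subcollection of its nonzero covering hyperplanes totals less than $1$, iff $N_\alpha = m$, in which case $\alpha$ carries total weight $m \cdot (1/m) = 1$. Every upper index of $\Lambda$ is therefore covered with weight exactly $1$, and Proposition~\ref{optuniupper} yields uniqueness.

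For part~(1), the deficiency $\delta=1$ case reduces to part~(2) once I show that every optimal cover of weight $n-1$ has entries in $\{0,1\}$: any fractional entry $\lambda_{i,j}\in(0,1)$ can be perturbed against another fractional entry in a direction permitted by the stepped structure of $A$ (Theorem~\ref{extrcode}), strictly decreasing the cover weight while preserving feasibility, contradicting optimality; then part~(2) applies with $\lambda=1$. For $\delta=1/2$ and $\delta=1/3$ I would first establish Conjecture~\ref{multicon} for these specific values, namely that every entry of every optimal cover lies in the lattice $\{0,\delta,2\delta,\dots,1\}$. Once the lattice is pinned down, the support pattern of the cover is so constrained that one can either reduce again to part~(2) (when the nonzero entries turn out to coincide) or verify the linear-independence condition of Proposition~\ref{optuniind} by a finite case analysis of the few admissible tight-hyperplane patterns.

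The main obstacle is \textbf{Conjecture~\ref{multicon} for $\delta=1/3$}: ruling out optimal covers with entries in finer denominators such as $1/6$ or $1/9$ requires a perturbation argument that simultaneously respects every active inequality of the cover LP~(2). This is the technical core of the result; once the correct denominator has been locked in, the remaining structural analysis is a bounded enumeration of patterns and should be routine.
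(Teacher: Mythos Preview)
Your argument for part~(2) is correct and is exactly the paper's route: Theorem~\ref{allequalconstr} invokes Proposition~\ref{optuniupper} in precisely this way, and your unpacking of the upper-index condition in the two-value setting is accurate.

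For part~(1), your overall strategy---first establish that every optimal cover has entries that are integer multiples of $\delta$, then deduce uniqueness---is the paper's strategy as well, and the paper carries out the first step as Theorem~\ref{bigdeltalambda} via the chain of Propositions~\ref{zeroexistextr}--\ref{biglambdabound}. (Your ad hoc perturbation for $\delta=1$ is vaguer than needed: Proposition~\ref{edgebound} already forces any entry not equal to $0$ or $1$ to satisfy $\delta\le\lambda\le 1-\delta$, which is vacuous when $\delta=1$.) Where you diverge from the paper is in the \emph{second} step. Once it is known that \emph{every} optimal cover has all entries in the finite set $\{0,\delta,2\delta,\dots,1\}$, uniqueness is immediate from convexity: the set of optimal hyperplane covers is convex, so if $\Lambda\neq\Lambda'$ were both optimal, then $\tfrac12(\Lambda+\Lambda')$ would also be optimal but would have at least one entry that is \emph{not} a multiple of $\delta$, contradicting Theorem~\ref{bigdeltalambda}. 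The paper records exactly this observation right after Conjecture~\ref{multicon} and invokes it at the end of Section~\ref{condsection}. Your proposed alternative---reducing to part~(2) when the nonzero entries happen to coincide, or else checking the linear-independence condition of Proposition~\ref{optuniind} by a finite case analysis---would work in principle but is unnecessary; the three- and four-value covers that arise for $\delta=\tfrac12$ and $\delta=\tfrac13$ do not in general reduce to the two-value case, so you would be forced into the case analysis, whereas the convexity argument dispatches all three deficiencies in one line.
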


\section{Conditions on optimal hyperplane covers of extremal matrices} \label{condsection}

The main aim of this section is to reveal some necessary and sufficient conditions on hyperplane covers $\Lambda$ for the matrix $A(\Lambda)$ to be extremal.
We start with conditions following from Theorem~\ref{slackuse} (or from the complementary slackness theorem of linear programming).

\begin{utv} \label{zeroexistextr}
If $\Lambda$ is an optimal hyperplane cover of an extremal matrix $A$, then each row of $\Lambda$ contains at least one zero entry. 
\end{utv}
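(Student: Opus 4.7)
The plan is to derive a contradiction from the assumption that some row of $\Lambda$ is strictly positive, using the complementary slackness relation in Theorem~\ref{slackuse} together with the fact that an extremal matrix has $\delta > 0$.

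First, fix an optimal polyplex $K$ in $A$; by Theorem~\ref{multikonig}, its weight equals the weight of $\Lambda$, which equals $n-\delta$. Now suppose, for contradiction, that some direction $i$ has $\lambda_{i,j} > 0$ for every $j = 1, \ldots, n$. By item~2 of Theorem~\ref{slackuse}, strict positivity of $\lambda_{i,j}$ forces $\sum_{\alpha \in \Gamma_{i,j}} k_\alpha = 1$ in every hyperplane $\Gamma_{i,j}$ of direction $i$.

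Summing these equalities over $j = 1, \ldots, n$ counts every entry of $K$ exactly once (since the hyperplanes of a single direction partition the index set $I_n^d$), so the total weight of $K$ equals $n$. This contradicts $n - \delta < n$, where $\delta > 0$ follows from Proposition~\ref{weightextr} (extremality of $A$).

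There is no real obstacle here: the argument is a one-line consequence of complementary slackness once one observes that the hyperplanes of a fixed direction partition the indices. The only thing to be careful about is to invoke the extremality of $A$ to ensure $\delta > 0$ strictly, which is precisely what makes the count $n$ incompatible with the total weight of the optimal polyplex.
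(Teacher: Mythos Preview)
Your argument is correct and is essentially the same as the paper's proof: both use complementary slackness (Theorem~\ref{slackuse}, item~2) together with the fact that an extremal matrix has no polydiagonal (so the optimal polyplex has weight strictly less than $n$). The only difference is cosmetic---you phrase it as a proof by contradiction while the paper argues directly that some hyperplane of each direction must have polyplex sum below $1$ and hence zero weight in $\Lambda$.
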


\begin{proof}
Since $A$ has no polydiagonals, for each direction $i$ there exists a hyperplane $\Gamma_{i,j}$ such that the sum of entries of some optimal polyplex over the hyperplane $\Gamma_{i,j}$ is less than $1$. Theorem~\ref{slackuse} implies that $\lambda_{i,j} = 0.$
\end{proof}

\begin{utv} \label{weightone}
If $\Lambda$ is an optimal hyperplane cover of an extremal matrix $A$ with deficiency $\delta < 1$, then in each hyperplane $\Gamma_{i,j}$ there exists index $\alpha$ covered by $\Lambda$ with weight $1$. 
\end{utv}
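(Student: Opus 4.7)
The plan is to argue by contradiction. Suppose some hyperplane $\Gamma_{i_0,j_0}$ contains no index that is covered by $\Lambda$ with weight exactly~$1$; I will derive from this that $\delta \geq 1$, contradicting the hypothesis $\delta < 1$.

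First I would classify the indices $\alpha \in \Gamma_{i_0,j_0}$ by their $\Lambda$-weight $\sum_{k} \lambda_{k,\alpha_k}$. By Theorem~\ref{extrcode} the equality $A=A(\Lambda)$ holds, so $supp(A)$ coincides exactly with the set of indices of $\Lambda$-weight at least $1$. Under the contradiction hypothesis, every $\alpha \in \Gamma_{i_0,j_0}\cap supp(A)$ must therefore be covered with weight strictly greater than~$1$, and every $\alpha \in \Gamma_{i_0,j_0}\setminus supp(A)$ must be covered with weight strictly less than~$1$.

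Next I would fix any optimal polyplex $K$ in $A$ and argue that $k_\alpha = 0$ for every $\alpha \in \Gamma_{i_0,j_0}$. Indices outside $supp(A)$ satisfy $k_\alpha = 0$ directly from the constraints defining a polyplex on $A$, while indices covered with weight different from~$1$ satisfy $k_\alpha = 0$ by the first part of Theorem~\ref{slackuse}. Hence the hyperplane $\Gamma_{i_0,j_0}$ contributes nothing to the total weight of $K$, and summing the polyplex constraints along direction $i_0$ gives
\[
n-\delta \;=\; \sum_\alpha k_\alpha \;=\; \sum_{j=1}^n \sum_{\alpha\in\Gamma_{i_0,j}} k_\alpha \;\leq\; (n-1)\cdot 1,
\]
which forces $\delta \geq 1$ and produces the desired contradiction.

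I do not expect any substantive obstacle: the whole argument is a short combination of complementary slackness (Theorem~\ref{slackuse}) with the support identification from Theorem~\ref{extrcode}. The only point to keep in mind is that $k_\alpha$ vanishes on $\Gamma_{i_0,j_0}$ for two different reasons, depending on whether $\alpha$ lies in $supp(A)$ or not, and both must be invoked in order to sweep out every index of the hyperplane. The hypothesis $\delta<1$ enters only at the very end, where it ensures that a single deficient hyperplane already overshoots the slack budget $\delta$ available across direction $i_0$.
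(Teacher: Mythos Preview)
Your argument is correct and is essentially the contrapositive of the paper's proof: the paper observes directly that $\delta<1$ forces every hyperplane to contain a positive entry of some optimal polyplex $K$, and then applies Theorem~\ref{slackuse} to that entry, whereas you assume no weight-$1$ index exists and use the same slackness to zero out $K$ on the hyperplane. The appeal to Theorem~\ref{extrcode} is harmless but unnecessary, since Theorem~\ref{slackuse} only needs the cover weight to be \emph{different} from~$1$, not strictly greater.
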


\begin{proof}
Since $\delta < 1$, every hyperplane $\Gamma_{i,j}$ of $A$ contains a positive entry $k_{\alpha}$ of some  optimal polyplex $K$.  By Theorem~\ref{slackuse}, we have that index $\alpha$ is covered with weight $1$ by the hyperplane cover $\Lambda$.
\end{proof}

Let us consider other necessary conditions on optimal hyperplane covers of extremal matrices.
From the definitions we deduce that $\Lambda$ is an optimal hyperplane cover of $A(\Lambda)$ and, moreover, $A(\Lambda)$ is an extremal matrix of order $n$ if and only if both of the following demands hold:
\begin{enumerate}
\item the matrix $A(\Lambda)$ contains a polyplex of the same weight as $\Lambda$;
\item there are no hyperplane covers $\Lambda'$ of weight less than $n$ such that $A(\Lambda) $ is contained in $A(\Lambda')$.
\end{enumerate}

The first demand is equivalent to that a certain system of linear equations is solvable and has a nonnegative solution. Indeed, if $\Lambda$ is an optimal hyperplane cover of weight $W$ for an extremal matrix $A$, then  Problem~(\ref{maxplexpr}) turns to be equivalent to the following system of equations:
\begin{gather*}
\sum\limits_{\alpha \in \Gamma_{i,j}} k_{\alpha} = 1 \mbox{ for all } \lambda_{i,j} >0; \\
\sum\limits_{\alpha \in supp(A)} k_{\alpha} = W.
\end{gather*}

\begin{problem} [related to Question~\ref{charcoverquest}]
What conditions on the table $\Lambda$ guarantee that it defines a multidimensional matrix with an optimal polyplex of the same weight? In other words, for which nonnegative tables $\Lambda$ the above system has a nonnegative solution?
\end{problem}

Let us now consider the second demand claiming that there are no hyperplane covers of nonmaximal weight defining a matrix with bigger support. With the help of this demand, we derive several necessary conditions on optimal hyperplane covers of extremal matrices.

\begin{utv} \label{extrnescond}
If $\Lambda$ is an optimal hyperplane cover of an extremal matrix $A$ of deficiency $\delta$, then there are no indices $\alpha$ covered by $\Lambda$ with weight strictly between $1 - \delta$ and $1$.
\end{utv}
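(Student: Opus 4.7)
The plan is a short contradiction argument using Theorem~\ref{multikonig} and Theorem~\ref{extrcode}. Suppose some index $\alpha$ is covered by $\Lambda$ with weight $w = \sum_{i=1}^d \lambda_{i,\alpha_i}$ satisfying $1-\delta < w < 1$. By Theorem~\ref{extrcode} we have $A = A(\Lambda)$, and since $w < 1$ it follows that $\alpha \notin supp(A)$, i.e.\ $a_\alpha = 0$. Let $A'$ denote the matrix obtained from $A$ by flipping this entry to $1$; because $A$ is extremal, $A'$ contains a polydiagonal, so by Theorem~\ref{multikonig} every hyperplane cover of $A'$ has weight at least $n$.

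I would then exhibit a hyperplane cover of $A'$ of weight strictly less than $n$, which will contradict the preceding sentence. By Theorem~\ref{multikonig} applied to $A$, the weight of $\Lambda$ itself equals $n - \delta$. Define $\Lambda'$ by increasing a single entry of $\Lambda$, say $\lambda_{1,\alpha_1}$, by $1-w$, and leaving all other entries unchanged. Then under $\Lambda'$ the index $\alpha$ is covered with weight exactly $1$, while every other index is covered at least as much as under $\Lambda$. Hence $\Lambda'$ is a valid hyperplane cover of $A'$, of total weight $(n-\delta) + (1-w)$.

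The hypothesis $w > 1-\delta$ gives $1-w < \delta$, so the weight of $\Lambda'$ is strictly less than $n$, the promised contradiction. I see no real obstacle in this argument; the only subtlety is to use both endpoints of the open interval carefully: the inequality $w<1$ is what places $\alpha$ outside $supp(A)$ (and so guarantees that $A'$ strictly enlarges $A$ and hence contains a polydiagonal by extremality), while $w > 1-\delta$ is what keeps the augmented cover strictly below weight $n$. The argument never invokes the value of $\delta$ beyond its defining role in the weight of $\Lambda$, so no hidden assumption on $\delta$ is needed.
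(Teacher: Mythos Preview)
Your proof is correct and is essentially the same as the paper's: both assume an index $\alpha$ covered with weight $1-\varepsilon$ for $0<\varepsilon<\delta$, increase $\lambda_{1,\alpha_1}$ by $\varepsilon$ to obtain a cover $\Lambda'$ of weight less than $n$, and derive a contradiction with extremality. The only cosmetic difference is that the paper phrases the contradiction as ``$A$ is strictly contained in $A(\Lambda')$'' while you route it through Theorem~\ref{multikonig} applied to the augmented matrix $A'$; these are two sides of the same coin.
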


\begin{proof}
Assume that index $\alpha \not\in supp (A)$ is covered by $\Lambda$ with weight $1 - \varepsilon$ for $0 < \varepsilon < \delta$. Consider the hyperplane cover $\Lambda'$ that coincides with $\Lambda$ in all entries except for $\lambda_{1, \alpha_1}' = \lambda_{1, \alpha_1} + \varepsilon$. The weight of $\Lambda'$ is less than $n$ and the matrix $A$ is strictly contained in $A(\Lambda')$ that contradicts to the extremality of $A$. 
\end{proof}

\begin{utv} \label{boundsinrow}
If $\Lambda$ is an optimal hyperplane cover of an extremal matrix $A$ of deficiency $\delta$, then the difference between two non-equal entries $\lambda_{i,j}$ and $\lambda_{i,l}$ from the same row of $\Lambda$  is not less than $\delta$. 
\end{utv}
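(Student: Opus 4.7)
The plan is to derive a direct contradiction from the previous Proposition \ref{extrnescond}. Assume, without loss of generality, that $\lambda_{i,j} < \lambda_{i,l}$ and that $\varepsilon := \lambda_{i,l} - \lambda_{i,j}$ satisfies $0 < \varepsilon < \delta$; my goal will be to exhibit an index whose covering weight by $\Lambda$ lies strictly in the forbidden interval $(1-\delta, 1)$.

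First I would locate a witness index $\alpha \in \Gamma_{i,l}$ covered by $\Lambda$ with weight exactly $1$. Because $\lambda_{i,l} \geq \lambda_{i,j} + \varepsilon > 0$, part 2 of Theorem \ref{slackuse} gives that for every optimal polyplex $K$ in $A$ we have $\sum_{\beta \in \Gamma_{i,l}} k_\beta = 1$, so some $\alpha \in \Gamma_{i,l}$ has $k_\alpha > 0$; then part 1 of Theorem \ref{slackuse} yields that the covering weight $\sum_{s} \lambda_{s,\alpha_s}$ equals $1$ exactly.

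The key step is a one-coordinate translation. Let $\alpha'$ be obtained from $\alpha$ by replacing the $i$-th coordinate $\alpha_i = l$ by $j$. All other coordinates are unchanged, so
\[
\sum_{s=1}^d \lambda_{s,\alpha'_s} \;=\; \sum_{s=1}^d \lambda_{s,\alpha_s} - \lambda_{i,l} + \lambda_{i,j} \;=\; 1 - \varepsilon.
\]
Since $\varepsilon \in (0,\delta)$, this value lies strictly between $1-\delta$ and $1$, contradicting Proposition \ref{extrnescond}.

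I do not expect any real obstacle here: since $j \neq l$ we have $\alpha' \neq \alpha$, and Proposition \ref{extrnescond} is stated for every index of $A$, so no extra argument is needed to verify that $\alpha' \notin \mathrm{supp}(A)$ (indeed, this is automatic from $A = A(\Lambda)$ and the computed weight $1-\varepsilon < 1$). The proof is essentially a single-index perturbation argument riding on the slackness relations of Theorem \ref{slackuse} and the interval restriction of Proposition \ref{extrnescond}.
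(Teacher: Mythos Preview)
Your proof is correct, but it takes a different route from the paper's. The paper argues directly from extremality: assuming $\lambda_{i,j}=\lambda_{i,l}+\varepsilon$ with $0<\varepsilon<\delta$, it builds a new table $\Lambda'$ by raising $\lambda_{i,l}$ to $\lambda_{i,l}+\varepsilon$, observes that $A(\Lambda')$ strictly contains $A=A(\Lambda)$ while the weight of $\Lambda'$ is still below $n$, and thereby contradicts the extremality of $A$. You instead reduce to Proposition~\ref{extrnescond}: you use complementary slackness (Theorem~\ref{slackuse}) to locate an index $\alpha\in\Gamma_{i,l}$ covered with weight exactly $1$, and then the one-coordinate translate $\alpha'\in\Gamma_{i,j}$ has covering weight $1-\varepsilon\in(1-\delta,1)$, which is forbidden. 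Your argument is clean and avoids having to check that the two parallel hyperplanes of $A$ are \emph{strictly} nested (a point the paper asserts from optimality of $\Lambda$); the trade-off is that it leans on Theorem~\ref{slackuse} and on the already-proved Proposition~\ref{extrnescond}, whereas the paper's proof is a self-contained instance of the ``modify the cover'' template used throughout Section~\ref{condsection}.
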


\begin{proof}
For the sake of contradiction, assume that for some $i, j$ and $l$ we have $\lambda_{i,j} = \lambda_{i,l} + \varepsilon$, where $0 < \varepsilon < \delta$. Since $A = A(\Lambda)$ and $\Lambda$ has the minimum weight among all hyperplane covers of $A$, the support of the hyperplane $\Gamma_{i,l}$ is strictly contained in the support of the hyperplane $\Gamma_{i,j}$. 

Consider the hyperplane cover $\Lambda'$ that coincides with $\Lambda$ in all entries except for $\lambda_{i,l}' = \lambda_{i, l} + \varepsilon$, that is $A(\Lambda')$ differs from $A$ in that the hyperplane $\Gamma_{i,l}$ is replaced by the hyperplane $\Gamma_{i,j}$. The weight of $\Lambda'$ is less than $n$ and the matrix $A$ is strictly contained in $A(\Lambda')$, so we have a contradiction to the extremality of $A$. 
\end{proof}

\begin{utv} \label{edgebound}
If $\Lambda$ is an optimal hyperplane cover of an extremal matrix $A$ of deficiency $\delta$, then for every $\lambda_{i,j}$ not equal to zero or one  we have $\delta \leq \lambda_{i,j} \leq 1 - \delta$.
\end{utv}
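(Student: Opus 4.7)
The plan is to prove the two inequalities independently, each by a short argument that combines Proposition~\ref{zeroexistextr} (every row of $\Lambda$ contains a zero) with one of the structural restrictions on optimal covers that has already been established.

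For the lower bound I would argue by contradiction. Assume that $0 < \lambda_{i,j} < \delta$. By Proposition~\ref{zeroexistextr}, row $i$ of $\Lambda$ contains an entry $\lambda_{i,j_0} = 0$. The entries $\lambda_{i,j}$ and $\lambda_{i,j_0}$ are unequal, and their difference is $\lambda_{i,j} < \delta$; this directly contradicts Proposition~\ref{boundsinrow}.

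For the upper bound I would again argue by contradiction, this time constructing a witness index. Assume that $1 - \delta < \lambda_{i,j} < 1$. For each direction $l \neq i$, Proposition~\ref{zeroexistextr} provides a coordinate $\alpha_l$ with $\lambda_{l,\alpha_l} = 0$. Setting $\alpha_i = j$, the resulting index $\alpha = (\alpha_1, \ldots, \alpha_d)$ is covered by $\Lambda$ with total weight $\sum_{l=1}^d \lambda_{l,\alpha_l} = \lambda_{i,j}$, which lies strictly between $1 - \delta$ and $1$. This contradicts Proposition~\ref{extrnescond}.

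Neither step presents a genuine obstacle once the earlier propositions are in hand; the only small observation worth flagging is that Proposition~\ref{extrnescond} applies to arbitrary indices (for $a_\alpha = 1$ the index is already covered with weight at least $1$, so the forbidden interval $(1-\delta,1)$ is automatically avoided there), which ensures that the witness $\alpha$ used in the upper-bound argument yields a legitimate contradiction regardless of whether $\alpha$ happens to belong to $supp(A)$.
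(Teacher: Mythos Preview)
Your proof is correct and follows essentially the same approach as the paper: the lower bound comes from combining Proposition~\ref{zeroexistextr} with Proposition~\ref{boundsinrow}, and the upper bound is obtained by using Proposition~\ref{zeroexistextr} to build an index $\alpha$ in $\Gamma_{i,j}$ whose only nonzero contributing weight is $\lambda_{i,j}$, then invoking Proposition~\ref{extrnescond}. Your closing remark that Proposition~\ref{extrnescond} applies regardless of whether $\alpha\in supp(A)$ is a helpful clarification, since unity entries are automatically covered with weight at least~$1$.
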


\begin{proof}
By Proposition~\ref{zeroexistextr}, each row $i$ of $\Lambda$ contains at least one zero entry, so by Proposition~\ref{boundsinrow} all nonzero $\lambda_{i,j}$ are not less than $\delta$. On the other hand, if there exists $\lambda_{i,j}$ such that $1 - \delta < \lambda_{i,j} < 1$,  then by Proposition~\ref{zeroexistextr} in the hyperplane $\Gamma_{i,j}$  there is an index $\alpha$   covered with zero weights by all hyperplanes of other directions. So the index $\alpha$ is covered by $\Lambda$ with weight  strictly between $1 - \delta$ and $1$, that is not possible by Proposition~\ref{extrnescond}. 
\end{proof}

The last corollary of the second demand is that optimal hyperplane covers cannot have many entries that are greater than $\nicefrac{1}{2}$.

\begin{utv} \label{biglambdabound}
If $\Lambda$ is an optimal hyperplane cover of a multidimensional extremal matrix of order $n$ then the number of entries $\lambda_{i,j} > \nicefrac{1}{2}$ is less than $n$.
\end{utv}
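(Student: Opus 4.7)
The plan is to bound the set $T = \{(i,j) : \lambda_{i,j} > \nicefrac{1}{2}\}$ by double counting against an optimal polyplex $K$ in $A$, using both parts of the complementary slackness statement in Theorem~\ref{slackuse}. The key observation is that on the support of $K$ the coverage sum $\sum_{i=1}^d \lambda_{i,\alpha_i}$ equals exactly $1$, which forces at most one of the weights $\lambda_{i,\alpha_i}$ to exceed $\nicefrac{1}{2}$.

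First, I would fix an optimal polyplex $K$ in $A$; by definition it has weight $n - \delta$. For every $(i,j) \in T$ the entry $\lambda_{i,j}$ is strictly positive, so part~2 of Theorem~\ref{slackuse} gives $\sum_{\alpha \in \Gamma_{i,j}} k_\alpha = 1$. Summing this equation over $T$ and swapping the order of summation,
$$|T| \;=\; \sum_{(i,j) \in T} \sum_{\alpha \in \Gamma_{i,j}} k_\alpha \;=\; \sum_{\alpha} k_\alpha \cdot m(\alpha),$$
where $m(\alpha) = \#\{i : \lambda_{i,\alpha_i} > \nicefrac{1}{2}\}$.

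The second step is to bound $m(\alpha)$ for the indices $\alpha$ with $k_\alpha > 0$. For such $\alpha$, part~1 of Theorem~\ref{slackuse} says $\alpha$ is covered with weight exactly $1$, that is, $\sum_{i=1}^d \lambda_{i,\alpha_i} = 1$ with nonnegative summands; hence at most one of them can exceed $\nicefrac{1}{2}$, and $m(\alpha) \leq 1$. Indices with $k_\alpha = 0$ contribute nothing to the double sum. Combining these observations,
$$|T| \;\leq\; \sum_{\alpha:\, k_\alpha > 0} k_\alpha \;=\; n - \delta \;<\; n,$$
since $\delta > 0$ by Proposition~\ref{weightextr}.

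The argument is short, and the only subtlety is in choosing the correct double count: the naive bound $m(\alpha) \leq d$ would only yield $|T| \leq d(n - \delta)$, and it is the complementary slackness relation $\sum_i \lambda_{i,\alpha_i} = 1$ on the support of $K$ that sharpens the per-index multiplicity from $d$ down to $1$. Nothing beyond Theorem~\ref{slackuse} and the definition of deficiency is required.
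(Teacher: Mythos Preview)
Your proof is correct and is genuinely different from the paper's argument. The paper argues by contradiction and perturbation: assuming there are at least $n$ entries exceeding $\nicefrac{1}{2}$, it builds a new cover $\Lambda'$ by shaving $\varepsilon$ off every big entry and adding $\varepsilon/(d-1)$ to every small one; a case analysis shows $\Lambda'$ still covers $A$, and a count of big versus small entries shows its weight does not exceed that of $\Lambda$, which (together with Proposition~\ref{zeroexistextr}) gives a contradiction. Your route is a direct double count of $|T|$ against an optimal polyplex, using both halves of Theorem~\ref{slackuse}: part~2 turns each big hyperplane into a unit of polyplex mass, and part~1 caps the per-index multiplicity $m(\alpha)$ at~$1$ on $\mathrm{supp}(K)$. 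Your argument is shorter, avoids the perturbation and the appeal to Proposition~\ref{zeroexistextr}, and in fact yields the quantitatively sharper inequality $|T|\le n-\delta$; the paper's approach, on the other hand, illustrates a reusable technique of deforming a cover to test optimality.
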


\begin{proof}
We will say that an entry $\lambda_{i,j}$ of the optimal hyperplane cover $\Lambda$ is big if it is greater than $\nicefrac{1}{2}$ and is small otherwise.

Assume that $\Lambda$ has at least $n$ big entries. For sufficiently small $\varepsilon > 0$, consider the table $\Lambda'$ constructed from $\Lambda$ in the following way: we reduce by $\varepsilon$ all big entries and increase by $\frac{\varepsilon}{d-1}$ all small entries.

Note that $\Lambda'$ is also a hyperplane cover of the matrix $A$. Indeed,
\begin{itemize}
\item if some index was covered by $d$ small entries of $\Lambda$, then  it is covered with a bigger weight by $\Lambda'$;
\item if some index was covered by one big entry and $d-1$ small entries of $\Lambda$, then in it is covered with the same weight by $\Lambda'$;
\item if some unity entry of $A$ was covered by at least two big entries of $\Lambda$, then for sufficiently small $\varepsilon$ it is covered with weight at least $1$ by $\Lambda'$.
\end{itemize} 
By the construction, the weight of $\Lambda'$ is not greater than the weight of $\Lambda$.  Weights of $\Lambda$ and $\Lambda'$ cannot be the same because the table $\Lambda'$ cannot be an optimal hyperplane cover of $A$ (by Proposition~\ref{zeroexistextr}, any optimal hyperplane cover of an extremal matrix should contain zeroes).  If $\Lambda'$ has a smaller weight than $\Lambda$, then it violates the optimality of $\Lambda$. Obtained contradictions complete the proof.
\end{proof}

Using these propositions, we prove that Conjectures~\ref{defcon} and~\ref{multicon} are true for extremal matrices of big deficiencies.

\begin{teorema} [supporting Conjectures~\ref{defcon} and~\ref{multicon}] \label{bigdeltalambda}
Let $\Lambda$ be an optimal hyperplane cover of an extremal matrix with deficiency $\delta$.
\begin{enumerate}
\item If $\delta = 1$  then all $\lambda_{i,j}  \in \{ 0,1\}$.
\item There are no extremal matrices with deficiency $\nicefrac{1}{2} < \delta < 1$. If $\delta = \nicefrac{1}{2}$  then all $\lambda_{i,j}  \in \{ 0, \nicefrac{1}{2}, 1\}$.
\item There are no extremal matrices with deficiency $\nicefrac{1}{3} < \delta < \nicefrac{1}{2}$. If $\delta = \nicefrac{1}{3}$  then all $\lambda_{i,j}  \in \{ 0, \nicefrac{1}{3}, \nicefrac{2}{3}, 1\}$.
\end{enumerate}
\end{teorema}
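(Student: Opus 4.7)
The plan is to combine the structural restrictions already established in Propositions~\ref{zeroexistextr}--\ref{biglambdabound} with a handful of weight-preserving or weight-decreasing modifications of $\Lambda$, escalating from the easy cases to the delicate one at $\delta = 1/3$.

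Part~1 and the case $\delta > 1/2$ in Part~2 are immediate from Proposition~\ref{edgebound}: the interval $[\delta, 1-\delta]$ housing every nonzero non-one entry of $\Lambda$ is either empty (for $\delta > 1/2$ or $\delta = 1$) or equals $\{1/2\}$ (for $\delta = 1/2$). When $[\delta, 1-\delta]$ is empty, $\Lambda$ is $(0,1)$-valued with integer weight, contradicting $n - \delta \notin \mathbb{Z}$ via Theorem~\ref{multikonig} whenever $\delta \in (1/2, 1)$, and trivially handling $\delta = 1$.

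For the principal case $\delta \in (1/3, 1/2)$, Proposition~\ref{boundsinrow} together with $|[\delta, 1-\delta]| = 1 - 2\delta < \delta$ forces each row to carry at most one middle value $v_i$. Letting $T = \{i : \text{row } i \text{ carries a middle value}\}$, I would split on $|T|$. The cases $|T| = 0$ and $|T| = 1$ quickly contradict the weight equation via integer arithmetic (with Proposition~\ref{weightone} supplying a coverage-$1$ index in the latter, forcing $v_i = 1 - (\text{integer})$). For $|T| \geq 2$, anchoring indices at zero-hyperplanes via Proposition~\ref{zeroexistextr} and invoking Proposition~\ref{extrnescond} gives $v_{i_1} + v_{i_2} \geq 1$ for every pair in $T$ (since $2\delta > 1 - \delta$), and Proposition~\ref{weightone} at a middle-valued hyperplane closes $\{v_i\}_{i \in T}$ under $v \mapsto 1-v$. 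These two constraints pin the structure to either (i) all $v_i = 1/2$, or (ii) one distinguished direction $i_0$ with value $v < 1/2$ and all other directions in $T$ carrying $1-v$. Case~(i) fails by the weight equation mod~$2$, which would force $2\delta \in \mathbb{Z}$. For case~(ii), I use the modification $\Lambda'$ obtained by sending every $v$-entry in row $i_0$ to $0$ and every $(1-v)$-entry in the remaining directions of $T$ to $1$. An index-by-index check (using integrality of contributions from directions outside $T$, which force original coverages of the form $v + m$ with $m \geq 1$) shows $\Lambda'$ still covers $A$, so optimality of $\Lambda$ gives $\sum_{i \in T \setminus \{i_0\}} f_i \geq f_{i_0}$; Proposition~\ref{biglambdabound} applied to the count of $\lambda$-entries greater than $1/2$, which evaluates to $O + \sum_{i \in T \setminus \{i_0\}} f_i$, combined with the weight equation produces the strict reverse inequality $f_{i_0} > \sum_{i \in T \setminus \{i_0\}} f_i$, a contradiction.

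The case $\delta = 1/3$ is where I expect the real work to be, because the middle interval has length exactly $\delta$ and so a row may legitimately carry both $1/3$ and $2/3$. The target is to rule out every strict-middle value $v \in (1/3, 2/3) \setminus \{1/3, 2/3\}$. Proposition~\ref{extrnescond} applied to the pair sum $v + 1/3 \in (2/3, 1)$ immediately forbids coexistence of a $v$-row with any $1/3$-entry, which trims the admissible row types. For $v \neq 1/2$, I would deploy the gentler pair of modifications $v \leftrightarrow 1/3$ and $v \leftrightarrow 2/3$ on the distinguished row, coupled with $1-v \leftrightarrow 2/3$ and $1-v \leftrightarrow 1/3$ on the partner direction, to force $f_{i_0} = F_{1-v}$; after this, the weight equation combined with Proposition~\ref{biglambdabound} forces $F_{2/3} = 0$, and the residual weight equation reads $\text{integer} = 1/3 + \text{integer}$, impossible. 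The case $v = 1/2$ is handled analogously: the coexistence argument kills all $1/3$-entries, the symmetric modifications $1/2 \leftrightarrow 1/3$ and $1/2 \leftrightarrow 2/3$ within $T_{1/2}$ force $|T_{1/2}| = 2$ with equal multiplicities, and a final modification $2/3 \to 1/2$ on any $T_{2/3}$-row either decreases $\Lambda$'s weight (contradicting optimality) or forces $T_{2/3} = \emptyset$, after which the weight equation once again yields the integer-versus-non-integer contradiction. The principal obstacle throughout is verifying validity of each modification when the partner set is large, which repeatedly reduces to showing that the fractional residue needed to push some $\alpha \in A$ below coverage~$1$ is never actually achievable as a sum of the allowed contributions $\{0, 1/3, 2/3, 1\}$ -- a purely arithmetic observation, but one that has to be checked profile by profile.
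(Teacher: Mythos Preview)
Your treatment of Clauses~1 and~2, and of the non-existence of deficiencies in $(\nicefrac{1}{2},1)$, matches the paper exactly.

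For Clause~3 the paper takes a much shorter route that you have missed. Before analysing $\Lambda$ at all, the paper invokes Theorem~\ref{onereduction} (the reversibility of Construction~\ref{extrconstrorder}(2)) to strip every $1$-entry from $\Lambda$: if some $\lambda_{i,j}=1$, then deleting the corresponding column of $\Lambda$ yields an optimal hyperplane cover of an extremal matrix of order $n-1$ with the \emph{same} deficiency $\delta$, so one may assume from the start that $\Lambda$ has no $1$-entries. After this reduction, every nonzero entry lies in $[\delta,1-\delta]\subseteq[\nicefrac13,\nicefrac23]$, and the entire argument collapses to a few lines. One picks any $\lambda\in(\nicefrac13,\nicefrac12)$ (its existence is forced by Proposition~\ref{weightone} and the weight not being a half-integer), and then Propositions~\ref{extrnescond} and~\ref{boundsinrow} force every nonzero entry in the $\lambda$-row to equal $\lambda$ and every nonzero entry in every other row to equal $1-\lambda$. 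Since the $\lambda$-row has at most $n-1$ nonzero entries (Proposition~\ref{zeroexistextr}) and the total count of $(1-\lambda)$-entries is at most $n-1$ (Proposition~\ref{biglambdabound}), the weight of $\Lambda$ is at most $(n-1)\lambda+(n-1)(1-\lambda)=n-1<n-\delta$, a contradiction. This handles both the open interval $(\nicefrac13,\nicefrac12)$ and the case $\delta=\nicefrac13$ simultaneously.

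Your own argument for $\delta\in(\nicefrac13,\nicefrac12)$ is correct --- the modification $v\to 0$, $1-v\to 1$ together with the weight equation and Proposition~\ref{biglambdabound} does produce the pair of contradictory inequalities $F\ge f_{i_0}$ and $F<f_{i_0}$ --- but it is noticeably longer, precisely because you carry the $1$-entries through every step instead of removing them up front.

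Where your proposal has a genuine gap is the case $\delta=\nicefrac13$. Here the middle interval $[\nicefrac13,\nicefrac23]$ has length exactly $\delta$, so a single row may legitimately carry \emph{two} middle values (namely $\nicefrac13$ and $\nicefrac23$), and your dichotomy ``one distinguished direction $i_0$ versus all others'' no longer comes for free. Your sketch speaks of ``the distinguished row'' and ``the partner direction'' without establishing that the strict-middle values organize themselves into such a pair; it asserts that certain modifications ``force $f_{i_0}=F_{1-v}$'' and ``$F_{2/3}=0$'' without defining these quantities or verifying that the modifications remain hyperplane covers of $A$; and the closing appeal to ``a purely arithmetic observation \ldots checked profile by profile'' is a promise, not a proof. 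The paper sidesteps all of this: once $1$-entries are gone via Theorem~\ref{onereduction}, the $\delta=\nicefrac13$ case requires no separate work.
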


\begin{proof}
Clauses 1 and 2 immediately follow from Proposition~\ref{edgebound}.

Let us prove Clause 3.  Suppose that $A$ is an extremal matrix of deficiency $\delta$ such that $\nicefrac{1}{3} \leq \delta < \nicefrac{1}{2}$ and let $\Lambda$ be an optimal hyperplane cover of the matrix $A$.  Taking into account Theorem~\ref{onereduction} that will be proved later,  we may assume that the table $\Lambda$ does not contain unity entries. 

Let $\lambda = \lambda_{i,j}$ be some nonzero entry in the table $\Lambda$ that is less than $\nicefrac{1}{2}$. Such $\lambda_{i,j}$ exists because $\Lambda$ should contain entries different from $0, \nicefrac{1}{2}$ and $1$ and because Proposition~\ref{weightone} implies that if there is some entry of $\Lambda$ greater than $\nicefrac{1}{2}$ then there exists an entry less than $\nicefrac{1}{2}$. By Proposition~\ref{edgebound}, $\lambda \geq \delta \geq \nicefrac{1}{3}$.

Assume that $\lambda > \nicefrac{1}{3}$. Then Proposition~\ref{extrnescond}  imply that all entries of $\Lambda$ less than $\nicefrac{1}{2}$ are located only within the $i$-th row.  By Proposition~\ref{boundsinrow}, all nonzero entries in this row are the same and equal to $\lambda$. Using Proposition~\ref{extrnescond} again, we deduce that all nonzero entries in other rows are also the same and equal to $1 - \lambda$ (where $\nicefrac{1}{2} < 1- \lambda <\nicefrac{2}{3}$). Note that the $i$-th row of $\Lambda$ has no more than $n-1$ $\lambda$-entries and, by Proposition~\ref{biglambdabound}, there are no more than $n-1$ $(1-\lambda)$-entries in all other rows. Then the weight of the hyperplane cover $\Lambda$ is not greater than $ n -1 < n - \delta$; a contradiction.

Therefore there are no extremal matrices whose deficiencies are strictly between $\nicefrac{1}{3}$ and $\nicefrac{1}{2}$ and for all optimal hyperplane covers $\Lambda$ of extremal matrices of deficiency $\nicefrac{1}{3}$ it holds $\lambda_{i,j} \in \{0, \nicefrac{1}{3}, \nicefrac{2}{3}, 1 \}$.
\end{proof}

At last we note that, since Conjecture~\ref{multicon} implies Conjecture~\ref{uniquecon},  extremal matrices with deficiencies $1$, $\nicefrac{1}{2}$ and $\nicefrac{1}{3}$ have unique optimal hyperplane covers.

\section{Constructions of extremal matrices}

In this section, we consider several constructions of an infinite series of extremal matrices. All these constructions support conjectures proposed in Section~\ref{problemsection}.

\subsection{Iterative constructions}

We start with constructions based on extremal matrices of a smaller size.
The first construction allows us to easily increase the dimension or discard insignificant directions of hyperplanes of extremal matrices. 

\begin{const} \label{extrconstrdim}
Let $B$ be a $d$-dimensional $(0,1)$-matrix of order $n$ in which each hyperplane of some direction  is the same $(d-1)$-dimensional  matrix $A$.  The matrix $B$ is extremal of deficiency $\delta$ if and only if the matrix $A$ is extremal of deficiency $\delta$. 
\end{const}

\begin{proof}
Without loss of generality, assume that all hyperplanes of the first direction in the matrix $B$ are the same.
Every polyplex $K$ in the matrix $A$ can be expanded to a polyplex $K'$ of the same weight in the matrix $B$ in the following way:  to each  $k_{\alpha}$, $\alpha = (\alpha_1, \ldots, \alpha_{d-1})$, we assign $k'_{\beta} = k_{\alpha}$, where $\beta = (\alpha_1, \alpha_1, \ldots, \alpha_{d-1})$.  
Conversely, for every polyplex in $B$ there is a polyplex of the same weight in $A$: an entry $k_{\alpha}$ of a polyplex in $A$ is the sum of $k'_{\beta}$ from $B$ over all $\beta = (j, \alpha_1, \ldots, \alpha_{d-1})$, $j = 1, \ldots, n$.  Since the above correspondences work when we add some entry to the support of  $A$ or $B$, this operation simultaneously produce or do not produce a polydiagonal in both matrices.
\end{proof}

Next, we describe a series of constructions that allow us to obtain extremal matrices of a greater order from a given extremal matrix.

Let $\Lambda$ be an optimal hyperplane cover of a $d$-dimensional matrix $A$ of order $n$ and of deficiency $\delta$.  On the base of table $\Lambda$, we construct the following  three  $d \times (n+1)$-tables $\Lambda'$:
\begin{const} \label{extrconstrorder}
\begin{enumerate}
\item  For an index $\alpha \in supp(A)$ covered by $\Lambda$ with weight $1$, $\Lambda'$ is obtained from $\Lambda$ by attaching entry $\lambda_{i,\alpha_i}$ to the $i$-th row,  $i = 1, \ldots, d$.
\item $\Lambda'$ is obtained  by attaching a $1$-entry to one of rows of the table $\Lambda$ and attaching $0$-entries to other rows.
\item Assume that the $j$-th row of $\Lambda$ contains a $\delta$-entry. Then we construct a table $\Lambda'$  by attaching a $(1-\delta)$-entry to one of rows of $\Lambda$ (namely, $i$-th row for some $i \neq j$), a $\delta$-entry to the $j$-th row and attaching $0$-entries to other rows.
\end{enumerate}
\end{const}

\begin{teorema} \label{extrconstrorderproof}
If $\Lambda$ is an optimal hyperplane cover of an extremal matrix $A$ of deficiency $\delta$, then tables $\Lambda'$ obtained from $\Lambda$ via Construction~$\ref{extrconstrorder}$ are optimal hyperplane covers of the extremal matrices $B = A(\Lambda')$ of order $n+1$ and of deficiency $\delta$.
\end{teorema}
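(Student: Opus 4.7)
The plan is to verify, uniformly for all three constructions, (i) that $\Lambda'$ is a hyperplane cover of $B := A(\Lambda')$ of weight $n+1-\delta$; (ii) that $B$ admits a polyplex of the same weight, so by Theorem~\ref{multikonig} the cover $\Lambda'$ is optimal and $B$ has deficiency at most $\delta$; and (iii) that $B$ has no polydiagonal but ``filling in'' any zero entry produces one, so $B$ is extremal of deficiency exactly $\delta$.

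Part (i) is immediate from the definition of $A(\Lambda')$ together with the fact that the sum of the attached column equals $1$ in every case: $\sum_i \lambda_{i,\alpha_i}=1$ (since $\alpha$ is covered with weight $1$) in Construction~1, $1+0+\cdots=1$ in Construction~2, and $(1-\delta)+\delta=1$ in Construction~3. For (ii), let $K$ be an optimal polyplex of $A$ (weight $n-\delta$) and set $K'_\beta := K_\beta$ for $\beta \in \{1,\ldots,n\}^d$ together with $K'_{(n+1,\ldots,n+1)} := 1$. In each construction the index $(n+1,\ldots,n+1)$ is covered by $\Lambda'$ with weight $\sum_i \lambda'_{i,n+1}=1$, so it lies in $supp(B)$; the hyperplane constraints on $K'$ are immediate since this new entry is alone in each $\Gamma_{l,n+1}$ and $K'$ agrees with $K$ on the old hyperplanes. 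Theorem~\ref{multikonig} then yields optimality.

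For (iii), the absence of a polydiagonal in $B$ follows from $n+1-\delta<n+1$. Fix any zero entry $\gamma$ of $B$. If all coordinates of $\gamma$ lie in $\{1,\ldots,n\}$, then $\gamma$ is a zero of $A$, a polydiagonal $D$ of ``$A$ with $\gamma$ set to $1$'' exists by extremality of $A$, and $D$ extends to the required polydiagonal by placing weight $1$ at $(n+1,\ldots,n+1)$. Otherwise let $T:=\{l:\gamma_l=n+1\}\neq\emptyset$ and choose $j_l\leq n$ for each $l\in T$ so that (a) the projection $\gamma^*$ (obtained from $\gamma$ by substituting each $\gamma_l=n+1$ with $j_l$) is a zero of $A$ and (b) the mirror index $\zeta$ with $\zeta_l=j_l$ for $l\in T$ and $\zeta_l=n+1$ for $l\notin T$ lies in $supp(B)$. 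Extremality of $A$ then supplies a polydiagonal $D$ of ``$A$ with $\gamma^*$ set to $1$'' with $t:=D_{\gamma^*}>0$, and I would form the polydiagonal of $B$ (with $\gamma$ set to $1$) by keeping $D$'s values on $supp(A)\setminus\{\gamma^*\}$ and adding weights $t$ at $\gamma$, $1-t$ at $(n+1,\ldots,n+1)$, and $t$ at $\zeta$. A direct bookkeeping of the hyperplane sums confirms that the mass shift from $\gamma^*$ to $\gamma$ removes $t$ from $\Gamma_{l,j_l}$ and adds $t$ to $\Gamma_{l,n+1}$ for each $l\in T$, while the two fillers exactly compensate these shifts and fill $\Gamma_{l,n+1}$ for $l\notin T$; the total weight adds up to $n+1$.

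The main obstacle is the combined feasibility of the $j_l$'s. Setting $X:=\sum_{l\in T}\lambda_{l,j_l}$, $Y:=\sum_{l\notin T}\lambda_{l,\gamma_l}$, $Z:=\sum_{l\notin T}\lambda'_{l,n+1}$, and $W:=\sum_{l\in T}\lambda'_{l,n+1}$, conditions (a) and (b) reduce to $X\in[1-Z,\,1-Y)$; the identity $W+Z=1$ together with $W+Y=\sum_l\lambda'_{l,\gamma_l}<1$ shows this interval is non-empty. In Constructions~1 and~2 the interval contains $0$ automatically, so picking $\lambda_{l,j_l}=0$ via Proposition~\ref{zeroexistextr} suffices (and in Construction~1 one may simply take $j_l=\alpha_l$, so that $\zeta$ has cover $\sum_l\lambda_{l,\alpha_l}=1$). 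In Construction~3, the sub-case $T\supseteq\{i,j\}$ is vacuous since it would force $W\geq 1$ contradicting $W+Y<1$, while the sub-case $|T\cap\{i,j\}|=1$ forces $X>0$, and one must locate rows of $\Lambda$ indexed by $T$ containing an entry of value $1-\delta$ (or $1$), or decompose the filler weight $t$ across several auxiliary indices when such entries are scarce. This final sub-case analysis of Construction~3 is where I expect the technical heart of the proof to lie, likely invoking Propositions~\ref{weightone}--\ref{edgebound} to force the existence of the required entries.
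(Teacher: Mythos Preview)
Your treatment of parts (i)--(ii) and of part (iii) for Constructions 1 and 2 is correct and matches the paper's argument (your ``mirror index'' $\zeta$ is exactly the paper's index $\sigma$, chosen so that $B_\sigma$ is a copy of $A$ containing $\gamma$ and with $b_\sigma = 1$). Your handling of Construction 3 in the sub-cases $\{i,j\}\cap T=\emptyset$ and $T\ni j,\ i\notin T$ also goes through: you can take $X=0$ in the first and $X=\delta$ in the second (using the assumed $\delta$-entry in row $j$).

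The genuine gap is precisely where you flag it: Construction 3 with $i\in T$. Here $W=1-\delta$ and (via Proposition~\ref{edgebound}) $Y=0$, so you need $X=\sum_{l\in T}\lambda_{l,j_l}\in[1-\delta,1)$. But nothing guarantees that the rows of $\Lambda$ indexed by $T$ contain entries summing into this window; your fallback of splitting the filler mass across several auxiliary indices is plausible but not carried out, and the suggestion to use an entry of value $1$ cannot work (it would push $X\geq 1$). The paper sidesteps this entirely with a different, one-line idea: it looks at the submatrix $B_\gamma$ rather than at a mirror $B_\sigma$. The hyperplane cover of $B_\gamma$ is $\Lambda$ with one $0$-entry in row $j$ replaced by $\delta$; since row $j$ of $\Lambda$ already has a $\delta$-entry, Proposition~\ref{weightone} supplies an index in that $\delta$-hyperplane covered with weight exactly $1$, and transplanting its non-$j$ coordinates onto $\gamma_j$ yields an index of $A$ covered with weight exactly $1-\delta$. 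Hence $B_\gamma\supsetneq A$ strictly, and the extremality of $A$ immediately gives a polydiagonal in $B_\gamma$ (and then in $B$ with $\gamma$ flipped, by adjoining weight $1$ at $\gamma$). This replaces your subset-sum search with a single structural observation.
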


\begin{proof}
Denote by $\beta$ the index of the matrix $B$  belonging to the intersection of all new hyperplanes. In all three constructions, the submatrix $B_\beta$ is  exactly the matrix $A$ and entry $b_\beta$ in matrices $B$ equals one. 
 Each  matrix $B$ has no polydiagonals, because the weight of $\Lambda'$ is less than $n+1$. Moreover, matrices $B$ contain polyplexes of weight $n+1 - \delta$ that consist of some polyplex of weight $n-\delta$ in the matrix $A$ and  $b_\beta =1$.

To prove the theorem, it remains to show that after adding some index $\gamma$ to the support of $B$  we find a polydiagonal in the resulting matrix.  If  $\gamma$ is taken from the submatrix $B_{\beta}$, then the required statement holds because of the extremality of the matrix $A$. 

Therefore, we may assume that indices $\gamma$ and $\beta$ share the same hyperplanes in directions $i_1, \ldots, i_l$.   Let us find polydiagonals using index $\gamma$ for all three clauses of Construction~\ref{extrconstrorder}.

1.  By the construction, $\alpha$ is an index of $A$ covered by $\Lambda$ with weight $1$ and $\Lambda'$ is obtained from $\Lambda$ by doubling entries corresponding to $\alpha$. Let $\sigma$ denote the index that coincides with $\beta$ in all positions except for $ \sigma_{i_j} = \alpha_{i_j}$ for all $j=1, \ldots, l$. The construction implies that submatrices $B_{\beta}$ and $B_{\sigma}$ coincide. So the submatrix $B_{\sigma}$ is equal to $A$, index $\gamma $ belongs to $ B_{\sigma}$ and  $b_{\sigma} = 1$. Thus, if we replace entry $b_\gamma$ in the matrix $B$ by $1$, then the resulting matrix has a polydiagonal.

2. By Proposition~\ref{zeroexistextr}, there exists an index $\alpha$ covered with weight $0$ by the hyperplane cover $\Lambda$. So it is sufficient to repeat the proof of the previous clause for this index $\alpha$.

3. If index $\gamma$ does not belong to a new hyperplane of direction $i$, then we can follow proofs of the previous clauses. Otherwise, $\gamma$ is an index covered with weight $1 - \delta$ by a hyperplane of direction $i$ and with weight $0$ by hyperplanes of other directions.
The submatrix $B_\gamma$ is defined by a hyperplane cover different from $\Lambda$ only in $j$-th row, where a $\delta$-entry is utilized instead of a $0$-entry. Then the support of the matrix $B_{\gamma}$ strictly contains the support of the matrix $A$. The extremality of the matrix $A$ implies that $B_{\gamma}$ has a polydiagonal. 

\end{proof}

Let us consider which of these constructions can be reversed. Note that in the first clause of Construction~\ref{extrconstrorder} it is not necessary to use an optimal hyperplane cover $\Lambda$ of some extremal matrix to obtain an optimal hyperplane cover $\Lambda'$ of another extremal matrix $B$.

Indeed, consider the following $3$-dimensional matrix of order $5$:
$$B = \left( \begin{array}{ccccc|ccccc|ccccc|ccccc|ccccc}
1 & 1 & 1 & 1 & 1 & 1 & 1 & 1 & 1 & 1 & 1 & 1 & 1 & 1 & 1 & 1 & 1 & 1 & 1 & 0 & 1 & 1 & 0 & 0 & 0 \\ 
1 & 1 & 1 & 1 & 1 & 1 & 1 & 1 & 1 & 0 & 1 & 1 & 1 & 1 & 0 & 1 & 1 & 0 & 0 & 0 & 1 & 0 & 0 & 0 & 0 \\ 
1 & 1 & 1 & 1 & 1 & 1 & 1 & 1 & 1 & 0 & 1 & 1 & \mathbb{1} & 1 & 0 & 1 & 1 & 0 & 0 & 0 & 1 & 0 & 0 & 0 & 0 \\ 
1 & 1 & 1 & 1 & 1 & 1 & 1 & 0 & 0 & 0 & 1 & 1 & 0 & 0 & 0 & 1 & 0 & 0 & 0 & 0 & 0 & 0 & 0 & 0 & 0 \\ 
1 & 1 & 1 & 1 & 0 & 1 & 0 & 0 & 0 & 0 & 1 & 0 & 0 & 0 & 0 & 0 & 0 & 0 & 0 & 0 & 0 & 0 & 0 & 0 & 0 
\end{array} \right).
$$
The optimal hyperplane cover of $B$ is
$$
\Lambda' = \frac{1}{5} \left(\begin{array}{ccccc} 4 & 2 & \mathbb{2} & 1 & 0 \\ 3 & 2 & \mathbb{2} & 1 & 0 \\ 3 & 2 & \mathbb{1} & 1 & 0 \end{array}\right).
$$
The bold entry of the matrix $B$ is covered by the hyperplane cover $\Lambda'$ with weight $1$. Note that the table $\Lambda'$   can be constructed by attaching the bold column to the table
$$\Lambda= \frac{1}{5} \left(\begin{array}{cccc} 4 & 2 & 1 & 0 \\ 3 & 2 & 1 & 0 \\ 3 & 2 & 1 & 0 \end{array}\right).$$
It can be checked that the matrix $A(\Lambda)$ is not extremal and is contained in the following $3$-dimensional extremal matrix of order $4$ with the optimal hyperplane cover $\Lambda''$:
$$\left( \begin{array}{cccc|cccc|cccc|cccc}
1 & 1 & 1 & 1 & 1 & 1 & 1 & 1 & 1 & 1 & 1 & 0 & 1 & 1 & 0 & 0 \\ 
1 & 1 & 1 & 1 & 1 & 1 & 1 & 0 & 1 & 1 & 0 & 0 & 1 & 0 & 0 & 0 \\ 
1 & 1 & 1 & 1 & 1 & 1 & 1 & 0 & 1 & 0 & 0 & 0 & 0 & 0 & 0 & 0 \\ 
1 & 1 & 1 & 0 & 1 & 0 & 0 & 0 & 0 & 0 & 0 & 0 & 0 & 0 & 0 & 0  
\end{array} \right);
~~
\Lambda'' = \frac{1}{7} \left(\begin{array}{cccc} 5 & 3 & 1 & 0 \\ 4 & 3 & 2 & 0 \\ 4 & 3 & 2 & 0 \end{array}\right).
$$

So the first clause of Construction~\ref{extrconstrorder} is not reversible. We would like to note that the above matrix $B$ is an example of an extremal matrix in which there is an index $\alpha$ covered with weight $1$ by an optimal hyperplane cover, but the submatrix $B_{\alpha}$ is not an extremal matrix.

Let us show that the second clause of Construction~\ref{extrconstrorder} can be completed to an equivalence between extremal matrices.

\begin{teorema} \label{onereduction}
Assume that a table $\Lambda'$ is obtained from $\Lambda$ by attaching a $1$-entry to one of the rows and $0$-entries to other rows. Then $A(\Lambda')$ is an extremal matrix  of deficiency $\delta$ if and only if $A(\Lambda)$ is an extremal matrix of deficiency $\delta$. 
\end{teorema}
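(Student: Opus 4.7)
Without loss of generality the attached $1$-entry is in the first row, so $\lambda'_{1,n+1}=1$ and $\lambda'_{i,n+1}=0$ for $i\ge 2$; set $\beta=(n+1,\dots,n+1)$ and observe that the submatrix $B_\beta$ equals $A$ while $b_\beta=1$. The direction ``$A(\Lambda)$ extremal of deficiency $\delta$ $\Rightarrow$ $A(\Lambda')$ extremal of deficiency $\delta$'' is Theorem~\ref{extrconstrorderproof} applied to clause~2 of Construction~\ref{extrconstrorder}, so I focus on the converse: assuming $B=A(\Lambda')$ is extremal of deficiency $\delta$, I will show that $A=A(\Lambda)$ is extremal of deficiency $\delta$.

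The easier half uses a polyplex extension. Given a polyplex $K$ of $A$, setting $k'_\beta:=1$ and leaving every other new entry equal to zero produces a polyplex $K'$ of $B$ whose weight exceeds that of $K$ by one, since every hyperplane sum is either inherited from $K$ (when $j\le n$) or equal to $1$ (for $\Gamma_{i,n+1}$). Hence $A$ has no polydiagonal (otherwise $B$ would too, contradicting $\delta>0$ by Proposition~\ref{weightextr}), and the maximum polyplex weight of $A$ is at most $n-\delta$.

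The core technical step is the following Key Lemma: \emph{if $\Lambda^*$ is any hyperplane cover of $A$ in which every row contains a zero entry, then the table $\Lambda^{*+}$ obtained by appending the column $(1,0,\dots,0)^T$ is a hyperplane cover of $B$.} Indices of $B$ lying in $\Gamma_{1,n+1}$ or entirely in $I^d_n$ are handled trivially. The delicate case is a ``weird'' index $\alpha\in supp(B)$ with $\alpha_1\le n$ and $\alpha_i=n+1$ for $i$ in some nonempty $J\subseteq\{2,\dots,d\}$. Since $\lambda'_{i,n+1}=0$ for $i\in J$, the condition $b_\alpha=1$ unfolds to $\sum_{i\notin J}\lambda_{i,\alpha_i}\ge 1$. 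For each $j\in J$ choose $\tilde a_j\in\{1,\dots,n\}$ with $\lambda^*_{j,\tilde a_j}=0$ (available by hypothesis) and let $\tilde\alpha$ agree with $\alpha$ outside $J$ and equal $\tilde a_j$ at each $j\in J$; the same inequality places $\tilde\alpha\in supp(A)$, so $\Lambda^*$ covers $\tilde\alpha$, and the chosen zeros yield $\sum_{i\notin J}\lambda^*_{i,\alpha_i}\ge 1$, which is precisely the $\Lambda^{*+}$-coverage of $\alpha$.

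Because $A$ has no polydiagonal, the proof of Proposition~\ref{zeroexistextr} (which uses only that fact) shows that every optimal hyperplane cover $\Lambda^*$ of $A$ has a zero in each row. The Key Lemma applied to such $\Lambda^*$ produces a hyperplane cover of $B$ of weight one more than the minimum cover weight of $A$, so the latter is at least $n-\delta$; combined with the upper bound of the previous paragraph and duality (Theorem~\ref{multikonig}), the minimum cover weight and maximum polyplex weight of $A$ both equal $n-\delta$, so $A$ has deficiency $\delta$. For the remaining extremality property, if some $\gamma$ with $a_\gamma=0$ yielded a flip $A^+$ without a polydiagonal, an optimal cover of $A^+$ would again have a zero in each row for the same reason, and the Key Lemma would extend it to a hyperplane cover of $B^+$ of weight strictly less than $n+1$, contradicting that $B^+$ contains a polydiagonal by the extremality of $B$. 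The main obstacle is the Key Lemma itself, where the zero-per-row property of $\Lambda^*$ must be exploited simultaneously across all directions indexed by $J$ to transfer coverage from a weird index of $B$ to a companion index of $A$.
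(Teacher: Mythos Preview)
Your proof is correct and follows the same overall route as the paper: extend a hyperplane cover of $A$ to one of $B$ by appending the column $(1,0,\dots,0)^T$, and thereby transfer the deficiency and the extremality from $B$ to $A$.  The paper's proof, however, simply asserts that ``attaching to $\Lambda''$ the deleted column $\dots$ we obtain some hyperplane cover of $B$'' without justification; your Key~Lemma is precisely the missing verification, and your observation that the zero-in-each-row hypothesis (supplied by the argument of Proposition~\ref{zeroexistextr} once $A$ has no polydiagonal) is what makes the extension work at the ``weird'' indices is the genuine content here.

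There are two further cosmetic differences.  First, the paper deduces that $A$ has no polydiagonal from the claim that $\Lambda'$ is an \emph{optimal} cover of $B$ (hence weight of $\Lambda$ is $<n$); since optimality of $\Lambda'$ is not part of the hypothesis, your polyplex-extension argument ($K\mapsto K'$ with $k'_\beta=1$) is cleaner.  Second, for extremality the paper invokes the ``second demand'' characterization of Section~\ref{condsection} (no $\Lambda'''$ of weight $<n$ with $A\subsetneq A(\Lambda''')$), while you work directly with the flipping definition; both arguments then reduce to the same Key~Lemma applied to $A^+$ in place of $A$, which is legitimate since $A\subseteq A^+$ and the zero-per-row property again holds for any optimal cover of $A^+$ once $A^+$ has no polydiagonal.
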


\begin{proof}
Sufficiency was proved in Theorem~\ref{extrconstrorderproof}, so it remains to prove necessity.
Following the previous notation, let $B$ be an extremal matrix of order $n+1$ and with an optimal hyperplane cover $\Lambda'$ and let $A$ denote the matrix $A(\Lambda)$ of order $n$.

The matrix $A$ has no polydiagonals because the weight of $\Lambda$ is less than $n$. Let us assume that the weight of an optimal polyplex in $A$ is less than $n - \delta$.  Then there exists a hyperplane cover $\Lambda''$ of the matrix $A$ with a smaller weight. Attaching to $\Lambda''$ the deleted column of zero and unity entries, we obtain some hyperplane cover of $B$ whose weight is smaller than $n+1 - \delta$ that is impossible.

Let us prove the extremality of $A$. Suppose that there exists a hyperplane cover $\Lambda'''$ such that the weight of $\Lambda'''$ is less than $n$ and $A$ is strictly contained in $A(\Lambda''')$.  If we attach to the table $\Lambda'''$ the column in which tables $\Lambda$ and $\Lambda'$ differ, then we get a hyperplane cover with weight less than $n+1$ for a matrix strictly containing the matrix $B$. Again, we have a contradiction with the extremality of $B$.
\end{proof}

The third clause of Construction~\ref{extrconstrorder} is not reversible that can be illustrated by many examples. The smallest one is the first $3$-dimensional extremal matrix of order $3$ from the list of extremal matrices in Appendix.

\subsection{Extremal matrices with two-value optimal hyperplane covers}

In this section, we consider one special class of extremal matrices, namely extremal matrices that have optimal hyperplane covers $\Lambda$ for which all $\lambda_{i,j}$ are equal to $0$ or $\lambda$. We give a complete description of such matrices and their optimal hyperplane covers and prove that all conjectures from Section~\ref{problemsection} are true for them.

We start with a characterization of hyperplane covers $\Lambda$ with entries $\lambda_{i,j} \in \{ 0, \nicefrac{1}{m}\}$ for which matrices $A(\Lambda)$ contain a polyplex of the same weight as $\Lambda$.

\begin{lemma} \label{polyplexin1m}
Given $m \in \mathbb{N}$, $m < d$, let $\Lambda$ be a hyperplane cover of a $d$-dimensional matrix $A(\Lambda)$ of order $n$ with entries $\lambda_{i,j} \in \{ 0, \nicefrac{1}{m}\}$  and let the weight $W$ of $\Lambda$ be not greater than $n$. The matrix $A(\Lambda)$ has a polyplex of weight $W$  if and only if the number of nonzero entries in each row of $\Lambda$ is not greater than $W$. 
\end{lemma}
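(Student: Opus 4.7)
The plan is to prove necessity and sufficiency separately. Necessity follows immediately from the linear programming duality already available in Section 2; sufficiency reduces to an explicit combinatorial construction of an integer partial diagonal of length $W$ in $A(\Lambda)$, which is automatically a polyplex of weight $W$.

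\emph{Necessity.} Suppose $K$ is a polyplex of weight $W$ in $A(\Lambda)$. Since $\Lambda$ is itself a hyperplane cover of $A(\Lambda)$ of the same weight $W$, Theorem~\ref{multikonig} forces both $K$ and $\Lambda$ to be optimal. I then apply clause~2 of Theorem~\ref{slackuse}: for every nonzero entry $\lambda_{i,j}$ of $\Lambda$ one has $\sum_{\alpha\in\Gamma_{i,j}}k_{\alpha}=1$. In row $i$ this accounts for $N_i$ hyperplanes of direction $i$ on which the $K$-sum equals $1$, so summing the $K$-weights over direction $i$ yields $W=\sum_{j=1}^{n}\sum_{\alpha\in\Gamma_{i,j}}k_{\alpha}\geq N_i$.

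\emph{Sufficiency.} Now assume $N_i\leq W$ for every row $i$. After reordering hyperplanes in each direction I may take $\lambda_{i,j}=1/m$ exactly when $1\leq j\leq N_i$, so the support of $A(\Lambda)$ consists of those $\alpha$ having at least $m$ coordinates with $\alpha_i\leq N_i$. I aim to produce an integer partial diagonal $\{\alpha^{(k)}\}_{k=1}^{W}$ inside $A(\Lambda)$. Write $\alpha^{(k)}=(f_1(k),\ldots,f_d(k))$ where each $f_i\colon[W]\to[n]$ is an injection (this is the distinct-hyperplanes condition for a partial diagonal). To guarantee that every $\alpha^{(k)}$ belongs to the support, I first produce an auxiliary $(0,1)$-matrix $G\in\{0,1\}^{W\times d}$ whose every row sum equals $m$ and whose column sums equal $N_1,\ldots,N_d$. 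Then I let $f_i$ be any injection that bijects $\{k:G_{ki}=1\}$ onto $\{1,\ldots,N_i\}$ and sends $\{k:G_{ki}=0\}$ into $\{N_i+1,\ldots,n\}$; this is possible precisely because $W\leq n$. By construction $f_i(k)\leq N_i\Leftrightarrow G_{ki}=1$, and the row-sum condition yields $\#\{i:f_i(k)\leq N_i\}=m$ for every $k$, so every $\alpha^{(k)}$ lies in the support.

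The main obstacle is therefore producing the auxiliary matrix $G$. The necessary counts balance because $\sum_i N_i=mW$ by the definition of $W$, the column bound $N_i\leq W$ is our hypothesis, and the row bound $m\leq d$ is ensured by the assumption $m<d$. Existence of $G$ is a standard consequence of the Gale--Ryser theorem, but I intend to give a short self-contained cyclic construction: set $a_i:=\sum_{j<i}N_j$ and place the ones of column $i$ at the rows $\{a_i+1,a_i+2,\ldots,a_i+N_i\}$ taken modulo $W$. Because the endpoint of interval $i$ coincides modulo $W$ with the starting point of interval $i+1$, the number of intervals covering a given row is constant in $k$; by averaging it equals $m$, so each row of $G$ carries exactly $m$ ones, completing the construction.
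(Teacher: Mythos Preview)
Your necessity argument via Theorems~\ref{multikonig} and~\ref{slackuse} is correct and in fact cleaner than the paper's treatment, which extracts necessity only after setting up a particular ansatz for the polyplex.

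The sufficiency argument, however, has a genuine gap: nothing in the lemma guarantees that $W$ is an integer, yet your entire construction (the matrix $G\in\{0,1\}^{W\times d}$, the injections $f_i\colon[W]\to[n]$, the cyclic placement modulo $W$) presupposes $W\in\mathbb{N}$. In general $W=\frac{1}{m}\sum_i N_i$ is only rational, and the main application of the lemma in the paper (Theorem~\ref{allequalconstr}) uses it precisely with $W=n-\nicefrac{1}{m}$, which is never an integer for $m\geq 2$.

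This is not a cosmetic issue that a rounding step repairs. Take $d=3$, $m=2$, $n\geq 2$, and $N_1=N_2=N_3=1$, so $W=\nicefrac{3}{2}$. The support of $A(\Lambda)$ consists of those $\alpha$ with at least two coordinates equal to $1$. Any integer partial diagonal of length $2$ would need a second index with all coordinates $\geq 2$, hence outside the support; so no integer partial diagonal of length exceeding $1$ exists. Nevertheless the fractional polyplex $k_{(1,1,2)}=k_{(1,2,1)}=k_{(2,1,1)}=\nicefrac{1}{2}$ has weight $\nicefrac{3}{2}$. Thus in the non-integer regime a weight-$W$ polyplex \emph{must} be genuinely fractional, and your strategy of producing an integer partial diagonal cannot succeed in principle.

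The paper handles this by working directly with a fractional ansatz: it looks for a polyplex whose nonzero entries take a common value $x_{i_1,\ldots,i_m}$ on each ``type'' of index (those covered with weight $\nicefrac{1}{m}$ in exactly the directions $i_1,\ldots,i_m$), reduces existence of a nonnegative solution to the system $\sum_{\{i_1,\ldots,i_m\}\ni i} z_{i_1,\ldots,i_m}=N_i$, $\sum z_{i_1,\ldots,i_m}=W$, and solves that system by an easy induction on $d$. If you want to keep a combinatorial flavour closer to your argument, you could instead aim for a $(0,\nicefrac{1}{m})$-valued polyplex with $mW=\sum_i N_i$ nonzero entries; but you would then need to place at most $m$ of them in each hyperplane while keeping them all in the support, which requires more than the single $G$-matrix reduction you wrote down.
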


\begin{proof}
Let  $t_i$ denote the number of nonzero entries in the $i$-th row of $\Lambda$. Then $W = \sum\limits_{i=1}^ d t_i/m$.
We look for a polyplex $K$ in the matrix $A(\Lambda)$ with  $k_{\alpha} = x_{i_1,\ldots,i_m}$  if index $\alpha$ is covered with weight $\nicefrac{1}{m}$ by hyperplanes of directions $i_j$, $j = 1, \ldots, m$, and covered with weight  $0$ by hyperplanes of other directions. All other entries of $K$ we set to be zero.

In Section~\ref{condsection}, we stated that the existence of a polyplex of weight $W$ in $A(\Lambda)$ is equivalent to the existence of a nonnegative solution of some system of linear equations. Using the introduced notation, we write this system for our hyperplane cover $\Lambda$: 

\begin{gather*} \label{system1m}
\sum\limits_{i_j = i \mbox{ for some } j} \frac{x_{i_1, \ldots, i_m}}{t_i} \prod\limits_{j=1}^m t_{i_j} \frac{\prod\limits_{l=1}^d (n-t_l)}{\prod\limits_{j=1}^m (n-t_{i_j})} = 1 \mbox{ for all } i = 1, \ldots, d;  \\
\sum\limits_{i_1, \ldots, i_m} x_{i_1, \ldots, i_m} \prod\limits_{j=1}^m t_{i_j} \frac{\prod\limits_{l=1}^d (n-t_l)}{\prod\limits_{j=1}^m (n-t_{i_j})}  = W.
\end{gather*}

Application to this system the following transformation of variables 
$$z_{i_1, \ldots, i_m} = x_{i_1, \ldots, i_m} \prod\limits_{j=1}^m t_{i_j} \frac{\prod\limits_{l=1}^d (n-t_l)}{\prod\limits_{j=1}^m (n-t_{i_j})}$$
give us the system
\begin{gather*}
\sum\limits_{i_j = i \mbox{ for some } j} z_{i_1, \ldots, i_m} = t_i \mbox{ for all } i = 1, \ldots, d;  \\\sum\limits_{i_1, \ldots, i_m} z_{i_1, \ldots, i_m} = W.
\end{gather*}
Note that for the solvability of this system in nonnegative values $z_{i_1, \ldots, i_m}$ it is necessary that each $t_i$ is not greater than $W$. Sufficiency of this condition can be shown by the induction on $d$: for a minimal $t_i$, we set all $z_{i_1, \ldots, i_m}$ in the $i$-th equation to be the same and reduce this system to a system with fewer equations.
\end{proof}

With the help of this lemma, we state that there is a one-to-one correspondence between the set of nonequivalent $d$-dimensional extremal matrices of order $n$ whose optimal hyperplane covers contain only two different entries and the set of Young diagrams with $mn-1$ cells, at most $n-1$ columns and at most $d$ rows ($d > m$).

\begin{teorema} \label{allequalconstr}
\begin{enumerate}
\item If a hyperplane cover $\Lambda$ of weight $n - \nicefrac{1}{m}$ and with entries $\lambda_{i,j} \in \{0, \nicefrac{1}{m}\}$ has at least one zero in each row,  then $\Lambda$ is the unique optimal hyperplane cover of the $d$-dimensional extremal matrix $A = A(\Lambda)$ of order $n$ and deficiency of $\delta = \nicefrac{1}{m}$. 
\item There are no extremal matrices with a two-value optimal hyperplane cover that are not given by the previous clause.
\end{enumerate}
\end{teorema}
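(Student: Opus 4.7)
My plan is to prove Part~1 first, then Part~2. Let $t_i$ denote the number of nonzero entries in row $i$ of $\Lambda$, so $\sum t_i = mn - 1$ and $t_i \leq n - 1$. The hypothesis $t_i \leq n - 1/m$ of Lemma~\ref{polyplexin1m} is immediate, so the lemma produces a polyplex of weight $n - 1/m$ in $A(\Lambda)$; Theorem~\ref{multikonig} then makes $\Lambda$ an optimal cover with deficiency $1/m$, and $A(\Lambda)$ has no polydiagonal. Uniqueness follows from Proposition~\ref{optuniupper}: an upper index $\alpha$ of $\Lambda$ must have specialty count (the number of directions $i$ with $\lambda_{i, \alpha_i} = 1/m$) equal to $m$, since any strict downgrade drops this count below $m$ and hence its coverage below $1$; every such index is covered with weight exactly $m \cdot (1/m) = 1$, fulfilling the proposition's hypothesis.

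The main obstacle is the extremality statement: for every $\alpha \notin supp(A(\Lambda))$ I must produce a polydiagonal in $A(\Lambda) \cup \{\alpha\}$. Let $j_\alpha \leq m - 1$ be the specialty count of $\alpha$, and form $\Lambda'$ by restricting $\Lambda$ to positions $(i, j)$ with $j \neq \alpha_i$, regarded as a two-value cover on a $d$-dimensional matrix of order $n - 1$; its row sums $t'_i$ satisfy $\sum t'_i = mn - 1 - j_\alpha \geq m(n - 1)$. Drop any $m - 1 - j_\alpha$ nonzero entries from $\Lambda'$ to obtain $\Lambda''$ with $\sum t''_i = m(n - 1)$ and $t''_i \leq n - 1$. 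Lemma~\ref{polyplexin1m} applied to $\Lambda''$ (with $W = n - 1$) yields a polyplex $K^*$ of weight $n - 1$ supported in $A(\Lambda'') \subseteq A(\Lambda')$. Because this weight equals the order $n - 1$, summing all hyperplane totals gives $d(n - 1)$ while each summand is at most $1$, so every hyperplane sum of $K^*$ must equal $1$; thus $K^*$ is a fractional perfect matching of $A(\Lambda'')$. Extending by $K(\alpha) = 1$ gives a polyplex $K$ of weight $n$ in $A(\Lambda) \cup \{\alpha\}$: the hyperplanes $\Gamma_{i, \alpha_i}$ are covered entirely by the contribution from $\alpha$ (as $K^*$ is supported on indices $\beta$ with $\beta_i \neq \alpha_i$ for all $i$), while the remaining hyperplanes inherit sum $1$ from $K^*$.

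For Part~2, let $A$ be extremal with optimal cover $\Lambda$ valued in $\{0, \lambda\}$ and having $N$ nonzero entries, and set $m = \lceil 1/\lambda \rceil$; then an index lies in $supp(A)$ exactly when its specialty count is at least $m$. If $\lambda > 1/m$, replacing $\lambda$ by $1/m$ at every nonzero position leaves this support condition unchanged and yields a cover of $A$ with strictly smaller weight $N/m$, contradicting optimality; hence $\lambda = 1/m$. The absence of a polydiagonal gives $N/m < n$, so $N \leq mn - 1$. For the matching lower bound, $A$ nonempty forces at least $m$ rows of $\Lambda$ to have $t_i \geq 1$ (otherwise no index could achieve coverage $\geq 1$), while Proposition~\ref{zeroexistextr} gives $t_i \leq n - 1$; hence one can pick $\alpha$ with specialty count exactly $m - 1$, so $\alpha \notin A$. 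Extending $\Lambda$ by a single $1/m$-entry at some position $(i^*, \alpha_{i^*})$ with $\lambda_{i^*, \alpha_{i^*}} = 0$ yields a cover of $A \cup \{\alpha\}$ with weight $(N + 1)/m$; extremality forces this to be at least $n$, so $N \geq mn - 1$. Combining the bounds yields $N = mn - 1$, and the form of Part~1 is recovered.
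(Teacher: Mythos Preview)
Your proof is correct and follows essentially the same architecture as the paper's: Lemma~\ref{polyplexin1m} for optimality, Proposition~\ref{optuniupper} for uniqueness, a second application of Lemma~\ref{polyplexin1m} to the restricted cover $\Lambda_\alpha$ for extremality, and in Part~2 the identification $\lambda=1/m$ together with a weight-counting argument.

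The only noteworthy differences are in execution. For extremality you handle an arbitrary zero index $\alpha$ directly by dropping $m-1-j_\alpha$ surplus entries from $\Lambda_\alpha$ before invoking the lemma; the paper instead first replaces $\alpha$ by an index $\beta$ of specialty exactly $m-1$ with $A_\beta\subseteq A_\alpha$, so that $\Lambda_\beta$ already has weight $n-1$ and no trimming is needed. Both routes land on the same application of Lemma~\ref{polyplexin1m}. In Part~2 you deduce $\lambda=1/m$ by the monotone replacement $\lambda\mapsto 1/m$, whereas the paper appeals to Theorem~\ref{slackuse} (some index of a nonzero optimal polyplex is covered with weight exactly $1$, forcing $\lambda=1/m$); and your lower bound $N\ge mn-1$ is obtained by first exhibiting an index of specialty $m-1$ and then augmenting $\Lambda$, while the paper phrases the same step as ``change one zero entry to $1/m$ and observe the resulting matrix strictly contains $A$ yet still has no polydiagonal''. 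These are cosmetic rearrangements of the same idea.
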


\begin{proof}
1. By conditions of the theorem, each row of $\Lambda$ has at most $n-1$ nonzero entries that is  not greater than its weight $n - \nicefrac{1}{m}$. So Lemma~\ref{polyplexin1m} implies that $\Lambda$ is an optimal hyperplane cover of the matrix $A = A(\Lambda)$. 

Let us prove that the matrix $A$ is extremal.
Suppose $\alpha$ is an index covered by $\Lambda$ with weight less than $1$. Without loss of generality, we can assume that $\alpha$ is covered with weight $1 - \nicefrac{1}{m}$. Indeed, if $\alpha$ is covered with a smaller weight then, since nonzero entries occupy at least $m$ rows of $\Lambda$, we can easily find an index $\beta$ covered with weight $1 - \nicefrac{1}{m}$ and for which the support of the submatrix $A_\beta$ is contained in the support of the submatrix $A_\alpha$.  
The submatrix $A_\alpha$ has a hyperplane cover of weight $n-1$ that satisfies the condition of Lemma~\ref{polyplexin1m}, therefore $A_{\alpha}$ contains a polydiagonal.

The uniqueness of the optimal hyperplane cover $\Lambda$  follows from Proposition~\ref{optuniupper}.

2. Assume that  $\Lambda$ is an optimal hyperplane cover of some $d$-dimensional extremal matrix $A$ of order $n$ such that all $\lambda_{i,j} \in \{0, \lambda \}$ for some $\lambda >0$.  Let us show that $\Lambda$ satisfies all conditions of Clause $1$. 

 By Theorem~\ref{slackuse}, there exists an index covered with weight $1$ by  $\Lambda$. It yields that $\lambda = \nicefrac{1}{m}$ for some $m \in \mathbb{N}$. Proposition~\ref{zeroexistextr} claims that each row of $\Lambda$ has at least one $0$-entry. At last, if the weight of $\Lambda$ is less than $n - \nicefrac{1}{m}$ then we change one of its zero entry to $\nicefrac{1}{m}$ and obtain a hyperplane cover of some matrix that contains $A$ and does not have a polydiagonal: a contradiction to the extremality of $A$.
\end{proof}

As an application of the proposed constructions, we give a characterization of all extremal matrices of deficiencies $1$ and $\nicefrac{1}{2}$.

\begin{teorema} \label{biglambdachar}
\begin{enumerate}
\item There is a one-to-one correspondence between nonequivalent $d$-dimensional extremal matrices of order $n$ and of deficiency $1$ and the Young diagrams with $n-1$ cells and no more than $d$ rows.
\item  $\Lambda$ is the optimal hyperplane cover of a $d$-dimensional extremal matrix of order $n$ and deficiency $\nicefrac{1}{2}$ if and only if all $\lambda_{i,j} \in \{ 0, \nicefrac{1}{2}, 1 \}$, weight of $\Lambda$ is $n - \nicefrac{1}{2}$ and the number of $\nicefrac{1}{2}$-entries in each row of $\Lambda$ is less  than the number of $\nicefrac{1}{2}$-entries in the union of all other rows.  
\end{enumerate}
\end{teorema}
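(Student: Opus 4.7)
The plan is to leverage Theorems~\ref{bigdeltalambda}, \ref{onereduction}, and \ref{allequalconstr} to reduce both clauses to the two-valued case.

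For Part~1, Theorem~\ref{bigdeltalambda}(1) forces the (unique) optimal hyperplane cover $\Lambda$ of an extremal matrix of deficiency $1$ to have entries in $\{0,1\}$; its weight is $n-1$, and by Proposition~\ref{zeroexistextr} each row of $\Lambda$ contains a zero. These are precisely the hypotheses of Theorem~\ref{allequalconstr}(1) with $m=1$, which in turn identifies such $\Lambda$ as the optimal covers of extremal matrices of deficiency $1$. Passing to the equivalence class, all that survives of $\Lambda$ is the multiset $\{r_1,\ldots,r_d\}$ of row sums, with $r_i\ge 0$ and $\sum_i r_i=n-1$ --- i.e., a partition of $n-1$ into at most $d$ parts, equivalently a Young diagram with $n-1$ cells and at most $d$ rows.

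For Part~2, necessity of the entry set $\{0,\nicefrac{1}{2},1\}$ and of the weight $n-\nicefrac{1}{2}$ is immediate from Theorem~\ref{bigdeltalambda}(2) and the definition of deficiency. The crucial step is the dominance condition. Writing $s_i$ for the number of $\nicefrac{1}{2}$-entries in row $i$ and $S=\sum_j s_j$, I will argue by contradiction: suppose $s_i\ge S-s_i$ for some $i$, and define $\Lambda'$ by replacing every $\nicefrac{1}{2}$ in row $i$ by $0$ and every $\nicefrac{1}{2}$ outside row $i$ by $1$. A short case check --- splitting on whether $\alpha\in\mathrm{supp}(A)$ is covered by a $\nicefrac{1}{2}$ from row $i$ and on the number of $\nicefrac{1}{2}$'s from the remaining rows, using that each direction contributes at most one covering entry --- shows $\Lambda'$ still covers every unity entry of $A$ with weight at least $1$; the only delicate case is $\lambda_{i,\alpha_i}=\nicefrac{1}{2}$ with no other $\nicefrac{1}{2}$ covering $\alpha$, in which case the remaining $\{0,1\}$-valued coverage is already an integer $\ge 1$. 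The weight change is $(S-2s_i)/2\le 0$: if it is strictly negative, optimality of $\Lambda$ is violated; if it is zero, then $\Lambda'$ is an optimal $(0,1)$-valued cover of weight $n-\nicefrac{1}{2}$, contradicting the integrality of a sum of integer entries.

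Sufficiency in Part~2 proceeds by iteratively eliminating the $1$-entries. The dominance condition combined with $\sum_i(u_i+s_i/2)=n-\nicefrac{1}{2}$ yields $s_i+u_i\le n-1-\sum_{j\ne i}u_j$, so every row of $\Lambda$ has at least one zero, and at least two whenever some other row carries a $1$. Hence, after permuting hyperplanes within each direction, any chosen $1$-entry of $\Lambda$ can be placed in a column whose remaining entries are all zero; removing this column yields a $d\times(n-1)$ table of weight $(n-1)-\nicefrac{1}{2}$ with the same $s_i$'s, which still satisfies the dominance condition and the zero-per-row property. Iterating $U=\sum_i u_i$ times produces $\Lambda^*$ with entries in $\{0,\nicefrac{1}{2}\}$ and a zero in each row, to which Theorem~\ref{allequalconstr}(1) with $m=2$ applies (for $d=2$ the dominance condition itself has no solutions, so the claim is vacuous). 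That theorem identifies $A(\Lambda^*)$ as an extremal matrix of deficiency $\nicefrac{1}{2}$, and applying Theorem~\ref{onereduction} in reverse once per removed column transports the extremality back to $A(\Lambda)$, with $\Lambda$ then serving as its optimal cover of weight $n-\nicefrac{1}{2}$. The main obstacle will be the case analysis in the swap argument for necessity, alongside verifying that the dominance and zero-per-row invariants persist through the iterated reductions.
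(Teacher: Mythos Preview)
Your proposal is correct and follows the same skeleton as the paper's (one-sentence) proof: Theorem~\ref{bigdeltalambda} pins down the entry set, Theorem~\ref{onereduction} strips the $1$-entries, and Theorem~\ref{allequalconstr} handles the resulting two-valued cover, with the dominance condition $s_i<S-s_i$ emerging as exactly the translation of the ``zero in each row'' requirement for the reduced table of order $n-U$ (since $n-U-1=(S-1)/2$). Your detailed bookkeeping for the iterated reduction---in particular the count showing row $i$ has at least $1+\sum_{j\ne i}u_j$ zeros, hence enough to absorb every column removal from other rows---is exactly what the paper leaves implicit.

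The one place you deviate is the \emph{necessity} of the dominance condition: rather than running Theorem~\ref{onereduction} backwards and invoking Theorem~\ref{allequalconstr}(2) on the stripped cover, you give a direct swap argument (round row~$i$'s halves down, all other halves up). This is a clean self-contained alternative; your case analysis is correct, and the ``delicate'' case is handled properly since a single $\nicefrac12$ from row~$i$ plus only $\{0,1\}$-contributions elsewhere forces the integer part to be at least~$1$. One small remark: since $S=2n-1-2U$ is odd while $2s_i$ is even, the equality case $2s_i=S$ cannot occur, so your integrality contradiction in Case~2 is never actually reached---the strict case already suffices.
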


\begin{proof}
The first clause is an immediate corollary of Theorems~\ref{bigdeltalambda} and~\ref{allequalconstr}. For the second clause we additionally use Theorem~\ref{onereduction}.
\end{proof}

\section{To diagonal extremality of extremal matrices} \label{diagextrsection}

In this section, we consider properties and questions related to diagonal extremality and prove that all constructions in the present paper produce diagonally extremal matrices. 

By the definition, each diagonally extremal matrix is extremal.  In Section~\ref{problemsection}, we put forward a conjecture that the sets of extremal and diagonally extremal matrices coincide.

On the way to prove Conjecture~\ref{diagextrcon} it is reasonable to start with its weaker version.

\begin{con} [weakening Conjecture~\ref{diagextrcon}] \label{minorcon}
If $A$ is an extremal matrix and $a_{\alpha} = 0$ then the submatrix $A_\alpha$ contains a polydiagonal.
\end{con}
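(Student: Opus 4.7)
The plan is to reduce the statement to LP duality (Theorem~\ref{multikonig}) by producing an explicit hyperplane cover of $A_\alpha$ of weight at least $n-1$ and arguing that it is optimal. Let $\Lambda$ be an optimal hyperplane cover of $A$, of weight $n - \delta$. Since $a_\alpha = 0$ and $A = A(\Lambda)$ by Theorem~\ref{extrcode}, the index $\alpha$ is covered by $\Lambda$ with weight strictly less than $1$, and Proposition~\ref{extrnescond} sharpens this to $\sum_{i=1}^{d} \lambda_{i,\alpha_i} \le 1 - \delta$. Form the $d \times (n-1)$ table $\Lambda_\alpha$ obtained from $\Lambda$ by deleting the entries $\lambda_{i,\alpha_i}$ from each row. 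The deleted weights never contributed to the coverage of any $\gamma \in \mathrm{supp}(A_\alpha) \subseteq \mathrm{supp}(A)$, whose coordinates all satisfy $\gamma_i \ne \alpha_i$, so $\Lambda_\alpha$ is a valid hyperplane cover of $A_\alpha$ whose weight equals $(n-\delta) - \sum_i \lambda_{i,\alpha_i} \ge n - 1$.

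If one can show that $\Lambda_\alpha$ is an \emph{optimal} cover of $A_\alpha$, the argument closes: by Theorem~\ref{multikonig} the maximum polyplex weight in $A_\alpha$ then equals the weight of $\Lambda_\alpha$, which is at least $n-1$, and since polyplexes in the order-$(n-1)$ matrix $A_\alpha$ all have weight at most $n-1$, equality holds and the optimum is a polydiagonal. So the whole content of the proof lies in arguing that no cover of $A_\alpha$ is strictly lighter than $\Lambda_\alpha$. The natural attack is by contradiction: if $\Lambda^*$ covered $A_\alpha$ with weight $W^* < w(\Lambda_\alpha)$, one would lift it to a cover $\Lambda^{**}$ of $A$ by reinserting $\lambda_{i,\alpha_i}$ into column $\alpha_i$; the lift would have weight $W^* + \sum_i \lambda_{i,\alpha_i} < w(\Lambda) = n - \delta$, contradicting the optimality of $\Lambda$.

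The main obstacle is verifying that such a lift is an honest hyperplane cover of $A$. For indices $\gamma \in \mathrm{supp}(A) \setminus \mathrm{supp}(A_\alpha)$ with $T_\gamma = \{i : \gamma_i = \alpha_i\} \ne \emptyset$, the lifted coverage is $\sum_{i \in T_\gamma} \lambda_{i,\alpha_i} + \sum_{j \notin T_\gamma} \lambda^*_{j,\gamma_j}$, which may drop below $1$ whenever $\Lambda^*$ has shifted mass away from the positions favoured by $\Lambda$. I would try to circumvent this in one of two ways: (a) replace $\lambda^*_{i,j}$ by $\max(\lambda_{i,j}, \lambda^*_{i,j})$ for $j \ne \alpha_i$ to force validity, and bound the resulting weight increase using Propositions~\ref{boundsinrow} and~\ref{edgebound}; or (b) switch to the primal side and construct directly a polydiagonal $K$ of $A \cup \{\alpha\}$ with $k_\alpha = 1$, since then the hyperplane constraint $\sum_{\beta_i = \alpha_i} k_\beta \le 1$ forces $k_\beta = 0$ for every $\beta \ne \alpha$ with some $\beta_i = \alpha_i$, and removing the $\alpha$-entry of $K$ yields a polydiagonal of $A_\alpha$. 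Either route seems to require a sharper description of the polytope of optimal polyplexes and covers of an extremal matrix than the paper has so far developed, which is presumably why the statement is only offered here as a conjecture.
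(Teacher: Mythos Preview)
You are right that the statement is offered only as a conjecture; the paper gives no proof of it in general. Your diagnosis of where the natural dual-side argument stalls is accurate, and the primal-side route you sketch as option~(b)---force $k_\alpha = 1$ in a polydiagonal of $A\cup\{\alpha\}$ via complementary slackness with a perturbed cover---is precisely what the paper carries out in Theorem~\ref{extrminor} to settle the special case in which $\alpha$ (or some index above it in the $\Lambda$-order) is covered with weight exactly $1-\delta$.

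One refinement to your dual-side plan: the target ``$\Lambda_\alpha$ is optimal for $A_\alpha$'' is in general too strong, not merely hard. When $\alpha$ is covered by $\Lambda$ with weight strictly less than $1-\delta$, the table $\Lambda_\alpha$ has weight strictly greater than $n-1$; but $A_\alpha$ has order $n-1$ and so always admits the trivial cover of weight $n-1$, hence $\Lambda_\alpha$ is provably \emph{not} optimal there. Your lifting argument, if it went through as written, would establish optimality of $\Lambda_\alpha$ and thus yield a contradiction even before introducing any hypothetical lighter $\Lambda^*$---another way to see that the lift must fail in those cases. The correct target on the dual side is the weaker assertion that every cover of $A_\alpha$ has weight at least $n-1$; the matching lift would insert weights $\mu_i$ summing to exactly $1-\delta$ (not the original $\lambda_{i,\alpha_i}$) into the deleted positions. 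The coverage defect you identify for indices sharing a hyperplane with $\alpha$ persists for this modified lift too, and that is indeed where the problem remains open.
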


Conjectures~\ref{diagextrcon} and~\ref{minorcon} may turn out to be equivalent if one more conjecture is true.

\begin{con} [connecting Conjectures~\ref{diagextrcon} and~\ref{minorcon}] \label{diagincompcon}
Assume that $A$ has a polydiagonal and $A = A(\Lambda)$ for some hyperplane cover $\Lambda$. Then $A$ contains a diagonal.
\end{con}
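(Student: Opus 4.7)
The plan is to proceed by induction on the order $n$. Because $A = A(\Lambda)$, the observation in Section~1 that every $A(\Lambda)$ is equivalent to a stepped matrix lets us assume, after permuting hyperplanes within each direction, that $A$ itself is stepped; equivalently, $supp(A)$ is an up-set in $\{1,\ldots,n\}^d$ under the coordinatewise partial order. The case $n=1$ is trivial.

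For the inductive step, let $K$ be a polydiagonal of $A$, and let $\Lambda^*$ be an optimal hyperplane cover of $A$, whose weight equals $n$ by Theorem~\ref{multikonig}. By Theorem~\ref{slackuse}, every index $\alpha$ with $k_\alpha>0$ satisfies $\sum_i\lambda^*_{i,\alpha_i}=1$. Fix such an $\alpha$. By construction $A_\alpha = A(\Lambda_\alpha)$, so $A_\alpha$ is again of the form $A(\Lambda')$ and remains stepped, while the restricted table $\Lambda^*_\alpha$ is a hyperplane cover of $A_\alpha$ of weight exactly $n-\sum_i\lambda^*_{i,\alpha_i}=n-1$. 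If $A_\alpha$ contains a polydiagonal, the induction hypothesis supplies a diagonal $D$ of $A_\alpha$, and $D\cup\{\alpha\}$ is a diagonal of $A$, closing the induction.

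The principal obstacle is thus proving that $A_\alpha$ has a polydiagonal, or equivalently (by Theorem~\ref{multikonig}) that the fractional cover number of $A_\alpha$ equals $n-1$; the upper bound is immediate from $\Lambda^*_\alpha$, but the matching lower bound is delicate because $\Lambda^*_\alpha$ need not itself be optimal for $A_\alpha$. I would attempt this by contradiction: suppose $\Lambda'$ is a cover of $A_\alpha$ of weight strictly less than $n-1$, and extend it back to a cover of $A$ by setting $\lambda_{i,\alpha_i}:=\lambda^*_{i,\alpha_i}$, which adds exactly $\sum_i\lambda^*_{i,\alpha_i}=1$ to the weight. The extension trivially covers $A_\alpha$ and the index $\alpha$ itself; the hard case is an index $\beta\in supp(A)$ whose coordinate-agreement set $T=\{l:\beta_l=\alpha_l\}$ is a nonempty proper subset of the directions. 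Here the stepped structure should play a crucial role: the up-closedness of $supp(A)$ produces a shifted partner $\beta'\in supp(A_\alpha)$ obtained by raising each $\beta_l$ with $l\in T$ to a coordinate different from $\alpha_l$, and the cover inequality of $\Lambda'$ on $\beta'$, together with the added weights at $\alpha$, should force the cover inequality on $\beta$. Making this quantitative --- and in particular handling the subtle case when $\alpha_l=n$ for some $l\in T$ so that an upward shift is unavailable --- is the main technical point, and may require a more careful choice of $\alpha$ or a refined reordering of hyperplanes.

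A complementary or alternative route is to seek a polydiagonal $K'$ of $A$ with $k'_\alpha=1$ for some index $\alpha$; subtracting this atomic entry then automatically produces a polydiagonal of $A_\alpha$ of weight $n-1$, closing the induction at once. The existence of such an integer atom in some polydiagonal of a stepped matrix is a multidimensional analogue of Birkhoff's theorem: it fails in general $d$-uniform hypergraphs for $d\geq 3$, which is precisely why the conjecture is nontrivial, but for stepped matrices it is plausible because mass in any polydiagonal can in principle be pushed toward the top index $(n,\ldots,n)$ via local exchanges consistent with the partial order.
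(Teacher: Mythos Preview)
The statement you are attempting is Conjecture~\ref{diagincompcon} in the paper --- it is posed as an \emph{open conjecture}, not a theorem, and the paper gives no proof of it in general. The only case the paper settles is order $n=2$ (Proposition~\ref{diagin2}), by a short direct argument having nothing to do with induction: if $A=A(\Lambda)$ has a polydiagonal but no diagonal, then every antipodal pair has at most one unity entry, hence every unity entry is covered by a minimal such $\Lambda$ with weight at least $1+\varepsilon$, and scaling $\Lambda$ by $1/(1+\varepsilon)$ yields a strictly smaller cover $\Lambda'$ with $A(\Lambda')=A$, contradicting minimality. So there is no ``paper's proof'' to compare your inductive scheme against.

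Your proposal correctly isolates the real obstacle --- showing that $A_\alpha$ has a polydiagonal --- but the extension argument you sketch does not close it, and the gap is structural, not merely technical. You are splicing together two unrelated covers: the weights $\lambda^*_{i,\alpha_i}$ at the deleted hyperplanes come from the optimal cover $\Lambda^*$ of $A$, while the remaining weights come from a hypothetical cover $\Lambda'$ of $A_\alpha$. For an index $\beta\in supp(A)$ agreeing with $\alpha$ exactly on $T$, you need
\[
\sum_{i\in T}\lambda^*_{i,\alpha_i}\;+\;\sum_{i\notin T}\lambda'_{i,\beta_i}\;\geq\;1.
\]
Even if the stepped structure supplies a shifted partner $\beta'\in supp(A_\alpha)$, the cover inequality for $\Lambda'$ at $\beta'$ only gives $\sum_{i\in T}\lambda'_{i,\beta'_i}+\sum_{i\notin T}\lambda'_{i,\beta_i}\geq 1$, and there is no relation whatsoever between $\lambda^*_{i,\alpha_i}$ and $\lambda'_{i,\beta'_i}$: these live in different covers and can be ordered either way. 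Steppedness constrains $supp(A)$, not the numerical values of an arbitrary cover of $A_\alpha$, so it cannot supply the missing inequality. Your alternative route --- producing a polydiagonal with an entry equal to $1$ --- would indeed close the induction, but it is itself a Birkhoff-type statement that fails for general $d$-partite hypergraphs with $d\geq 3$, and no argument is offered for why steppedness restores it. Both routes run into the same wall, which is exactly why the paper leaves the statement as a conjecture.
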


Note that Conjecture~\ref{diagincompcon} claims that Problem~(1) for multidimensional matrices defined by some hyperplane cover and having a polydiagonal has not only a fractional but also integer solution. 
  
We start a proof part of this section with showing that Conjecture~\ref{minorcon} holds for some zero entries of extremal matrices.

\begin{teorema} [supporting Conjecture~\ref{minorcon}] \label{extrminor}
Suppose that $\Lambda$ is an optimal hyperplane cover of an extremal $d$-dimensional matrix $A$ of order $n$ and deficiency $\delta$. If for index $\beta$, where $a_\beta = 0$, there exists an index $\alpha$ covered by $\Lambda$ with weight $1 - \delta$ and such that $\lambda_{i,\beta_i} \leq \lambda_{i,\alpha_i}$ for all $i = 1, \ldots,d$ then the submatrix $A_\beta$ has a polydiagonal. 
\end{teorema}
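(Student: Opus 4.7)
The plan is to produce a polydiagonal of $A_\beta$ in two stages: first exhibit one in the companion submatrix $A_\alpha$, then transport it to $A_\beta$ by an explicit coordinate-swap map determined by the pair $(\alpha,\beta)$.

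For the first stage, I would observe that since $\alpha$ is covered by $\Lambda$ with weight $1-\delta<1$, Theorem~\ref{extrcode} gives $a_\alpha=0$, so extremality of $A$ yields a polydiagonal $K^\alpha$ of $A^\alpha$ of weight $n$. For every direction $i$ the table obtained from $\Lambda$ by adding $\delta$ to $\lambda_{i,\alpha_i}$ is a hyperplane cover of $A^\alpha$ of weight $n$, hence optimal by duality with the polydiagonal. Applying complementary slackness (Theorem~\ref{slackuse}) to $K^\alpha$ against all $d$ of these covers simultaneously forces every $\gamma\in\operatorname{supp}(K^\alpha)$ with $\gamma\neq\alpha$ to satisfy $\sum_j\lambda_{j,\gamma_j}+\delta\,[\gamma_i=\alpha_i]=1$ for each $i$, which is only consistent when $\gamma_i\neq\alpha_i$ for every $i$. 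Thus $\operatorname{supp}(K^\alpha)\subseteq\{\alpha\}\cup A_\alpha$; the restriction $P:=K^\alpha|_{A_\alpha}$ retains all hyperplane sums equal to $1$, and since a polyplex on $A_\alpha$ cannot weigh more than $n-1$, the identity (weight of $P$) $=n-k^\alpha_\alpha$ forces $k^\alpha_\alpha=1$. Hence $P$ is a polydiagonal of $A_\alpha$.

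For the second stage, set $J:=\{i:\alpha_i\neq\beta_i\}$ and define a map $\phi$ on $A_\alpha$ by $\phi(\gamma)_i=\alpha_i$ if $i\in J$ and $\gamma_i=\beta_i$, and $\phi(\gamma)_i=\gamma_i$ otherwise. A direct coordinatewise inspection shows $\phi(\gamma)_i\neq\beta_i$ for every $i$ (on $i\notin J$ because $\alpha_i=\beta_i$ and $\gamma\in A_\alpha$ prevents $\gamma_i=\alpha_i$; on swapped $i\in J$ by construction; on non-swapped $i\in J$ by the absence of the swap condition). The hypothesis $\lambda_{i,\beta_i}\leq\lambda_{i,\alpha_i}$ then gives $\sum_i\lambda_{i,\phi(\gamma)_i}-\sum_i\lambda_{i,\gamma_i}=\sum_{i\in J,\,\gamma_i=\beta_i}(\lambda_{i,\alpha_i}-\lambda_{i,\beta_i})\geq 0$, so $\phi(\gamma)$ remains in the support of $A$ and therefore in the support of $A_\beta$. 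Injectivity of $\phi$ is immediate because two preimages with a common swap set coincide off the swap set, and any discrepancy in their swap sets would force $\gamma_i=\alpha_i$ for one of them, contradicting $A_\alpha$-membership. One then defines $K^*_{\phi(\gamma)}:=p_\gamma$ and $0$ off the image.

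It remains to verify that $K^*$ is a polyplex on $A_\beta$ of weight $n-1$. The total weight equals $\sum_{\gamma\in A_\alpha}p_\gamma=n-1$, so the only issue is the hyperplane sums. Fix a hyperplane $\Gamma_{i,j}$ of $A_\beta$ (so $j\neq\beta_i$) and split into two cases: if $j\notin\{\alpha_i,\beta_i\}$, then the swap cannot produce $\phi(\gamma)_i=j$ except through $\gamma_i=j$, and the sum equals the $\Gamma_{i,j}$-sum of $P$, namely $1$; if $j=\alpha_i$ (which forces $i\in J$), then $\phi(\gamma)_i=\alpha_i$ requires $\gamma_i=\beta_i$ (the option $\gamma_i=\alpha_i$ being excluded by $A_\alpha$-membership), so the sum equals the $\Gamma_{i,\beta_i}$-sum of $P$, also $1$. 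The main obstacle is exactly this bookkeeping when some coordinates satisfy $\alpha_i=\beta_i$: the swap has to be confined to $J$, and the hyperplane-sum analysis must treat $j=\alpha_i$ separately from $j\notin\{\alpha_i,\beta_i\}$. The componentwise inequality in the hypothesis is precisely what keeps the swapped index inside $\operatorname{supp}(A)$, making the transport well-defined.
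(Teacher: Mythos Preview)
Your argument is correct and follows the same two--stage strategy as the paper: first show that $A_\alpha$ itself has a polydiagonal, then pass to $A_\beta$.

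In Stage~1 the paper is slightly more economical: it modifies $\Lambda$ in a \emph{single} direction, obtains one optimal cover $\Lambda'$ of the flipped matrix, and argues by contradiction that $k_\alpha=1$ (any other positive entry in $\Gamma_{1,\alpha_1}$ would be covered by $\Lambda'$ with weight $\geq 1+\delta$, violating complementary slackness). From $k_\alpha=1$ the rest of the polydiagonal is automatically supported on $A_\alpha$. Your route via all $d$ modified covers reaches the same conclusion and is equally valid; it just does a bit more work than necessary.

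In Stage~2 the paper compresses everything into the single sentence ``Since the submatrix $A_\beta$ contains the submatrix $A_\alpha$\ldots'', leaving the identification of the two order--$(n-1)$ index sets implicit. Your coordinate--swap map $\phi$ (send $\beta_i\mapsto\alpha_i$ on $J$, identity elsewhere) is precisely the bijection that makes this containment rigorous, and your verification that $\phi$ carries $P$ to a polydiagonal of $A_\beta$ spells out exactly why the hypothesis $\lambda_{i,\beta_i}\leq\lambda_{i,\alpha_i}$ is needed. So here you are more explicit than the paper, not different from it.
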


\begin{proof}
Since the support of submatrix $A_\beta$ contains the support of the submatrix $A_\alpha$, if $A_{\alpha}$ has a polydiagonal then $A_{\beta}$ has a polydiagonal too. So let us prove that there is a polydiagonal in the submatrix $A_\alpha$.

Let $B$ denote the matrix obtained from $A$ by changing a zero entry $a_\alpha$ to unity. 
Since $A$ is an extremal matrix, the matrix $B$ has a polydiagonal. Let $\Lambda'$ be a hyperplane cover of the matrix $B$ that coincides with $\Lambda$ in all entries except for $\lambda_{1,\alpha_1}' = \lambda_{1,\alpha_1} + \delta$. Note that $\Lambda'$ has the weight $n$, so it is an optimal hyperplane cover of $B$.

Extremality of the matrix $A$ implies that for every polydiagonal $K$ in $B$ it holds that $k_{\alpha} >0$. Assume that for some polydiagonal $K$ in $B$ we have $k_{\alpha} < 1$. Then there is another index $\gamma$ such that $k_\gamma >0$ in hyperplane $\Gamma_{1, \alpha_1}$. By Theorem~\ref{slackuse}, index $\gamma$ is covered with weight $1$ by every optimal hyperplane cover of $B$. But the hyperplane cover $\Lambda'$ covers it with weight at least $1+ \delta$: a contradiction.
\end{proof}

Next, we prove that Constructions~\ref{extrconstrdim} and \ref{extrconstrorder} (increasing dimension and order of extremal matrices) preserve the property of diagonal extremality.

\begin{teorema} [supporting Conjecture~\ref{diagextrcon}] \label{diagextrconstr}
Let $A$ be a diagonally extremal matrix, and let $B$ be an extremal matrix obtained via Construction~$\ref{extrconstrdim}$ or~$\ref{extrconstrorder}$ from $A$. Then the matrix $B$ is also diagonally extremal.
\end{teorema}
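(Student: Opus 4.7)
The plan is to verify diagonal extremality of $B$: for every $\gamma$ with $b_\gamma = 0$, the submatrix $B_\gamma$ must contain a diagonal, since the absence of polydiagonals in $B$ follows from its extremality. I handle the two constructions separately, reducing each to the diagonal extremality of $A$.

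For Construction~\ref{extrconstrdim}, any $\gamma$ with $b_\gamma = 0$ projects onto $\gamma' = (\gamma_1, \ldots, \gamma_{d-1})$ with $a_{\gamma'} = 0$, since every direction-$d$ hyperplane of $B$ equals $A$. Diagonal extremality of $A$ provides a diagonal $D' = \{(d_1^{(k)}, \ldots, d_{d-1}^{(k)}) : k = 1, \ldots, n-1\}$ of $A_{\gamma'}$. Choose any bijection $\pi : \{1, \ldots, n-1\} \to \{1, \ldots, n\} \setminus \{\gamma_d\}$ and lift $D'$ to $D = \{(d_1^{(k)}, \ldots, d_{d-1}^{(k)}, \pi(k))\}$. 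Each lifted index has $B$-value $1$ (inherited from the corresponding $A$-value $1$), lies in $B_\gamma$, and the indices together cover every direction-$i$ position of $B_\gamma$ exactly once.

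For Construction~\ref{extrconstrorder}, set $\beta = (n+1, \ldots, n+1)$; the proof of Theorem~\ref{extrconstrorderproof} gives $B_\beta = A$ and $b_\beta = 1$. Let $\{i_1, \ldots, i_l\}$ be the directions with $\gamma_{i_j} = n+1$. In Situation A, which covers sub-cases (1), (2), and also sub-case (3) when $\gamma$ avoids the new hyperplane of the $(1-\delta)$-direction, the proof of Theorem~\ref{extrconstrorderproof} produces an index $\sigma$ (with $\sigma_{i_j} = \alpha_{i_j}$ for $\alpha$ the index chosen in the corresponding sub-case, and $\sigma_k = n+1$ otherwise) such that $B_\sigma$ is isomorphic to $A$ under a natural reindexing. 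Under this isomorphism $\gamma$ maps to some $\gamma^* \in A$ with $a_{\gamma^*} = 0$, so diagonal extremality of $A$ supplies a diagonal $D^*$ of $A_{\gamma^*}$. Setting $\tau_i = n+1$ for $i \notin \{i_1, \ldots, i_l\}$ and $\tau_{i_j} = \alpha_{i_j}$, a direct sub-case-by-sub-case computation shows $\sum_i \lambda'_{i, \tau_i} = 1$, so $b_\tau = 1$; then $D^* \cup \{\tau\}$ is a diagonal of $B_\gamma$.

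Situation B is the remaining case: sub-case (3) with $\gamma_{i_0} = n+1$, where $i_0$ is the direction carrying the $(1-\delta)$-entry. By the proof of Theorem~\ref{extrconstrorderproof}, $B_\gamma = A(\Lambda'')$ where $\Lambda''$ arises from $\Lambda$ by raising a single $0$-entry in row $j_0$ to $\delta$, and $B_\gamma \supsetneq A$. Pick any $\alpha^* \in B_\gamma \setminus A$; then $a_{\alpha^*} = 0$, so diagonal extremality gives a diagonal $D^{**}$ of $A_{\alpha^*}$, and since $A \subseteq B_\gamma$ and $\alpha^* \in B_\gamma$, the set $D^{**} \cup \{\alpha^*\}$ is a diagonal of $B_\gamma$. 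The main obstacle is justifying the strict inclusion $B_\gamma \supsetneq A$ in Situation B: the attached $\delta$-entry of $\Lambda'$ must be essential for the optimality of $\Lambda'$, and this, combined with Proposition~\ref{extrnescond} and the natural reindexing in direction $j_0$, forces the existence of an $A$-index of coverage exactly $1-\delta$ in the hyperplane $\Gamma_{j_0, \gamma_{j_0}}$; such an index lies in $B_\gamma \setminus A$ and provides the needed $\alpha^*$.
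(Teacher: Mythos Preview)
Your argument is correct and follows the paper's approach: both proofs trace through the case analysis of Theorem~\ref{extrconstrorderproof} and upgrade each polydiagonal-producing step to a diagonal by invoking the diagonal extremality of $A$. You actually go beyond the paper in attempting to justify the strict inclusion $B_\gamma \supsetneq A$ in Situation~B (which the paper simply asserts in the proof of Theorem~\ref{extrconstrorderproof}); your essentiality argument is a bit loose there, and a cleaner route is via Proposition~\ref{weightone} applied to the hyperplane $\Gamma_{j,p}$ of $A$ where row~$j$ carries its $\delta$-entry, giving an index $\mu$ with $\sum_{i\neq j}\lambda_{i,\mu_i}=1-\delta$, so that $\nu$ defined by $\nu_j=\gamma_j$ and $\nu_i=\mu_i$ otherwise has $\Lambda$-coverage exactly $1-\delta$ and hence lies in $B_\gamma\setminus A$.
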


\begin{proof}
If the matrix $B$ is obtained by Construction~\ref{extrconstrdim}, then all its hyperplanes of some direction are exactly the matrix $A$. If we assume the diagonal extremality of $A$, changing a zero entry of $B$ to unity gives a diagonal in some hyperplane of $B$. It only remains to expand this diagonal to the whole matrix $B$.

Suppose now that the matrix $B$ is given by Construction~\ref{extrconstrorder}. Returning to the proof of Theorem~\ref{extrconstrorderproof}, note that in Clause 1 and 2 each polydiagonal, arising in $B$ by adding index $\gamma$ to the support, consists of some $1$-entry $b_{\sigma}$ and a polydiagonal in the submatrix $B_{\sigma} = A$ such that $\gamma \in A$. In Clause 3  for some cases the submatrix $B_{\gamma}$ strictly contains matrix $A$. So, if such operation gives a diagonal in the matrix $A$, then a diagonal also appears in the matrix $B$.
\end{proof}

To prove that all extremal matrices with two-value optimal hyperplane covers are diagonally extremal, we use the Gale--Ryser theorem.  Given two partitions $r = (r_1,\ldots,r_n)$ and $s = (s_1,\ldots,s_m)$ of the same integer number, partition $r$ \textit{majorizes}  $s$ if $r_1 +\cdots +r_k \geq s_1 +\cdots+s_k$ for all $k$. If $s$  is a nonincreasing sequence of natural numbers, then $s^*$  denotes the sequence, where $s^*_i$ is the number of $j$ such that $s_j \geq i$. 
  
\begin{teorema}[Gale~\cite{gale.flawsnet}, Ryser~\cite{ryser.01matr}] \label{galeryser}
Let $r = (r_1,\ldots,r_n)$ and $s = (s_1,\ldots,s_m)$ be two nonincreasing sequences of nonnegative integers each summing to a common value. There exists a $(0,1)$-matrix of size $n \times m$ with row sums $r$ and column sums $s$ if and only if $s^*$ majorizes $r$.
\end{teorema}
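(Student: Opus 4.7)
The plan is to prove the two directions of the equivalence separately: necessity by a short double-counting identity, and sufficiency by an inductive greedy construction (with a max-flow argument held in reserve).

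For necessity, assume a $(0,1)$-matrix $M$ of size $n \times m$ with row sums $r$ and column sums $s$ exists. I would count the number of ones in the first $k$ rows in two ways. Summing along rows gives $r_1 + \cdots + r_k$; summing along columns gives at most $\sum_{j=1}^m \min(k, s_j)$, since column $j$ contains $s_j$ ones in total but can contribute at most $k$ of them to the first $k$ rows. The identity
$$\sum_{j=1}^m \min(k,s_j) \;=\; \sum_{i=1}^k s^*_i,$$
obtained by writing $\min(k,s_j)=|\{i : 1 \le i \le k,\ s_j \ge i\}|$ and interchanging summations, then yields the majorization $s^*_1 + \cdots + s^*_k \ge r_1 + \cdots + r_k$ for every $k$.

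For sufficiency, I would proceed by induction on $n$. The base case $n=1$ is immediate: the hypothesis $s^*_1 \ge r_1$ together with equality of totals forces exactly $r_1$ of the $s_j$ to equal $1$ and the rest to equal $0$. For the inductive step, I construct the first row greedily as the $(0,1)$-vector $v$ with $r_1$ ones placed in the columns carrying the $r_1$ largest values of $s_j$ (with ties broken arbitrarily). I then delete this row, replace $s$ by $s - v$ re-sorted, and apply the induction hypothesis to the reduced instance with row sums $(r_2,\ldots,r_n)$.

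The crux of the argument, and the step I expect to be the main obstacle, is showing that this greedy reduction preserves the majorization hypothesis, i.e.\ that the new conjugate partition $(s-v)^*$ still majorizes $(r_2,\ldots,r_n)$. The cleanest route uses the Young-diagram picture: the greedy $v$ removes the rightmost cell from each of the top $r_1$ rows of the diagram of $s$, and a short case analysis on whether $k < r_1$ or $k \ge r_1$ yields the required partial-sum inequalities for $(s-v)^*$ from the shifted inequality $s^*_1 + \cdots + s^*_{k+1} \ge r_1 + \cdots + r_{k+1}$. Should this bookkeeping turn out to be awkward, I would fall back on Gale's original network-flow proof: encode the problem as a transportation network (source-to-row arcs of capacity $r_i$, unit arcs between rows and columns, column-to-sink arcs of capacity $s_j$) and invoke integral max-flow / min-cut; the tightest Hall-type condition on subsets of columns is always achieved by the ``top-$k$'' subsets, and this reduces feasibility precisely to the majorization inequalities.
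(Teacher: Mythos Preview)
The paper does not prove this theorem: it is quoted as a classical result (with references to Gale and Ryser) and used as a black box in the proof of the next theorem on diagonal extremality. There is therefore no ``paper's own proof'' to compare against.

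Your argument is the standard one and is correct in outline. The necessity direction is complete as you have written it. For sufficiency, the greedy induction you describe is precisely Ryser's original construction, and the step you correctly identify as the crux---that after deleting the greedy first row the conjugate of the reduced column-sum sequence still majorizes $(r_2,\dots,r_n)$---does go through. One caution: your Young-diagram description (``remove the rightmost cell from each of the top $r_1$ rows'') is only literally a Young diagram when there are no ties at level $r_1$; with ties you must re-sort, and the cleanest bookkeeping is to verify directly that the new conjugate satisfies $(s')^*_i \ge s^*_i - 1$ for all $i$ and $(s')^*_i = s^*_i$ for $i > r_1$ would be wrong in general---rather, one shows $\sum_{i\le k}(s')^*_i = \sum_{i\le k} s^*_i - \min(k \text{'s worth of removed cells})$ and compares with $\sum_{i=2}^{k+1} r_i$ using the original majorization at index $k+1$ together with $r_1 \ge r_i$. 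Your ``case analysis on $k<r_1$ versus $k\ge r_1$'' is exactly the right split for this. The max-flow fallback you mention is Gale's proof and is an equally valid alternative.
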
 

\begin{teorema} [supporting Conjecture~\ref{diagextrcon}] \label{allequaldiagextr}
Let $A$ be an extremal matrix defined by a hyperplane cover $\Lambda$ whose entries have only two values. Then the matrix $A$ is diagonally extremal. 
\end{teorema}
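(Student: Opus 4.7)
By Theorem~\ref{allequalconstr}, after a permutation of hyperplanes in each direction we may assume that $\lambda_{i,j} = \nicefrac{1}{m}$ for $j \leq t_i$ and $\lambda_{i,j} = 0$ for $j > t_i$, where $m < d$, $t_i \leq n - 1$, and $\sum_i t_i = mn - 1$; the deficiency of $A$ is then $\nicefrac{1}{m}$. Setting $S_\beta := \{i : \beta_i \leq t_i\}$, an index $\beta$ lies in $supp(A)$ iff $|S_\beta| \geq m$. Fix any zero entry $\alpha$; then $|S_\alpha| \leq m - 1$. Define $t_i' := t_i - 1$ if $i \in S_\alpha$ and $t_i' := t_i$ otherwise. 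Then $t_i' \leq n - 1$ and $\sum_i t_i' = mn - 1 - |S_\alpha| \geq m(n-1)$.

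Reindex $A_\alpha$ so that the first $t_i'$ positions in direction $i$ correspond to the surviving ``small'' hyperplanes. A diagonal in $A_\alpha$ is then a tuple of permutations $\pi_1, \ldots, \pi_d$ of $\{1, \ldots, n-1\}$ such that $|\{i : \pi_i(j) \leq t_i'\}| \geq m$ for every $j$. Writing $v_{i,j} := 1$ when $\pi_i(j) \leq t_i'$ and $v_{i,j} := 0$ otherwise, the diagonal problem becomes equivalent to the construction of a $d \times (n-1)$ $(0,1)$-matrix $V = (v_{i,j})$ with row sums exactly $t_i'$ and every column sum at least $m$; conversely, from any such $V$ one recovers the $\pi_i$ by sending the $t_i'$ columns in which $v_{i,j} = 1$ bijectively onto $\{1, \ldots, t_i'\}$ and the remaining columns onto $\{t_i' + 1, \ldots, n - 1\}$.

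I would now invoke the Gale--Ryser theorem (Theorem~\ref{galeryser}) with balanced column sums. Writing $\sum_i t_i' = (m + q)(n-1) + r$ with $0 \leq r < n - 1$, put $c_j = m + q + 1$ for $j \leq r$ and $c_j = m + q$ for $j > r$. The sandwiching $m(n-1) \leq \sum_i t_i' \leq d(n-1)$ together with $d > m$ yields each $c_j \in [m, d]$. The conjugate partition satisfies $c^*_i = n - 1$ for $i \leq m + q$ and $c^*_{m+q+1} = r$, so the required majorization $\sum_{i \leq k} c^*_i \geq \sum_{i \leq k} t'_{(i)}$ (where $t'_{(i)}$ denotes the $t_i'$ sorted non-increasingly) holds: for $k \leq m + q$ the left side equals $k(n-1)$ while the right side is at most $k(n-1)$ because $t_i' \leq n - 1$, and for larger $k$ the left side has already reached the common total $\sum_i t_i'$.

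The step I expect to be the main obstacle is precisely this Gale--Ryser verification. The hypothesis $\sum_i t_i' \geq m(n-1)$ does not by itself control partial sums of $t'$, and a careless distribution of the excess $\sum_i t_i' - m(n-1)$ could violate either the lower bound $m$ on the column sums or, via the conjugate partition, the row-cap $n - 1$. Only the balanced choice of $c_j$, which leverages both $t_i' \leq n - 1$ and the strict inequality $d > m$ supplied by Theorem~\ref{allequalconstr}, keeps the majorization intact. Every other ingredient --- the stepped normalization of $\Lambda$, the translation to a two-dimensional degree-sequence problem, and the final reconstruction of the permutations $\pi_i$ from $V$ --- is routine given the machinery already developed in the paper.
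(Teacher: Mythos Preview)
Your argument is correct and follows essentially the same route as the paper: translate the existence of a diagonal in $A_\alpha$ into a two--dimensional degree--sequence problem and settle it with Gale--Ryser. The only real difference is a matter of bookkeeping. The paper first reduces to the case where $\alpha$ is covered with weight exactly $1-\nicefrac{1}{m}$ (that is, $|S_\alpha|=m-1$), by observing that for any $\alpha$ with $|S_\alpha|<m-1$ one can find $\beta$ with $S_\alpha\subset S_\beta$, $|S_\beta|=m-1$, and $A_\beta$ contained entrywise in $A_\alpha$. After this reduction $\sum_i t_i'=m(n-1)$ on the nose, so one may take all column sums equal to $m$ and the Gale--Ryser majorization is immediate from $t_i'\le n-1$. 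You instead handle an arbitrary $\alpha$ directly, absorbing the surplus $\sum_i t_i'-m(n-1)\ge 0$ into a balanced column--sum vector $c\in\{m+q,m+q+1\}^{n-1}$; your verification of the majorization is fine, and the check $c_j\le d$ (needed so that the $(0,1)$-matrix has at most $d$ ones per column) follows as you say from $\sum_i t_i'\le d(n-1)$. Your worry in the last paragraph is unfounded: partial sums of the sorted $t'$ are controlled solely by $t_i'\le n-1$, not by the total, so the majorization holds for every $k\le m+q$ automatically.
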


\begin{proof}
By Theorem~\ref{allequalconstr}, we may assume that all entries of $\Lambda$ are equal to $0$ or $\nicefrac{1}{m}$ and the deficiency $\delta$ is  $\nicefrac{1}{m}$. 
Let $\alpha$ be an index of $A$ covered by $\Lambda$ with weight less than $1$. As in the proof of Theorem~\ref{allequalconstr}, without loss of generality, we may suppose that $\alpha$ is covered with weight $1-\nicefrac{1}{m}$ by $\Lambda$.

Let $\Lambda_\alpha$ be a hyperplane cover of the submatrix $A_\alpha$ obtained from $\Lambda$ by deleting all hyperplanes containing index $\alpha$.   Note that the existence of a diagonal in $A_\alpha$ is equivalent to that there is rearranging entries  of the $\Lambda_\alpha$ within its rows that put $\Lambda_\alpha$ to some table with all column sums at  least $1$. By the construction, $\Lambda_\alpha$ is a $d \times (n-1)$-table with exactly $m(n-1)$ nonzero entries equal to $\nicefrac{1}{m}$.  If we denote by $r_i$ the number of nonzero entries of $\Lambda_\alpha$ in the $i$-th row and by $s_j = m$ the required number of positive entries in the $j$-column, then from Theorem~\ref{galeryser} we deduce that there exists a $(0,1)$-matrix with row sums $r$ and column sums $s$.  Thus, for every $a_\alpha =0$ the submatrix $A_\alpha$ contains a diagonal.

\end{proof}

Theorems~\ref{diagextrconstr} and~\ref{allequaldiagextr} implies that all extremal matrices of big deficiencies are diagonally extremal.

\begin{teorema} [supporting Conjecture~\ref{diagextrcon}]
Every extremal matrix of deficiency $1$ or $\nicefrac{1}{2}$ is diagonally extremal.
\end{teorema}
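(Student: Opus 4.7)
The plan is to dispose of $\delta = 1$ instantly and then handle $\delta = \nicefrac{1}{2}$ by a rearrangement of the restricted hyperplane cover. For $\delta = 1$, Theorem~\ref{bigdeltalambda}(1) says every $\lambda_{i,j} \in \{0,1\}$, so $\Lambda$ is a two-value cover; Theorem~\ref{allequaldiagextr} then yields diagonal extremality directly.

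For $\delta = \nicefrac{1}{2}$, Theorem~\ref{bigdeltalambda}(2) puts the entries of $\Lambda$ into $\{0, \nicefrac{1}{2}, 1\}$, and Theorem~\ref{biglambdachar}(2) ensures $h_i < \sum_{i' \neq i} h_{i'}$ for each row $i$, where $h_i$ denotes the number of $\nicefrac{1}{2}$-entries in row $i$ and $H := \sum_i h_i$ (necessarily odd). Fix any index $\alpha$ with $a_\alpha = 0$; Propositions~\ref{extrnescond} and~\ref{extrcode} force the $\Lambda$-coverage of $\alpha$ into $\{0, \nicefrac{1}{2}\}$. Following the strategy of Theorem~\ref{allequaldiagextr}, the existence of a diagonal in $A_\alpha$ is equivalent to the existence of a row-permutation of $\Lambda_\alpha$ whose every column sums to at least $1$, so I aim to construct such a rearrangement.

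I split by the coverage of $\alpha$. If the coverage is $\nicefrac{1}{2}$, then the total weight of $\Lambda_\alpha$ equals $n-1$, forcing every column to sum to exactly $1$: either a lone $1$-entry or an unordered pair of $\nicefrac{1}{2}$-entries in distinct rows. The $1$-placements are unconstrained, and pairing up the $\nicefrac{1}{2}$s reduces to realizing a loop-free multigraph on $d$ vertices with $(H-1)/2$ edges and row-degrees $h_i$ (or $h_{i_0} - 1$ on the distinguished row); the hypothesis $h_i \leq (H-1)/2$ is precisely the Hakimi bound for realizability of such a multigraph. If the coverage is $0$, the slack is $\nicefrac{1}{2}$, so exactly one ``special'' column sums to $\nicefrac{3}{2}$ while the rest sum to $1$: when $U := \sum_i u_i \geq 1$, take the special column to be a $(1, \nicefrac{1}{2})$-pair in two distinct rows; when $U = 0$, take it to be three $\nicefrac{1}{2}$s on three distinct rows. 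In either sub-case, the residual multigraph problem again satisfies the realizability bound.

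The main obstacle is the $U = 0$ sub-case, where the residual degree bound tightens to $h_i \leq (H-3)/2$. I would handle it by choosing the special triple $\{i_1, i_2, i_3\}$ to absorb every ``tight'' row (with $h_i \geq (H-1)/2$): at most three such rows exist, since otherwise their combined $h$-count would exceed $H$; meanwhile the hypothesis rules out the $\nicefrac{1}{2}$-mass being concentrated on one or two rows, so at least three rows with $h_i \geq 1$ are always available for the triple. Once a valid rearrangement exists, the ``main'' diagonal $(1, \ldots, 1), \ldots, (n-1, \ldots, n-1)$ lies in the support of the rearranged matrix, and matrix equivalence carries this back to a genuine diagonal of $A_\alpha$.
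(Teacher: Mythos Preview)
Your argument is correct, but it takes a different and considerably more hands-on route than the paper for the case $\delta = \nicefrac{1}{2}$. The paper's proof is two lines: it invokes Theorems~\ref{diagextrconstr} and~\ref{allequaldiagextr}. Concretely, Theorem~\ref{onereduction} says that attaching a $1$-entry (Construction~\ref{extrconstrorder}(2)) is reversible at the level of extremality, so one strips all $1$-entries from $\Lambda$ to obtain a smaller extremal matrix whose optimal cover is genuinely two-valued $\{0,\nicefrac{1}{2}\}$; Theorem~\ref{allequaldiagextr} then gives diagonal extremality of that matrix, and Theorem~\ref{diagextrconstr} transports diagonal extremality back up through each re-attachment of a $1$-entry.

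Your approach instead keeps the three-value cover $\{0,\nicefrac{1}{2},1\}$ intact and argues directly via the rearrangement criterion, splitting on whether the deleted index has coverage $0$ or $\nicefrac{1}{2}$ and reducing the placement of the $\nicefrac{1}{2}$-entries to a loopless-multigraph realizability problem (Hakimi's condition) rather than to Gale--Ryser as in the paper. The paper's route is shorter and more modular, since all the combinatorics is already encapsulated in Theorem~\ref{allequaldiagextr}; your route avoids the reduce--rebuild detour and is self-contained, at the cost of the extra case analysis (in particular the $U=0$ sub-case with the choice of a triple absorbing the tight rows). Note that the multigraph realizability criterion you rely on is not stated in the paper, so if you were writing this up you would want to cite it explicitly.
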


Let us look in more details at the problem of a diagonal extremality for a general extremal matrix $A$ defined by an optimal hyperplane cover $\Lambda$. Following the proof of Theorem~\ref{allequaldiagextr}, the matrix $A$ is diagonally extremal if and only if for each index $\alpha$ with $a_\alpha = 0$ the entries in the corresponding table $\Lambda_\alpha$ can be rearranged within their rows in such a way so that the sum of entries in each column is at least $1$. After an appropriate normalization, all entries of $\Lambda_\alpha$ may be considered to be integers.

Consequently, for effective determining the diagonal extremality by a given optimal hyperplane cover of an extremal matrix, it would be convenient to have a generalization of the Gale--Ryser theorem for integer nonnegative matrices or at least some sufficient conditions on the existence of such matrices with prescribed column sums. 

\begin{problem} [related to Conjecture~\ref{diagextrcon}] \label{diagtableprob}
What are necessary and sufficient conditions on sequences of integers $r^1, \ldots, r^m$ and $s$ for the existence of a matrix $M$ such that the $i$-th row of $M$ has exactly $r^j_i$ entries equal $j$ and the sum of entries in the $l$-th column of $M$ equals $s_l$? The similar question for matrices with column sums at least $m$?
\end{problem}

Since every nonnegative integer matrix can be considered as a biadjacency matrix of a bipartite multigraph, we may state this problem in terms of valid bipartite multigraph degree sequences.

\begin{problem} [equivalent to Problem~\ref{diagtableprob}]
Assume that we are given the sets of multiplicities of all edges for one part of vertices and degrees for another part. Does there exist a bipartite multigraph satisfying these degree and multiplicity conditions?
\end{problem}

\section{Extremal matrices of small orders}

The main aim of this section is to examine multidimensional extremal matrices of orders $2$ and $3$ in view of the questions and conjectures of Section~\ref{problemsection}. For this purpose we introduce more definitions.

Firstly, we note that the support of each diagonal in a multidimensional matrix of order $2$ consists of exactly two indices, namely indices $\alpha$ and $\overline{\alpha}$ that are said to be \textit{antipodal}. Let us call a multidimensional $(0,1)$-matrix $A$ of order $2$ \textit{antipodal} if for each pair of antipodal indices $\alpha$ and $\overline{\alpha}$ entries $a_{\alpha}$ and $a_{\overline{\alpha}}$  are different. Note that antipodal matrices are exactly the graph of self-dual boolean functions.

By the definition, every antipodal matrix does not have diagonals, and adding any index to its support produces a diagonal. So all antipodal matrices with no polydiagonals are extremal. In fact, there are no other extremal matrices of order $2$.

\begin{teorema} [supporting Conjecture~\ref{diagextrcon}] \label{order2char}
Extremal multidimensional matrices of order $2$ are exactly antipodal matrices without polydiagonals. 
\end{teorema}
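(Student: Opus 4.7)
The paragraph preceding the statement already handles the easy direction (antipodal without polydiagonals implies extremal), so the plan is to prove the converse: assume $A$ is an extremal $d$-dimensional $(0,1)$-matrix of order $2$ and show that for every antipodal pair $\{\alpha,\overline\alpha\}$ exactly one of $a_\alpha,a_{\overline\alpha}$ equals $1$. The case where both entries equal $1$ is immediate: two antipodal ones form an integer diagonal, hence a polydiagonal, contradicting the definition of an extremal matrix. So the whole content of the argument is to rule out the case $a_\alpha=a_{\overline\alpha}=0$.

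To do this I would invoke the structural results on optimal hyperplane covers already proved in Section~\ref{condsection}. Let $\Lambda$ be an optimal cover of $A$ and let $\delta>0$ be the deficiency, so the weight of $\Lambda$ is $2-\delta$. A simple identity special to order $n=2$ is that the cover weights of any antipodal pair sum exactly to the total weight of $\Lambda$:
$$\sum_{i=1}^d \lambda_{i,\alpha_i}+\sum_{i=1}^d \lambda_{i,\overline\alpha_i}=\sum_{i=1}^d(\lambda_{i,1}+\lambda_{i,2})=2-\delta,$$
because $\{\alpha_i,\overline\alpha_i\}=\{1,2\}$ exhausts every column of $\Lambda$.

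Now I assume $a_\alpha=a_{\overline\alpha}=0$ and derive a contradiction. By Theorem~\ref{extrcode} we have $A=A(\Lambda)$, so neither $\alpha$ nor $\overline\alpha$ is covered with weight $\geq 1$. Proposition~\ref{extrnescond} forbids cover weights strictly between $1-\delta$ and $1$, so both $\alpha$ and $\overline\alpha$ must be covered with weight at most $1-\delta$. Adding the two inequalities and comparing with the identity above yields $2-\delta\leq 2-2\delta$, i.e.\ $\delta\leq 0$, which contradicts $\delta>0$ (Proposition~\ref{weightextr}). Hence $A$ must be antipodal.

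I do not foresee a real obstacle: the whole proof collapses to a three-line manipulation of cover weights. The crucial observation — and the only place where $n=2$ enters essentially — is that at this order an antipodal pair touches every single hyperplane, which is what makes the sum of cover weights of $\alpha$ and $\overline\alpha$ equal the entire weight of $\Lambda$, and hence forces the deficiency to vanish when both entries are zero.
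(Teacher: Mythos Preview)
Your proof is correct and follows essentially the same route as the paper: both arguments use the key order-$2$ identity that the cover weights of an antipodal pair sum to the total weight $2-\delta$ of an optimal hyperplane cover, combine it with Theorem~\ref{extrcode} and Proposition~\ref{extrnescond}, and derive a contradiction from the assumption $a_\alpha=a_{\overline\alpha}=0$. The only cosmetic difference is that the paper phrases the contradiction as ``both weights lie strictly between $1-\delta$ and $1$'' whereas you sum the two inequalities to force $\delta\le 0$; these are two ways of reading the same arithmetic.
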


\begin{proof}
Suppose that there exists an extremal but not antipodal multidimensional matrix $A$ of order $2$ and deficiency $\delta$. Then there exists a pair of antipodal indices $\alpha$ and $\overline{\alpha}$ such that $a_{\alpha} = a_{\overline{\alpha}} = 0$. 

Let $\Lambda$ be some optimal hyperplane cover of the matrix $A$.
Note that the weight $2 - \delta$ of $\Lambda$ is equal to the sum of cover weights of any two antipodal indices. By Theorem~\ref{extrcode}, each of $\alpha$ and $\overline{\alpha}$ is covered by $\Lambda$ with weight less than $1$. Then both of them are covered by $\Lambda$ with weights strictly between $1 - \delta$ and $1$: a contradiction to Proposition~\ref{edgebound}.
\end{proof}

An immediate corollary of Theorem~\ref{order2char} is that every extremal matrix of order $2$ is diagonally extremal.
We would like to note that an antipodal matrix (even having a stepped structure) may contain a polydiagonal. The smallest example is the following $6$-dimensional matrix of order $2$, where the support of the polydiagonal is bold.
$$A = \left(\begin{array}{cc} 
\begin{array}{cc|cc}
1 & 1 & 1 & 1 \\ 1 & 1 & 1 & 1 \\ \hline
1 & 1 & 1 & 1 \\ 1 & 1 & 1 & 1 \\
\end{array} &
\begin{array}{cc|cc}
1 & 1 & 1 & \mathbb{1} \\ 1 & 0 & \mathbb{1} & 0 \\ \hline
1 & \mathbb{1} & 0 & 0 \\ \mathbb{1} & 0 & 0 & 0 \\
\end{array} \\
~ & ~ \\
\begin{array}{cc|cc}
1 & 1 & 1 & 0 \\ 1 & \mathbb{1} & 0 & 0 \\ \hline
1 & 0 & \mathbb{1} & 0 \\ 0 & 0 & 0 & 0 
\end{array} &
\begin{array}{cc|cc}
0 & 0 & 0 & 0 \\ 0 & 0 & 0 & 0 \\ \hline
0 & 0 & 0 & 0 \\ 0 & 0 & 0 & 0 
\end{array} 
\end{array}\right).$$

Continuing the topic of diagonal extremality, let us prove that  Conjecture~\ref{diagincompcon} from Section~\ref{diagextrsection} is true for matrices of order $2$.

\begin{utv} [supporting Conjecture~\ref{diagincompcon}] \label{diagin2}
Let $A$ be a matrix of order $2$ such that $A = A(\Lambda)$ for some hyperplane cover $\Lambda$. If $A$ contains a polydiagonal, then it has a diagonal. 
\end{utv}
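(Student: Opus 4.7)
The plan is to prove the contrapositive: if $A = A(\Lambda)$ contains no diagonal, then $A$ has no polydiagonal. After permuting hyperplanes of each direction (a matrix equivalence preserving polydiagonals and diagonals), I may assume $\lambda_{i,1} \leq \lambda_{i,2}$ for every $i$. Setting $\mu_i = \lambda_{i,2} - \lambda_{i,1} \geq 0$, $c = \sum_i \lambda_{i,1}$, $\theta = 1 - c$, and $T(\alpha) = \{i : \alpha_i = 2\}$, the cover weight of an index $\alpha$ equals $c + \sum_{i \in T(\alpha)} \mu_i$, so $\alpha \in A$ if and only if $\sum_{i \in T(\alpha)} \mu_i \geq \theta$. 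Under the bijection $\alpha \leftrightarrow T(\alpha)$, the matrix $A$ is identified with the threshold family $\mathcal{A} = \{T \subseteq \{1,\ldots,d\} : \sum_{i \in T} \mu_i \geq \theta\}$.

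Writing $s = \sum_i \mu_i$, the antipode of $\alpha$ corresponds to $\{1,\ldots,d\} \setminus T(\alpha)$ with coordinate sum $s - \sum_{i \in T(\alpha)} \mu_i$, so the absence of a diagonal in $A$ amounts to the statement that no subset $T$ satisfies $\sum_{i \in T} \mu_i \in [\theta, s - \theta]$. If $\mathcal{A}$ is empty then $A$ is empty and the claim is trivial, so I assume $\mathcal{A}$ is nonempty and set $m = \min_{T \in \mathcal{A}} \sum_{i \in T} \mu_i$. I then aim to show $m > s/2$: if $s < 2\theta$, the bound is immediate from $m \geq \theta > s/2$; if $s \geq 2\theta$, the forbidden interval $[\theta, s-\theta]$ is nonempty, so the minimum, being at least $\theta$ yet outside that interval, must satisfy $m > s - \theta \geq s/2$.

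From the inequality $m > s/2$ I construct an explicit hyperplane cover of $A$ of total weight strictly less than $n = 2$: define the table $\Lambda'$ by $\lambda'_{i,1} = 0$ and $\lambda'_{i,2} = \mu_i / m$; for every $\alpha \in A$ the cover equals $\sum_{i \in T(\alpha)} \mu_i / m \geq m/m = 1$, so $\Lambda'$ covers $A$, and its total weight is $s/m < 2$. Theorem~\ref{multikonig} then forces the maximum polyplex weight in $A$ to be strictly below $n$, ruling out a polydiagonal and giving the contrapositive. The delicate point will be the strict inequality $m > s - \theta$ in the boundary case $s = 2\theta$, where the forbidden interval collapses to the single point $\theta$; here one needs that no $T \in \mathcal{A}$ can realize $\sum_{i \in T} \mu_i = \theta$ exactly, since that value lies in the forbidden interval.
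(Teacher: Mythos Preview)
Your proof is correct. The paper's argument is considerably shorter but rests on the same antipodal observation you exploit: take a cover $\Lambda$ of \emph{minimum} weight among those with $A=A(\Lambda)$; since $A$ has a polydiagonal, this weight is at least $2$, and if $A$ has no diagonal then for every unity index $\alpha$ the antipode $\overline{\alpha}$ satisfies $a_{\overline{\alpha}}=0$, so its cover weight is strictly below $1$ and hence the cover weight of $\alpha$ exceeds $W-1\ge 1$ by a uniform $\varepsilon$; rescaling $\Lambda$ by $\frac{1}{1+\varepsilon}$ then yields a strictly lighter cover still defining $A$, contradicting minimality.

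What you do differently is to pass explicitly to the threshold description $\mathcal{A}=\{T:\sum_{i\in T}\mu_i\ge\theta\}$ and isolate the quantity $m=\min_{T\in\mathcal{A}}\sum_{i\in T}\mu_i$, proving $m>s/2$ and building the cover $\Lambda'$ with $\lambda'_{i,2}=\mu_i/m$. This is a shift-then-rescale rather than a pure rescale, and it avoids ever invoking the polydiagonal hypothesis in the forward direction (you prove the clean contrapositive directly). The gain is an explicit, self-contained construction of a light cover; the cost is the case split and the boundary analysis at $s=2\theta$, which the paper's two-line scaling sidesteps. One small remark: you should note (it is implicit in your ``no $T$ has sum in $[\theta,s-\theta]$'' condition applied to $T=\emptyset$) that $\theta>0$, which guarantees $m>0$ and makes the division by $m$ legitimate.
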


\begin{proof}
Let $\Lambda$ be a hyperplane cover of a minimum weight such that $A = A(\Lambda)$. Since $A$ has a polydiagonal, the weight of a hyperplane cover $\Lambda$ is  not less than $2$. Nonexistence of diagonals in $A$ means that each unity entry of $A$ is covered by $\Lambda$ with weight at least $1 + \varepsilon$ for some $\varepsilon > 0$. But then $\Lambda' = \frac{1}{1+\varepsilon}\Lambda$ is a  hyperplane cover of $A$  with a smaller weight and such that $A = A(\Lambda')$ which is impossible.  
\end{proof}

As a corollary, we have the following alternatives for submatrices of extremal matrices of order $3$ that may help us to prove (or disprove) their diagonal extremality. 

\begin{utv}
Let $A$ be an extremal matrix of order $3$ and let $a_{\alpha} = 0$.
\begin{enumerate}
\item If the submatrix $A_\alpha$ has a polydiagonal, then $A_{\alpha}$ contains a diagonal.
\item If the submatrix $A_\alpha$ has no polydiagonals, then $A_{\alpha}$ is an extremal matrix of order $2$. 
\end{enumerate}
\end{utv}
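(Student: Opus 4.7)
The plan is to reduce both cases to existing results via the single observation that $A_\alpha$ is itself defined by a hyperplane cover. Let $\Lambda$ be the optimal hyperplane cover of $A$, so $A=A(\Lambda)$ by Theorem~\ref{extrcode}, and let $\Lambda_\alpha$ denote the $d\times(n-1)$ table obtained by deleting the column $\{\lambda_{i,\alpha_i}\}_{i=1}^{d}$. For every index $\gamma$ with $\gamma_i\neq\alpha_i$ for all $i$, the cover weight $\sum_i\lambda_{i,\gamma_i}$ is identical whether computed in $\Lambda$ or in $\Lambda_\alpha$, which yields $A_\alpha=A(\Lambda_\alpha)$.

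With this in hand, Case~1 is immediate: $A_\alpha$ has order~$2$, equals $A(\Lambda_\alpha)$, and by assumption contains a polydiagonal, so Proposition~\ref{diagin2} produces a diagonal in $A_\alpha$. For Case~2 I would invoke Theorem~\ref{order2char}, which reduces extremality of the order-$2$ matrix $A_\alpha$ to the property of being antipodal (the absence of polydiagonal is already in hand). Any antipodal pair $(\gamma',\overline{\gamma'})$ in $A_\alpha$ cannot have both entries equal to $1$, since then $\{e_{\gamma'},e_{\overline{\gamma'}}\}$ would be a diagonal and hence a polydiagonal of $A_\alpha$, contradicting the hypothesis. The substantive task is therefore to exclude the possibility that both entries equal $0$.

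Suppose toward contradiction that some antipodal pair has $a_{\gamma'}=a_{\overline{\gamma'}}=0$. Since $\gamma'$ and $\overline{\gamma'}$ lie in the $A_\alpha$ ambient space and are antipodal there, in every direction $i$ the coordinates $\alpha_i,\gamma'_i,\overline{\gamma'}_i$ are three distinct values, i.e.\ $\{\alpha_i,\gamma'_i,\overline{\gamma'}_i\}=\{1,2,3\}$. Thus $\alpha,\gamma',\overline{\gamma'}$ form a diagonal in $A$ consisting of three zero entries, and a direction-by-direction sum of their $\Lambda$-cover weights gives
\[ c_\alpha+c_{\gamma'}+c_{\overline{\gamma'}}=\sum_{i=1}^{d}\bigl(\lambda_{i,\alpha_i}+\lambda_{i,\gamma'_i}+\lambda_{i,\overline{\gamma'}_i}\bigr)=\sum_{i=1}^{d}\sum_{j=1}^{3}\lambda_{i,j}=3-\delta, \]
where the final equality uses that $\Lambda$ has weight $n-\delta=3-\delta$ by Theorem~\ref{multikonig}. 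On the other hand, each of $\alpha,\gamma',\overline{\gamma'}$ is a zero-entry of $A$, so Proposition~\ref{extrnescond} bounds every individual cover weight by $1-\delta$, hence the sum by $3(1-\delta)=3-3\delta$. Together these force $2\delta\leq 0$, contradicting $\delta>0$ from Proposition~\ref{weightextr}. Therefore $A_\alpha$ is antipodal, and Theorem~\ref{order2char} delivers its extremality.

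The main obstacle is spotting the identity that converts a local configuration of three zeros on a diagonal of $A$ into a global statement about the total weight of $\Lambda$, which is what sets up the clash with the pointwise bound of Proposition~\ref{extrnescond}. Once that identity is in view, the contradiction is purely arithmetical and no new machinery beyond the tools already developed in the paper is required.
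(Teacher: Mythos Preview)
Your proof is correct and follows essentially the same approach as the paper: both arguments establish $A_\alpha=A(\Lambda_\alpha)$, invoke Proposition~\ref{diagin2} for Case~1, and for Case~2 rule out an antipodal pair of zeros via the identity that cover weights along a diagonal sum to the total weight of the cover. The only cosmetic difference is that the paper applies this identity to the pair $(\gamma',\overline{\gamma'})$ inside $A_\alpha$ (noting that $\Lambda_\alpha$ has weight $\geq 2$, which itself uses Proposition~\ref{extrnescond} on $\alpha$), whereas you apply it to the full triple $(\alpha,\gamma',\overline{\gamma'})$ in $A$ and invoke Proposition~\ref{extrnescond} three times; the arithmetic is the same.
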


\begin{proof}
1. Since the submatrix $A_{\alpha}$ has a polydiagonal and $A_{\alpha} = A(\Lambda)$ for some hyperplane cover $\Lambda$, Proposition~\ref{diagin2} gives that $A_{\alpha}$ has a diagonal.

2. Since $A_{\alpha} = A(\Lambda)$ for some hyperplane cover $\Lambda$ of weight at least $2$ and $A$ has no polydiagonals, we have that the matrix $A$ is antipodal (because there are no two antipodal entries both covered with weights less than $1$).  By Theorem~\ref{order2char}, every antipodal matrix of order $2$ without polydiagonals is extremal.
\end{proof}

Let us turn back to extremal matrices of order $2$.  Besides Construction~\ref{extrconstrdim},  we can propose one more iterative construction giving extremal matrices of order $2$ and of greater dimension. For this aim, we introduce the following notation.

As it was proved in Theorem~\ref{slackuse}, every row of an optimal hyperplane cover $\Lambda$ of any extremal matrix has at least one zero entry. For $d$-dimensional extremal matrices of order $2$, it means that their optimal hyperplane covers $\Lambda$ have not greater than $d$ nonzero entries $\lambda_1, \ldots, \lambda_l$, with all of them located in different rows of $\Lambda$. We will say that $\lambda_1, \ldots, \lambda_l$ are \textit{essential weights} of the optimal hyperplane cover $\Lambda$.

\begin{const} \label{extrconst2}
Given an optimal hyperplane cover $\Lambda$ of a $d$-dimensional matrix $A$ of order $2$ and of deficiency $\delta$ with essential weights $\lambda_1, \ldots, \lambda_d$, let the essential weights of the $(d+1) \times 2$-table $\Lambda'$ be $\lambda_1, \ldots, \lambda_{d-1}, \lambda_d - \delta, \delta$.
\end{const}

\begin{teorema} \label{extrconst2proof}
Let $\Lambda$ be an optimal hyperplane cover of a $d$-dimensional extremal matrix $A$ of order $2$ and of deficiency $\delta$ such that there are no entries of $A$ covered by $\Lambda$ with weight strictly between $1$ and $1+\delta$. Then the table $\Lambda'$ obtained from $\Lambda$ via Construction~$\ref{extrconst2}$ is an optimal hyperplane cover of the $(d+1)$-dimensional extremal matrix $B = A(\Lambda')$ of order $2$ and of deficiency $\delta$.
\end{teorema}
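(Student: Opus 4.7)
The plan is to apply Theorem~\ref{order2char}: it suffices to show that $B = A(\Lambda')$ is antipodal and has no polydiagonal. The bookkeeping is organized around the projection $\pi\colon I_2^{d+1}\to I_2^d$ dropping the last coordinate, together with the identity
\[
\mathrm{cov}_{\Lambda}(\pi(\alpha)) = \mathrm{cov}_{\Lambda'}(\alpha) + \delta\bigl([\alpha_d = 1] - [\alpha_{d+1} = 1]\bigr),
\]
which follows by comparing essential weights of $\Lambda$ and $\Lambda'$. The total weight of $\Lambda'$ equals that of $\Lambda$, namely $2-\delta$, since we only split $\lambda_d = (\lambda_d-\delta) + \delta$. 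In particular $\mathrm{cov}_{\Lambda'}(\alpha) + \mathrm{cov}_{\Lambda'}(\bar\alpha) = 2-\delta < 2$ for every antipodal pair, which immediately excludes $\alpha, \bar\alpha \in B$ simultaneously.

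Next I would construct an optimal polyplex $K_B$ in $B$ of weight $2-\delta$ by lifting an optimal polyplex $K_A$ of $A$: for each $\gamma \in \mathrm{supp}(K_A)$, place mass $K_A(\gamma)$ at $(\gamma,1)$ if $\gamma_d = 1$, and at $(\gamma,2)$ if $\gamma_d = 2$. The first $d-1$ hyperplane constraints on $K_B$ reduce to those on $K_A$; the $d$-th row's hyperplane sums split $K_A$ according to $\gamma_d$, and the new $(d{+}1)$-st row's sums are exactly the same split, so every hyperplane sum of $K_B$ stays bounded by $1$. By Theorem~\ref{slackuse}, each $\gamma \in \mathrm{supp}(K_A)$ satisfies $\mathrm{cov}_\Lambda(\gamma) = 1$, and this together with the displayed identity forces $\mathrm{cov}_{\Lambda'} = 1$ at every index of $\mathrm{supp}(K_B)$, so $K_B \subseteq B$. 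Comparing weights, Theorem~\ref{multikonig} yields that $\Lambda'$ is an optimal hyperplane cover of $B$, that the maximum polyplex weight in $B$ equals $2-\delta < 2$ (no polydiagonal), and consequently that the deficiency of $B$ is $\delta$.

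The main obstacle is the other half of antipodality: if $b_\beta = 0$, then $\bar\beta \in B$. I would argue by cases on $(\beta_d, \beta_{d+1})$. In the ``pure'' cases $(1,1)$ and $(2,2)$ the identity reads $\mathrm{cov}_\Lambda(\pi(\beta)) = \mathrm{cov}_{\Lambda'}(\beta) < 1$, so $\pi(\beta) \notin A$; antipodality of the extremal order-$2$ matrix $A$ (Theorem~\ref{order2char}) gives $\overline{\pi(\beta)} \in A$, and re-applying the identity yields $\mathrm{cov}_{\Lambda'}(\bar\beta) \geq 1$. In the ``mixed'' cases $(1,2)$ and $(2,1)$, the inequality $\mathrm{cov}_{\Lambda'}(\beta) < 1$ translates into $\mathrm{cov}_\Lambda(\pi(\beta)) < 1+\delta$ and $\mathrm{cov}_\Lambda(\pi(\beta)) < 1-\delta$ respectively. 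Here the hypothesis that no index of $A$ is covered by $\Lambda$ with weight strictly between $1$ and $1+\delta$, combined with $\mathrm{cov}_\Lambda(\pi(\beta)) + \mathrm{cov}_\Lambda(\overline{\pi(\beta)}) = 2-\delta$, pins $\mathrm{cov}_\Lambda(\overline{\pi(\beta)})$ to a value (either exactly $1$ or at least $1+\delta$) that, after substitution into the identity, forces $\mathrm{cov}_{\Lambda'}(\bar\beta) \geq 1$ in every subcase. Thus $B$ is antipodal without polydiagonals, and Theorem~\ref{order2char} delivers extremality.
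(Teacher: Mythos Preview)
Your proof is correct and follows essentially the same strategy as the paper: show that $B=A(\Lambda')$ has no polydiagonal, exhibit a polyplex of weight $2-\delta$ in $B$ by diagonally embedding an optimal polyplex of $A$, and then establish that $B$ is antipodal so that Theorem~\ref{order2char} applies. Your explicit coverage identity
\[
\mathrm{cov}_{\Lambda}(\pi(\alpha)) = \mathrm{cov}_{\Lambda'}(\alpha) + \delta\bigl([\alpha_d = 1] - [\alpha_{d+1} = 1]\bigr)
\]
is a clean piece of bookkeeping that the paper leaves implicit; the paper instead argues the antipodality by contrapositive, observing that a hypothetical index $\alpha$ with $\mathrm{cov}_{\Lambda'}(\alpha)\in(1-\delta,1)$ would force $\mathrm{cov}_{\Lambda}(\pi(\alpha))$ into one of the forbidden intervals $(1-\delta,1)$, $(1,1+\delta)$, or $(1-2\delta,1-\delta)$, each ruled out either by Proposition~\ref{extrnescond} or by the hypothesis (the last via the antipodal sum).

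One small inaccuracy: in the mixed case $(\beta_d,\beta_{d+1})=(1,2)$ your parenthetical ``either exactly $1$ or at least $1+\delta$'' is not literally what you obtain. There $\mathrm{cov}_\Lambda(\pi(\beta))<1+\delta$ and the hypothesis give only $\mathrm{cov}_\Lambda(\pi(\beta))\le 1$, hence $\mathrm{cov}_\Lambda(\overline{\pi(\beta)})\ge 1-\delta$, which may equal $1-\delta$. This is nonetheless sufficient, since for $\bar\beta$ the identity reads $\mathrm{cov}_{\Lambda'}(\bar\beta)=\mathrm{cov}_\Lambda(\overline{\pi(\beta)})+\delta\ge 1$. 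So the conclusion stands; just tighten the wording of that case.
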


\begin{proof}
Since $B$ is defined by a hyperplane cover $\Lambda'$ of weight $2-\delta$, the matrix $B$ has no polydiagonals.

By the construction, the support of the matrix $A$ is contained in the support of the matrix $B$, more specifically, hyperplanes of the direction $d$ of the matrix $A$ are diagonally located in the intersection of the hyperplanes of directions $d$ and $d+1$ in the matrix $B$. So each polyplex $K$ of weight $2 -\delta$ in $A$ can be put to a polyplex $K'$ of the same weight in the matrix $B$.

To prove the theorem, it remains to show that the matrix $B$ is antipodal because in this case Theorem~\ref{order2char} gives that $B$ is an extremal matrix.  If the matrix $B$ is not antipodal, then there exists an index $\alpha$ covered by $\Lambda'$ with weight strictly between $1 - \delta$ and $1$. Note that by Proposition~\ref{extrnescond}, the hyperplane cover $\Lambda$ cannot cover $\alpha$ with the same weight as $\Lambda'$, because $\Lambda$ is an optimal hyperplane cover of an extremal matrix. Then by the construction, $\Lambda$ covers $\alpha$ with weight strictly between $1$ and $1 + \delta$ or with weight strictly between $1 - 2\delta$ and $1 - \delta$ but both cases contradicts to the condition of the theorem. 
\end{proof}

This construction may be used for proving Conjectures~\ref{uniquecon},~\ref{defcon} and~\ref{multicon} for matrices of order $2$.

\begin{sled} 
Assume that for every extremal matrix of order $2$ and deficiency $\delta$ there are no entries covered with weight strictly between $1$ and $1 + \delta$ by their optimal hyperplane covers. Then $\delta = \nicefrac{1}{m}$ for some $m \in \mathbb{N}$, all such extremal matrices have the unique optimal hyperplane covers $\Lambda$, with all entries $\lambda_{i,j}$ being integer multiples of $\delta$.
\end{sled}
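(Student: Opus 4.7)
The plan is induction on the dimension $d$, with Construction~\ref{extrconst2} (and its reverse, justified under the standing hypothesis by Theorem~\ref{extrconst2proof}) supplying the reduction. The base case $d=2$ follows from Theorem~\ref{2dimkonig} and the discussion after it: the unique $2$-dimensional extremal matrix of order $2$ has $\delta=1$, a unique optimal cover whose single essential weight equals $1$, and all its entries are trivially integer multiples of $\delta=1$.

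For the inductive step with $d\geq 3$, let $A$ be an extremal $d$-dimensional matrix of order $2$ with optimal cover $\Lambda$ and deficiency $\delta$. By Proposition~\ref{zeroexistextr} each row of $\Lambda$ already contains a zero; if some row of $\Lambda$ is entirely zero, then $A$ is constant along the corresponding direction, and Construction~\ref{extrconstrdim} (read backwards) exhibits $A$ as arising from a $(d-1)$-dimensional extremal matrix, to which the induction hypothesis applies directly. Otherwise $\Lambda$ has exactly $d$ nonzero essential weights $\lambda_1\leq\cdots\leq\lambda_d$, and the whole argument reduces to the key lemma that $\lambda_1=\delta$. Granted this lemma, I would reverse Construction~\ref{extrconst2} by deleting the entry $\delta=\lambda_1$ from $\Lambda$ and absorbing it into some other essential weight $\lambda_j$ ($j\geq 2$): the resulting $(d-1)\times 2$-table $\widetilde\Lambda$ has weight $2-\delta$ and defines a matrix $\widetilde A=A(\widetilde\Lambda)$ of deficiency $\delta$. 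Extremality of $\widetilde A$ is then checked directly, after which the forward direction of Theorem~\ref{extrconst2proof} applied to $\widetilde A$ recovers $A$ together with its cover $\Lambda$. The induction hypothesis applied to $\widetilde A$ yields $\delta=1/m$, uniqueness of $\widetilde\Lambda$, and that all entries of $\widetilde\Lambda$ are integer multiples of $\delta$; these properties transfer to $\Lambda$ because $\Lambda$ differs from $\widetilde\Lambda$ only by splitting one multiple-of-$\delta$ entry into the two multiple-of-$\delta$ entries $\delta$ and $\lambda_j$.

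The technical heart of the argument is therefore the key lemma $\lambda_1=\delta$. The plan is a proof by contradiction: suppose $\lambda_1>\delta$. Since the cover weight of every entry of an order-$2$ matrix is precisely a subset sum $\sum_{i\in S}\lambda_i$, the standing hypothesis together with Proposition~\ref{extrnescond} forces every such subset sum into $[0,1-\delta]\cup\{1\}\cup[1+\delta,2-\delta]$. By the complementary slackness part of Theorem~\ref{slackuse} applied to the hyperplane carrying $\lambda_1$, there exists $T\subseteq\{2,\ldots,d\}$ with $\lambda_1+\sum_{j\in T}\lambda_j=1$. Swapping an element $k\notin T\cup\{1\}$ into $T$ yields a subset of weight $1-\lambda_1+\lambda_k$, which forces $\lambda_k=\lambda_1$ or $\lambda_k\geq \lambda_1+\delta$. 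Combining these forced gaps among the $\lambda_i$ with analogous swaps out of $T$ and with the identity $\sum\lambda_i=2-\delta$ should eventually drive some subset sum into one of the forbidden intervals $(1-\delta,1)$ or $(1,1+\delta)$, contradicting the hypothesis. I expect this subset-sum case analysis to be the main obstacle of the entire argument.

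Uniqueness of the optimal cover for every extremal matrix of order $2$ is then a formal consequence of the integer-multiple-of-$\delta$ conclusion, along the lines of the observation made right after Conjecture~\ref{multicon}: a convex combination of two distinct optimal covers is again optimal, but its entries would generically cease to be integer multiples of $\delta$, contradicting the already-proven conclusion applied to that combination.
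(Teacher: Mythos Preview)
The paper's proof goes in the opposite direction and is much shorter: instead of decreasing the dimension by induction, it repeatedly applies Construction~\ref{extrconst2} \emph{forward}, increasing the dimension at every step and shrinking the largest essential weight. The standing hypothesis is precisely the extra condition in Theorem~\ref{extrconst2proof}, so each application again yields an extremal matrix of the same deficiency $\delta$. The point is that whenever an essential weight $\lambda$ is split into $\lambda-\delta$ and $\delta$, Proposition~\ref{edgebound} applied to the new extremal matrix forces $\lambda-\delta$ to be $0$ or at least $\delta$; iterating, every original $\lambda_i$ is an integer multiple of $\delta$, and after finitely many steps all essential weights equal $\delta$. Clause~2 of Theorem~\ref{allequalconstr} then gives $\delta=\nicefrac{1}{m}$ and uniqueness, and these conclusions transfer back to the original cover.

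Your backward route has two genuine gaps. First, the key lemma $\lambda_1=\delta$ is left to a subset-sum case analysis that you yourself flag as the main obstacle; the forward approach never needs this lemma. Second, and more seriously, you write ``extremality of $\widetilde A$ is then checked directly'' without any argument. Construction~\ref{extrconst2} is not shown to be reversible anywhere in the paper (contrast clause~2 of Construction~\ref{extrconstrorder}, which \emph{is} reversible via Theorem~\ref{onereduction}), and proving that merging $\delta$ back into $\lambda_j$ yields an extremal $(d-1)$-dimensional matrix would require an independent argument of strength comparable to Theorem~\ref{extrconst2proof}; you cannot appeal to that theorem since it only runs in the forward direction. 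The paper's approach sidesteps both obstacles entirely.
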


\begin{proof}
Construction~\ref{extrconst2} allows us to obtain extremal matrices of deficiency $\delta$ with a smaller maximum essential weight from a given extremal matrix of deficiency $\delta$. By the condition, we can apply Construction~\ref{extrconst2} until all essential weights of the resulting extremal matrix are the same. Application of Theorem~\ref{allequalconstr} completes the proof.
\end{proof}

We conclude this section with one more conjecture that possibly allows us to describe all optimal hyperplane covers of extremal matrices of order $2$ in another way.

\begin{con}
Let $A$ be an extremal matrix of order $2$ and $K$ be a vertex of a polyhedron of polyplexes in $A$. Then values  $k_\alpha$, $\alpha \in supp(K)$ are essential weights of an optimal hyperplane cover for some multidimensional extremal matrix of order $2$.
\end{con}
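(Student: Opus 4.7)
The plan is to construct, from the vertex $K$, a multidimensional extremal matrix $A'$ of order $2$ whose optimal hyperplane cover has essential weights exactly $\{k_\alpha : \alpha \in supp(K)\}$. Write $supp(K) = \{\alpha^1,\ldots,\alpha^m\}$ and $k_r := k_{\alpha^r} > 0$. The natural candidate is the $m$-dimensional matrix $A' := A'(\Lambda')$ of order $2$, where $\Lambda'$ is the $m \times 2$ table with $\lambda'_{s,1} = k_s$ and $\lambda'_{s,2} = 0$: by construction its essential weights are $\{k_1,\ldots,k_m\}$, and what remains is to verify that $\Lambda'$ is an optimal cover of $A'$ and that $A'$ is extremal.

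First I would extract combinatorial data from the vertex. Since $K$ is a vertex of the polyhedron of polyplexes in $A$, there are $|supp(A)|$ linearly independent active constraints of problem~(\ref{maxplexpr}) at $K$; after discarding the $|supp(A)| - m$ independent equalities $k_\alpha = 0$ (for $\alpha \notin supp(K)$) there remain $m$ linearly independent tight hyperplane constraints, say $\Gamma_{i_1,j_1},\ldots,\Gamma_{i_m,j_m}$. The $m \times m$ $(0,1)$-matrix $N$ with $N_{s,r} = 1$ iff $\alpha^r \in \Gamma_{i_s,j_s}$ is invertible and satisfies $N(k_1,\ldots,k_m)^T = (1,\ldots,1)^T$. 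Now define $\beta^r \in \{1,2\}^m$ by $\beta^r_s := 1$ iff $N_{s,r} = 1$ (and $\beta^r_s := 2$ otherwise), and let $K'$ be the $m$-dimensional matrix of order $2$ with $k'_{\beta^r} := k_r$ and all other entries zero. The identity $Nk = (1,\ldots,1)^T$ rewrites as $\sum_{r:\beta^r_s = 1} k_r = 1$ for every $s$, which is exactly the statement that the sum of $K'$ over each hyperplane $\Gamma'_{s,1}$ of $A'$ equals $1$; the remaining polyplex inequalities for $K'$ follow from the (feasible but non-tight) hyperplane inequalities for $K$ in $A$. Provided each $\beta^r$ belongs to $supp(A')$ — equivalently, $\sum_{s : N_{s,r} = 1} k_s \geq 1$ for every $r$ — the matrix $K'$ is a polyplex in $A'$ of weight $\sum_r k_r$, matching the weight of $\Lambda'$, so by Theorem~\ref{multikonig} the cover $\Lambda'$ is optimal.

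The principal obstacle is then to establish the extremality of $A'$. I would attempt it by contradiction: if some $a'_\gamma = 0$ and $A' \cup \{\gamma\}$ has no polydiagonal, Theorem~\ref{multikonig} yields a hyperplane cover of the enlarged matrix of weight strictly less than $2$. Pulling such a cover back through the correspondence $\beta \mapsto \alpha$, using that direction $s$ of $A'$ is linked to the hyperplane $\Gamma_{i_s,j_s}$ of $A$, should produce a hyperplane cover of $A$ (or of a one-entry enlargement of $A$) whose weight is strictly less than $2 - \delta(A)$, contradicting the extremality of $A$. The delicate step is to carry out the pullback uniformly in $\gamma$: the map $\alpha^r \mapsto \beta^r$ is in general neither injective nor surjective onto $\{1,2\}^m$, so an arbitrary $\gamma \in \{1,2\}^m \setminus supp(A')$ must be matched with a suitable index in the original $d$-dimensional space, in the antipodal spirit of Theorem~\ref{order2char}; guaranteeing the technical inequality $\sum_{s : N_{s,r} = 1} k_s \geq 1$ used above — perhaps via a basis-exchange among the at-least-$m$ tight hyperplanes — falls in the same category of delicate structural arguments. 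Once extremality of $A'$ is secured, Proposition~\ref{optuniupper} delivers uniqueness of $\Lambda'$, and its essential weights coincide with $\{k_\alpha : \alpha \in supp(K)\}$ by construction, proving the conjecture.
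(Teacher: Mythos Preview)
This statement is presented in the paper as a \textbf{conjecture}, not a theorem: the paper offers no proof, so there is nothing to compare your attempt against. What you have written is therefore not being judged as an alternative proof but as an attempted resolution of an open problem.

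As a strategy your construction is natural, but the argument as written is not a proof: you yourself flag the two places where it breaks down, and those gaps are genuine. First, the claim that each $\beta^r$ lies in $supp(A')$, i.e.\ that $\sum_{s:N_{s,r}=1} k_s \geq 1$, does not follow from $Nk=\mathbf{1}$: the latter is a condition on the \emph{rows} of $N$ while you need a condition on its \emph{columns}, and there is no a priori relation between the two for a general invertible $(0,1)$-matrix (basis-exchange may help, but you have not shown that a suitable basis exists). Without $\beta^r\in supp(A')$ you do not even have a polyplex $K'$ in $A'$, so optimality of $\Lambda'$ is not established.

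Second, the extremality of $A'$ is the heart of the matter and your ``pullback'' sketch does not carry it. The directions of $A'$ correspond to a \emph{selection} of $m$ tight hyperplanes of $A$, but an arbitrary zero index $\gamma\in\{1,2\}^m$ of $A'$ need not correspond to any index of $A$ at all, so there is no obvious way to manufacture from a cover of $A'\cup\{\gamma\}$ a cover of (an enlargement of) $A$ with controlled weight. In short, your plan identifies the right objects but does not supply the structural lemmas needed to close either gap; until those are proved, the conjecture remains open.
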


\section*{Acknowledgements}

This work was funded by the Russian Science Foundation under grant 18-11-00136 (Sections 3--8) and is supported in part by the Young Russian Mathematics award (Sections 1--2). The author is also grateful to Vladimir N. Potapov and Sergey V. Avgustinovich for useful discussions.

\section*{Appendix. List of extremal matrices of small sizes}

We list all small-sized extremal matrices with their deficiencies $\delta$ and optimal hyperplane covers $\Lambda$ that cannot be obtained by Constructions~\ref{extrconstrdim},~\ref{extrconstrorder} and~\ref{extrconst2} from matrices of smaller dimension or order. We indicate by bold all indices covered with weight $1$ by hyperplane covers $\Lambda$. With the help of Proposition~\ref{optuniupper} it can be checked that all these extremal matrices have unique optimal hyperplane covers. Also, it can be verified directly that all these matrices are diagonally extremal.

\bigskip

\textbf{$3$-dimensional extremal matrix of order $2$}

$$ \left( \begin{array}{cc|cc}
1 & \mathbb{1} & \mathbb{1} & 0 \\ \mathbb{1} & 0 & 0 & 0 
\end{array} \right)
~~~~
\Lambda = \left(\begin{array}{cc} \nicefrac{1}{2} & 0 \\ \nicefrac{1}{2} & 0 \\ \nicefrac{1}{2} & 0 \end{array}\right) ~~~~ \delta = \nicefrac{1}{2}.
$$

\bigskip

\textbf{$4$-dimensional extremal matrix of order $2$}

$$   \left( \begin{array}{cc|cc}
1 & 1 & 1 & \mathbb{1} \\ 1 & \mathbb{1} & \mathbb{1} & 0 \\
\hline
\mathbb{1} & 0 & 0 & 0 \\ 0 & 0 & 0 & 0
\end{array} \right)
~~~~
\Lambda = \left(\begin{array}{cc} \nicefrac{2}{3} & 0 \\ \nicefrac{1}{3} & 0 \\ \nicefrac{1}{3} & 0 \\ \nicefrac{1}{3} & 0 \end{array}\right) ~~~~ \delta = \nicefrac{1}{3}.
$$

\bigskip

\textbf{$5$-dimensional extremal matrices of order $2$}

$$  \left( \begin{array}{cc} 
\begin{array}{cc|cc}
1 & 1 & 1 & 1 \\ 1 & 1 & 1 & \mathbb{1} \\
\hline
1 & 1 & 1 & \mathbb{1} \\ 1 & \mathbb{1} & \mathbb{1} & 0 
\end{array} &
\begin{array}{cc|cc}
\mathbb{1} & 0 & 0 & 0 \\ 0 & 0 & 0 & 0 \\
\hline
0 & 0 & 0 & 0 \\ 0 & 0 & 0 & 0
\end{array} 
\end{array} \right)
~~~~
\Lambda = \left(\begin{array}{cc} \nicefrac{3}{4} & 0 \\ \nicefrac{1}{4} & 0 \\ \nicefrac{1}{4} & 0 \\ \nicefrac{1}{4} & 0 \\ \nicefrac{1}{4} & 0 \end{array}\right) ~~~~ \delta = \nicefrac{1}{4}.
$$

$$  \left( \begin{array}{cc} 
\begin{array}{cc|cc}
1 & 1 & 1 & 1 \\ 1 & 1 & 1 & \mathbb{1} \\
\hline
1 & \mathbb{1} & \mathbb{1} & 0 \\ \mathbb{1} & 0 & 0 & 0 
\end{array} &
\begin{array}{cc|cc}
1 & \mathbb{1} & \mathbb{1} & 0 \\ \mathbb{1} & 0 & 0 & 0 \\
\hline
0 & 0 & 0 & 0 \\ 0 & 0 & 0 & 0
\end{array} 
\end{array} \right)
~~~~
\Lambda = \left(\begin{array}{cc} \nicefrac{1}{2} & 0 \\ \nicefrac{1}{2} & 0 \\ \nicefrac{1}{4} & 0 \\ \nicefrac{1}{4} & 0 \\ \nicefrac{1}{4} & 0 \end{array}\right) ~~~~ \delta = \nicefrac{1}{4}.
$$

$$ \left( \begin{array}{cc} 
\begin{array}{cc|cc}
1 & 1 & 1 & 1 \\ 1 & 1 & 1 & \mathbb{1} \\
\hline
1 & 1 & \mathbb{1} & 0 \\ 1 & \mathbb{1} & 0 & 0 
\end{array} &
\begin{array}{cc|cc}
1 & \mathbb{1} & 0 & 0 \\ \mathbb{1} & 0 & 0 & 0 \\
\hline
0 & 0 & 0 & 0 \\ 0 & 0 & 0 & 0
\end{array} 
\end{array} \right)
~~~~
\Lambda = \left(\begin{array}{cc} \nicefrac{3}{5} & 0 \\ \nicefrac{2}{5} & 0 \\ \nicefrac{2}{5} & 0 \\ \nicefrac{1}{5} & 0 \\ \nicefrac{1}{5} & 0 \end{array}\right) ~~~~ \delta = \nicefrac{1}{5}.
$$

\bigskip

\textbf{$3$-dimensional extremal matrices of order $3$}

$$\left( \begin{array}{ccc|ccc|ccc}
1 & 1 & 1 & 1 & 1 & \mathbb{1} & \mathbb{1} & \mathbb{1} & 0 \\ 
1 & 1 & \mathbb{1} & \mathbb{1} & \mathbb{1} & 0 & 0 & 0 & 0 \\ 
\mathbb{1} & \mathbb{1} & 0 & 0 & 0 & 0 & 0 & 0 & 0 
\end{array} \right)
~~~~
\Lambda = \left(\begin{array}{ccc} \nicefrac{2}{3} & \nicefrac{1}{3} & 0 \\ \nicefrac{2}{3} & \nicefrac{1}{3} & 0 \\ \nicefrac{1}{3} & \nicefrac{1}{3} & 0 \end{array}\right) ~~~~ \delta = \nicefrac{1}{3}.
$$
$$\left( \begin{array}{ccc|ccc|ccc}
1 & 1 & 1 & 1 & 1 & \mathbb{1} & \mathbb{1} & 0 & 0 \\ 
1 & 1 & \mathbb{1} & 1 & \mathbb{1} & 0 & 0 & 0 & 0 \\ 
1 & \mathbb{1} & 0 & \mathbb{1} & 0 & 0 & 0 & 0 & 0 
\end{array} \right)
~~~~
\Lambda = \left(\begin{array}{ccc} \nicefrac{3}{4} & \nicefrac{1}{2} & 0 \\ \nicefrac{1}{2} & \nicefrac{1}{4} & 0 \\ \nicefrac{1}{2} & \nicefrac{1}{4} & 0 \end{array}\right) ~~~~ \delta = \nicefrac{1}{4}.
$$

\bigskip

\textbf{$3$-dimensional extremal matrices of order $4$}

$$\left( \begin{array}{cccc|cccc|cccc|cccc}
1 & 1 & 1 & 1 & 1 & 1 & 1 & 1 & 1 & 1 & 1 & \mathbb{1} & \mathbb{1} & \mathbb{1} & \mathbb{1} & 0 \\ 
1 & 1 & 1 & 1 & 1 & 1 & 1 & \mathbb{1} & \mathbb{1} & \mathbb{1} & \mathbb{1} & 0 & 0 & 0 & 0 & 0 \\ 
1 & 1 & 1 & \mathbb{1} & \mathbb{1} & \mathbb{1} & \mathbb{1} & 0 & 0 & 0 & 0 & 0 & 0 & 0 & 0 & 0 \\
\mathbb{1} & \mathbb{1} & \mathbb{1} & 0 & 0 & 0 & 0 & 0 & 0 & 0 & 0 & 0 & 0 & 0 & 0 & 0  
\end{array} \right)
~~
\Lambda = \left(\begin{array}{cccc} \nicefrac{3}{4} & \nicefrac{1}{2} & \nicefrac{1}{4} & 0 \\ \nicefrac{3}{4} & \nicefrac{1}{2} & \nicefrac{1}{4} & 0 \\ \nicefrac{1}{4} & \nicefrac{1}{4} & \nicefrac{1}{4} & 0 \end{array}\right) ~~ \delta = \nicefrac{1}{4}.
$$

$$\left( \begin{array}{cccc|cccc|cccc|cccc}
1 & 1 & 1 & 1 & 1 & 1 & 1 & 1 & 1 & 1 & 1 & \mathbb{1} & 1 & \mathbb{1} & \mathbb{1} & 0 \\ 
1 & 1 & 1 & \mathbb{1} & 1 & \mathbb{1} & \mathbb{1} & 0 & \mathbb{1} & 0 & 0 & 0 & 0 & 0 & 0 & 0 \\
1 & 1 & 1 & \mathbb{1} & 1 & \mathbb{1} & \mathbb{1} & 0 & \mathbb{1} & 0 & 0 & 0 & 0 & 0 & 0 & 0 \\
1 & \mathbb{1} & \mathbb{1} & 0 & \mathbb{1} & 0 & 0 & 0 & 0 & 0 & 0 & 0 & 0 & 0 & 0 & 0  
\end{array} \right)
~~
\Lambda = \left(\begin{array}{cccc} \nicefrac{3}{4} & \nicefrac{1}{2} & \nicefrac{1}{4} & 0 \\ \nicefrac{3}{4} & \nicefrac{1}{4} & \nicefrac{1}{4} & 0 \\ \nicefrac{1}{2} & \nicefrac{1}{4} & \nicefrac{1}{4} & 0 \end{array}\right) ~~ \delta = \nicefrac{1}{4}.
$$

$$\left( \begin{array}{cccc|cccc|cccc|cccc}
1 & 1 & 1 & \mathbb{1} & 1 & 1 & 1 & \mathbb{1} & 1 & 1 & \mathbb{1} & 0 & \mathbb{1} & \mathbb{1} & 0 & 0 \\ 
1 & 1 & 1 & \mathbb{1} & 1 & 1 & 1 & \mathbb{1} & 1 & 1 & \mathbb{1} & 0 & \mathbb{1} & \mathbb{1} & 0 & 0 \\ 
1 & 1 & \mathbb{1} & 0 & 1 & 1 & \mathbb{1} & 0 & \mathbb{1} & \mathbb{1} & 0 & 0 & 0 & 0 & 0 & 0 \\ 
\mathbb{1} & \mathbb{1} & 0 & 0 & \mathbb{1} & \mathbb{1} & 0 & 0 & 0 & 0 & 0 & 0 & 0 & 0 & 0 & 0  
\end{array} \right)
~~
\Lambda = \left(\begin{array}{cccc} \nicefrac{1}{2} & \nicefrac{1}{2} & \nicefrac{1}{4} & 0 \\ \nicefrac{1}{2} & \nicefrac{1}{2} & \nicefrac{1}{4} & 0 \\ \nicefrac{1}{2} & \nicefrac{1}{2} & \nicefrac{1}{4} & 0 \end{array}\right) ~~ \delta = \nicefrac{1}{4}.
$$

$$\left( \begin{array}{cccc|cccc|cccc|cccc}
1 & 1 & 1 & 1 & 1 & 1 & 1 & 1 & 1 & 1 & 1 & \mathbb{1} & \mathbb{1} & \mathbb{1} & 0 & 0 \\ 
1 & 1 & 1 & \mathbb{1} & 1 & 1 & \mathbb{1} & 0 & \mathbb{1} & \mathbb{1} & 0 & 0 & 0 & 0 & 0 & 0 \\ 
1 & 1 & 1 & \mathbb{1} & 1 & 1 & \mathbb{1} & 0 & \mathbb{1} & \mathbb{1} & 0 & 0 & 0 & 0 & 0 & 0 \\ 
1 & 1 & \mathbb{1} & 0 & \mathbb{1} & \mathbb{1} & 0 & 0 & 0 & 0 & 0 & 0 & 0 & 0 & 0 & 0  
\end{array} \right)
~~
\Lambda = \left(\begin{array}{cccc} \nicefrac{4}{5} & \nicefrac{3}{5} & \nicefrac{2}{5} & 0 \\ \nicefrac{3}{5} & \nicefrac{1}{5} & \nicefrac{1}{5} & 0 \\ \nicefrac{2}{5} & \nicefrac{2}{5} & \nicefrac{1}{5} & 0 \end{array}\right) ~~ \delta = \nicefrac{1}{5}.
$$

$$\left( \begin{array}{cccc|cccc|cccc|cccc}
1 & 1 & 1 & 1 & 1 & 1 & 1 & 1 & 1 & 1 & 1 & \mathbb{1} & 1 & \mathbb{1} & \mathbb{1} & 0 \\ 
1 & 1 & 1 & 1 & 1 & 1 & 1 & \mathbb{1} & \mathbb{1} & 0 & 0 & 0 & 0 & 0 & 0 & 0 \\ 
1 & 1 & 1 & \mathbb{1} & 1 & \mathbb{1} & \mathbb{1} & 0 & 0 & 0 & 0 & 0 & 0 & 0 & 0 & 0 \\ 
1 & \mathbb{1} & \mathbb{1} & 0 & \mathbb{1} & 0 & 0 & 0 & 0 & 0 & 0 & 0 & 0 & 0 & 0 & 0  
\end{array} \right)
~~
\Lambda = \left(\begin{array}{cccc} \nicefrac{4}{5} & \nicefrac{3}{5} & \nicefrac{1}{5} & 0 \\ \nicefrac{4}{5} & \nicefrac{2}{5} & \nicefrac{1}{5} & 0 \\ \nicefrac{2}{5} & \nicefrac{1}{5} & \nicefrac{1}{5} & 0 \end{array}\right) ~~ \delta = \nicefrac{1}{5}.
$$

$$\left( \begin{array}{cccc|cccc|cccc|cccc}
1 & 1 & 1 & 1 & 1 & 1 & 1 & 1 & 1 & 1 & 1 & \mathbb{1} & 1 & \mathbb{1} & \mathbb{1} & 0 \\ 
1 & 1 & 1 & 1 & 1 & \mathbb{1} & \mathbb{1} & 0 & 1 & 0 & 0 & 0 & \mathbb{1} & 0 & 0 & 0 \\ 
1 & 1 & 1 & \mathbb{1} & 1 & 0 & 0 & 0 & \mathbb{1} & 0 & 0 & 0 & 0 & 0 & 0 & 0 \\ 
1 & \mathbb{1} & \mathbb{1} & 0 & \mathbb{1} & 0 & 0 & 0 & 0 & 0 & 0 & 0 & 0 & 0 & 0 & 0  
\end{array} \right)
~~
\Lambda = \left(\begin{array}{cccc} \nicefrac{4}{5} & \nicefrac{2}{5} & \nicefrac{1}{5} & 0 \\ \nicefrac{4}{5} & \nicefrac{2}{5} & \nicefrac{1}{5} & 0 \\ \nicefrac{3}{5} & \nicefrac{1}{5} & \nicefrac{1}{5} & 0 \end{array}\right) ~~ \delta = \nicefrac{1}{5}.
$$

$$\left( \begin{array}{cccc|cccc|cccc|cccc}
1 & 1 & 1 & 1 & 1 & 1 & 1 & 1 & 1 & 1 & 1 & \mathbb{1} & \mathbb{1} & \mathbb{1} & 0 & 0 \\ 
1 & 1 & 1 & \mathbb{1} & 1 & 1 & 1 & \mathbb{1} & 1 & 1 & \mathbb{1} & 0 & 0 & 0 & 0 & 0 \\ 
1 & 1 & \mathbb{1} & 0 & 1 & 1 & \mathbb{1} & 0 & \mathbb{1} & \mathbb{1} & 0 & 0 & 0 & 0 & 0 & 0 \\ 
\mathbb{1} & \mathbb{1} & 0 & 0 & \mathbb{1} & \mathbb{1} & 0 & 0 & 0 & 0 & 0 & 0 & 0 & 0 & 0 & 0  
\end{array} \right)
~~
\Lambda = \left(\begin{array}{cccc} \nicefrac{3}{5} & \nicefrac{3}{5} & \nicefrac{2}{5} & 0 \\ \nicefrac{3}{5} & \nicefrac{2}{5} & \nicefrac{1}{5} & 0 \\ \nicefrac{2}{5} & \nicefrac{2}{5} & \nicefrac{1}{5} & 0 \end{array}\right) ~~ \delta = \nicefrac{1}{5}.
$$

$$\left( \begin{array}{cccc|cccc|cccc|cccc}
1 & 1 & 1 & 1 & 1 & 1 & 1 & \mathbb{1} & 1 & 1 & 1 & \mathbb{1} & 1 & \mathbb{1} & 0 & 0 \\ 
1 & 1 & 1 & \mathbb{1} & 1 & 1 & \mathbb{1} & 0 & 1 & 1 & \mathbb{1} & 0 & \mathbb{1} & 0 & 0 & 0 \\ 
1 & 1 & \mathbb{1} & 0 & 1 & \mathbb{1} & 0 & 0 & 1 & \mathbb{1} & 0 & 0 & 0 & 0 & 0 & 0 \\ 
1 & \mathbb{1} & 0 & 0 & \mathbb{1} & 0 & 0 & 0 & \mathbb{1} & 0 & 0 & 0 & 0 & 0 & 0 & 0  
\end{array} \right)
~~
\Lambda = \left(\begin{array}{cccc} \nicefrac{3}{5} & \nicefrac{2}{5} & \nicefrac{2}{5} & 0 \\ \nicefrac{3}{5} & \nicefrac{2}{5} & \nicefrac{1}{5} & 0 \\ \nicefrac{3}{5} & \nicefrac{2}{5} & \nicefrac{1}{5} & 0 \end{array}\right) ~~ \delta = \nicefrac{1}{5}.
$$

$$\left( \begin{array}{cccc|cccc|cccc|cccc}
1 & 1 & 1 & 1 & 1 & 1 & 1 & 1 & 1 & 1 & 1 & \mathbb{1} & \mathbb{1} & 0 & 0 & 0 \\ 
1 & 1 & 1 & 1 & 1 & 1 & 1 & \mathbb{1} & 1 & \mathbb{1} & \mathbb{1} & 0 & 0 & 0 & 0 & 0 \\ 
1 & 1 & 1 & \mathbb{1} & 1 & \mathbb{1} & \mathbb{1} & 0 & 1 & 0 & 0 & 0 & 0 & 0 & 0 & 0 \\ 
1 & \mathbb{1} & \mathbb{1} & 0 & 1 & 0 & 0 & 0 & \mathbb{1} & 0 & 0 & 0 & 0 & 0 & 0 & 0  
\end{array} \right)
~~
\Lambda = \left(\begin{array}{cccc} \nicefrac{5}{6} & \nicefrac{2}{3} & \nicefrac{1}{2} & 0 \\ \nicefrac{1}{2} & \nicefrac{1}{3} & \nicefrac{1}{6} & 0 \\ \nicefrac{1}{2} & \nicefrac{1}{6} & \nicefrac{1}{6} & 0 \end{array}\right) ~~ \delta = \nicefrac{1}{6}.
$$

$$\left( \begin{array}{cccc|cccc|cccc|cccc}
1 & 1 & 1 & 1 & 1 & 1 & 1 & 1 & 1 & 1 & 1 & \mathbb{1} & \mathbb{1} & \mathbb{1} & 0 & 0 \\ 
1 & 1 & 1 & 1 & 1 & 1 & 1 & \mathbb{1} & \mathbb{1} & \mathbb{1} & 0 & 0 & 0 & 0 & 0 & 0 \\ 
1 & 1 & 1 & \mathbb{1} & 1 & 1 & \mathbb{1} & 0 & 0 & 0 & 0 & 0 & 0 & 0 & 0 & 0 \\ 
1 & 1 & \mathbb{1} & 0 & \mathbb{1} & \mathbb{1} & 0 & 0 & 0 & 0 & 0 & 0 & 0 & 0 & 0 & 0  
\end{array} \right)
~~
\Lambda = \left(\begin{array}{cccc} \nicefrac{5}{6} & \nicefrac{2}{3} & \nicefrac{1}{3} & 0 \\ \nicefrac{2}{3} & \nicefrac{1}{3} & \nicefrac{1}{6} & 0 \\ \nicefrac{1}{3} & \nicefrac{1}{3} & \nicefrac{1}{6} & 0 \end{array}\right) ~~ \delta = \nicefrac{1}{6}.
$$

$$\left( \begin{array}{cccc|cccc|cccc|cccc}
1 & 1 & 1 & 1 & 1 & 1 & 1 & 1 & 1 & 1 & \mathbb{1} & \mathbb{1} & 1 & \mathbb{1} & 0 & 0 \\ 
1 & 1 & 1 & 1 & 1 & 1 & \mathbb{1} & \mathbb{1} & 1 & 1 & 0 & 0 & \mathbb{1} & 0 & 0 & 0 \\ 
1 & 1 & \mathbb{1} & \mathbb{1} & 1 & 1 & 0 & 0 & 1 &\mathbb{1} & 0 & 0 & 0 & 0 & 0 & 0 \\ 
1 & \mathbb{1} & 0 & 0 & \mathbb{1} & 0 & 0 & 0 & 0 & 0 & 0 & 0 & 0 & 0 & 0 & 0  
\end{array} \right)
~~
\Lambda = \left(\begin{array}{cccc} \nicefrac{2}{3} & \nicefrac{1}{2} & \nicefrac{1}{3} & 0 \\ \nicefrac{2}{3} & \nicefrac{1}{2} & \nicefrac{1}{3} & 0 \\ \nicefrac{1}{2} & \nicefrac{1}{3} & 0 & 0 \end{array}\right) ~~ \delta = \nicefrac{1}{6}.
$$

$$\left( \begin{array}{cccc|cccc|cccc|cccc}
1 & 1 & 1 & 1 & 1 & 1 & 1 & 1 & 1 & 1 & 1 & \mathbb{1} & \mathbb{1} & \mathbb{1} & 0 & 0 \\ 
1 & 1 & 1 & 1 & 1 & 1 & 1 & \mathbb{1} & 1 & 1 & \mathbb{1} & 0 & 0 & 0 & 0 & 0 \\ 
1 & 1 & 1 & \mathbb{1} & 1 & 1 & \mathbb{1} & 0 & \mathbb{1} &\mathbb{1} & 0 & 0 & 0 & 0 & 0 & 0 \\ 
\mathbb{1} & \mathbb{1} & 0 & 0 & 0 & 0 & 0 & 0 & 0 & 0 & 0 & 0 & 0 & 0 & 0 & 0  
\end{array} \right)
~~
\Lambda = \left(\begin{array}{cccc} \nicefrac{2}{3} & \nicefrac{1}{2} & \nicefrac{1}{3} & 0 \\ \nicefrac{2}{3} & \nicefrac{1}{2} & \nicefrac{1}{3} & 0 \\ \nicefrac{1}{3} & \nicefrac{1}{3} & \nicefrac{1}{6} & 0 \end{array}\right) ~~ \delta = \nicefrac{1}{6}.
$$

$$\left( \begin{array}{cccc|cccc|cccc|cccc}
1 & 1 & 1 & 1 & 1 & 1 & 1 & 1 & 1 & 1 & 1 & \mathbb{1} & 1 & \mathbb{1} & \mathbb{1} & 0 \\ 
1 & 1 & 1 & \mathbb{1} & 1 & 1 & 1 & 0 & 1 & \mathbb{1} & \mathbb{1} & 0 & 0 & 0 & 0 & 0 \\ 
1 & 1 & 1 & 0 & 1 & \mathbb{1} & \mathbb{1} & 0 & \mathbb{1} & 0 & 0 & 0 & 0 & 0 & 0 & 0 \\ 
1 & \mathbb{1} & \mathbb{1} & 0 & \mathbb{1} & 0 & 0 & 0 & 0 & 0 & 0 & 0 & 0 & 0 & 0 & 0  
\end{array} \right)
~~
\Lambda = \left(\begin{array}{cccc} \nicefrac{2}{3} & \nicefrac{1}{2} & \nicefrac{1}{3} & 0 \\ \nicefrac{2}{3} & \nicefrac{1}{3} & \nicefrac{1}{6} & 0 \\ \nicefrac{1}{2} & \nicefrac{1}{3} & \nicefrac{1}{3} & 0 \end{array}\right) ~~ \delta = \nicefrac{1}{6}.
$$

$$\left( \begin{array}{cccc|cccc|cccc|cccc}
1 & 1 & 1 & 1 & 1 & 1 & 1 & 1 & 1 & 1 & 1 & \mathbb{1} & \mathbb{1} & \mathbb{1} & 0 & 0 \\ 
1 & 1 & 1 & \mathbb{1} & 1 & 1 & 1 & 0 & 1 & 1 & \mathbb{1} & 0 & 0 & 0 & 0 & 0 \\ 
1 & 1 & 1 & 0 & 1 & 1 & \mathbb{1} & 0 & \mathbb{1} & \mathbb{1} & 0 & 0 & 0 & 0 & 0 & 0 \\ 
1 & 1 & \mathbb{1} & 0 & \mathbb{1} & \mathbb{1} & 0 & 0 & 0 & 0 & 0 & 0 & 0 & 0 & 0 & 0  
\end{array} \right)
~~
\Lambda = \left(\begin{array}{cccc} \nicefrac{5}{7} & \nicefrac{4}{7} & \nicefrac{3}{7} & 0 \\ \nicefrac{4}{7} & \nicefrac{2}{7} & \nicefrac{1}{7} & 0 \\ \nicefrac{3}{7} & \nicefrac{3}{7} & \nicefrac{2}{7} & 0 \end{array}\right) ~~ \delta = \nicefrac{1}{7}.
$$

$$\left( \begin{array}{cccc|cccc|cccc|cccc}
1 & 1 & 1 & 1 & 1 & 1 & 1 & \mathbb{1} & 1 & 1 & \mathbb{1} & 0 & 1 & \mathbb{1} & 0 & 0 \\ 
1 & 1 & 1 & 1 & 1 & 1 & 1 & 0 & 1 & \mathbb{1} & 0 & 0 & \mathbb{1} & 0 & 0 & 0 \\ 
1 & 1 & 1 & \mathbb{1} & 1 & 1 & \mathbb{1} & 0 & \mathbb{1} & 0 & 0 & 0 & 0 & 0 & 0 & 0 \\ 
1 & 1 & \mathbb{1} & 0 & \mathbb{1} & 0 & 0 & 0 & 0 & 0 & 0 & 0 & 0 & 0 & 0 & 0  
\end{array} \right)
~~
\Lambda = \left(\begin{array}{cccc} \nicefrac{5}{7} & \nicefrac{3}{7} & \nicefrac{1}{7} & 0 \\ \nicefrac{4}{7} & \nicefrac{3}{7} & \nicefrac{2}{7} & 0 \\ \nicefrac{4}{7} & \nicefrac{3}{7} & \nicefrac{2}{7} & 0 \end{array}\right) ~~ \delta = \nicefrac{1}{7}.
$$

$$\left( \begin{array}{cccc|cccc|cccc|cccc}
1 & 1 & 1 & 1 & 1 & 1 & 1 & 1 & 1 & 1 & 1 & \mathbb{1} & \mathbb{1} & 0 & 0 & 0 \\ 
1 & 1 & 1 & 1 & 1 & 1 & 1 & \mathbb{1} & 1 & \mathbb{1} & 0 & 0 & 0 & 0 & 0 & 0 \\ 
1 & 1 & 1 & \mathbb{1} & 1 & 1 & \mathbb{1} & 0 & 1 & 0 & 0 & 0 & 0 & 0 & 0 & 0 \\ 
1 & 1 & \mathbb{1} & 0 & 1 & \mathbb{1} & 0 & 0 & \mathbb{1} & 0 & 0 & 0 & 0 & 0 & 0 & 0  
\end{array} \right)
~~
\Lambda = \left(\begin{array}{cccc} \nicefrac{7}{8} & \nicefrac{3}{4} & \nicefrac{1}{2} & 0 \\ \nicefrac{1}{2} & \nicefrac{1}{4} & \nicefrac{1}{8} & 0 \\ \nicefrac{1}{2} & \nicefrac{1}{4} & \nicefrac{1}{8} & 0 \end{array}\right) ~~ \delta = \nicefrac{1}{8}.
$$

$$\left( \begin{array}{cccc|cccc|cccc|cccc}
1 & 1 & 1 & 1 & 1 & 1 & 1 & 1 & 1 & 1 & 1 & \mathbb{1} & \mathbb{1} & 0 & 0 & 0 \\ 
1 & 1 & 1 & 1 & 1 & 1 & 1 & \mathbb{1} & 1 & 1 & \mathbb{1} & 0 & 0 & 0 & 0 & 0 \\ 
1 & 1 & 1 & \mathbb{1} & 1 & 1 & \mathbb{1} & 0 & 1 & \mathbb{1} & 0 & 0 & 0 & 0 & 0 & 0 \\ 
1 & \mathbb{1} & 0 & 0 & 1 & 0 & 0 & 0 & \mathbb{1} & 0 & 0 & 0 & 0 & 0 & 0 & 0  
\end{array} \right)
~~
\Lambda = \left(\begin{array}{cccc} \nicefrac{3}{4} & \nicefrac{5}{8} & \nicefrac{1}{2} & 0 \\ \nicefrac{1}{2} & \nicefrac{3}{8} & \nicefrac{1}{4} & 0 \\ \nicefrac{1}{2} & \nicefrac{1}{4} & \nicefrac{1}{8} & 0 \end{array}\right) ~~ \delta = \nicefrac{1}{8}.
$$

$$\left( \begin{array}{cccc|cccc|cccc|cccc}
1 & 1 & 1 & 1 & 1 & 1 & 1 & 1 & 1 & 1 & \mathbb{1} & 0 & 1 & \mathbb{1} & 0 & 0 \\ 
1 & 1 & 1 & 1 & 1 & 1 & 1 & \mathbb{1} & 1 & \mathbb{1} & 0 & 0 & \mathbb{1} & 0 & 0 & 0 \\ 
1 & 1 & 1 & \mathbb{1} & 1 & 1 & \mathbb{1} & 0 & 0 & 0 & 0 & 0 & 0 & 0 & 0 & 0 \\ 
1 & 1 & \mathbb{1} & 0 & \mathbb{1} & 0 & 0 & 0 & 0 & 0 & 0 & 0 & 0 & 0 & 0 & 0  
\end{array} \right)
~~
\Lambda = \left(\begin{array}{cccc} \nicefrac{3}{4} & \nicefrac{1}{2} & \nicefrac{1}{8} & 0 \\ \nicefrac{5}{8} & \nicefrac{1}{2} & \nicefrac{1}{4} & 0 \\ \nicefrac{1}{2} & \nicefrac{3}{8} & \nicefrac{1}{4} & 0 \end{array}\right) ~~ \delta = \nicefrac{1}{8}.
$$

\begin{bibdiv}
    \begin{biblist}[\normalsize]
    \bibselect{biblio}
    \end{biblist}
    \end{bibdiv}

\end{document}